\documentclass[11pt,a4paper]{article}
\usepackage{amsmath}
\usepackage{a4wide}
\usepackage{mathrsfs}
\usepackage{stmaryrd}
\usepackage{pifont}
\usepackage{amssymb}
\usepackage{color}
\usepackage{amsthm}
\usepackage{graphicx}
\usepackage[colorlinks=true]{hyperref}
\hypersetup{linkcolor=blue, urlcolor=red, citecolor=red}
\allowdisplaybreaks[4] \numberwithin{equation}{section}
\newtheorem{theorem}{Theorem}[section]

\newtheorem{lemma}{Lemma}[section]
\newtheorem{proposition}{Proposition}[section]
\newtheorem{remark}{Remark}[section]

\newtheorem{corollary}[theorem]{Corollary}
 \input amssym.def
\input amssym
\title{Approximation of time optimal controls for heat equations with perturbations in the system potential}
\author{Huaiqiang Yu\thanks{School of Mathematics and Statistics, Wuhan University,
    Wuhan, 430072, P. R. China. Email: huaiqiangyu@whu.edu.cn}}
\date{}
\begin{document}
\maketitle
\begin{abstract}
    In this paper, we study a certain approximation property for  a time optimal control problem of the heat equation with $L^\infty$-potential. We prove that the optimal time and the optimal control to the  same time optimal control problem for the heat equation,  where the potential has a small perturbation, are close to those for the original problem. We also verify that for the heat equation with a small perturbation in the potential, one can construct a new time optimal control problem, which has the same target as  that of the original problem,  but has a different control constraint bound from that of the original problem, such that the new and the original  problems share the same optimal time, and meanwhile the optimal control of the new problem is close to that of the original one.
     The main idea to approach such approximation is an appropriate  use of an equivalence theorem
    of minimal norm and minimal time control problems for the heat equations under consideration.
    This theorem was first established by G.Wang and E. Zuazua in \cite{b4} for the case where the controlled system is an internally controlled heat equation without the potential and the target is  the origin of the state space.
\vskip 8pt
    \noindent{\bf Keywords.} time optimal control, heat equation,
    perturbation of potential, $L^\infty$-convergence
\vskip 8pt
    \noindent{\bf 2010 AMS Subject Classifications.} 93C73, 93C20

\end{abstract}
\section{Introduction}
    Let $\Omega\subset\mathbb{R}^N$ be a bounded domain with a
    smooth boundary $\partial\Omega$ and $\omega$ be an open and
    nonempty subset of $\Omega$. Denote by $\chi_\omega$ the
    characteristic function of the set $\omega$. Write
    $\mathbb{R}^+=(0,+\infty)$.
Consider the following controlled heat
    equations:
\begin{equation}\label{equationyu2.01}
\begin{cases}
     y_t-\triangle y-ay=\chi_\omega
    u&\mbox{in}\;\;\Omega\times\mathbb{R}^+,\\
    y=0&\mbox{on}\;\;\partial\Omega\times \mathbb{R}^+,\\
    y(0)=y_0&\mbox{in}\;\;\Omega
\end{cases}
\end{equation}
    and
\begin{equation}\label{equationyu2.02}
\begin{cases}
    y^\varepsilon_t-\triangle y^\varepsilon-a_\varepsilon y^\varepsilon=\chi_\omega
    u&\mbox{in}\;\;\Omega\times\mathbb{R}^+,\\
    y^\varepsilon=0&\mbox{on}\;\;\partial\Omega\times \mathbb{R}^+,\\
    y^\varepsilon(0)=y_0&\mbox{in}\;\;\Omega,
\end{cases}
\end{equation}
    where $y_0$ is in $L^2(\Omega)$, $u$ is a control taken from
    the space $L^\infty(\mathbb{R}^+;L^2(\Omega))$, $a$ and $ a_\varepsilon$, with $\varepsilon>0$ small,  belong to $L^\infty(\Omega)$. Here, we assume

   \vskip 5pt

   \noindent  $(H_1)$   $\|a_\varepsilon-a\|_{L^\infty(\Omega)}\to
    0$ as $\varepsilon\to 0^+$.

    \vskip 5pt

\vskip 5pt

    \noindent  $(H_2)$    $y_0\in L^2(\Omega)$  such that
    $y_0\notin \overline{B_K(0)}$, where $\overline{B_K(0)}$ is the closed ball in $L^2(\Omega)$, centered at the origin and of radius $K>0$.

    \vskip 5pt

    \noindent  $(H_3)$  Either  $\|a\|_{L^\infty(\Omega)}<\lambda_1$
    or $a(x)\leq0$ for any
    $x\in\Omega$,
    where $\lambda_1>0$ is the first eigenvalue to the operator
    $-\triangle$ with the domain  $D(\triangle)=H_0^1(\Omega)\cap H^2(\Omega)$.

    \vskip 5pt

\noindent
    Corresponding to each $u$ and  $y_0$,  the equations
    (\ref{equationyu2.01}) and (\ref{equationyu2.02}) have unique solutions which will be treated as functions of time variable $t$, from $[0,+\infty)$
    to the space $L^2(\Omega)$ and denoted by
       $y(\cdot;u,y_0)$ and $y^{\varepsilon}(\cdot;u,y_0)$ respectively.
    One can easily check that, under the assumption $(H_1)$,
\begin{equation}\label{yu2.03}
    \|y^{\varepsilon}(\cdot;u,y_0)-y(\cdot;u,y_0)\|_{C([0,T];L^2(\Omega))}\to
    0\;\;\mbox{as}\;\;\varepsilon\to 0^+,
\end{equation}
    when $T>0$, $y_0\in L^2(\Omega)$ and $u\in
    L^\infty(0,T;L^2(\Omega))$.

\par
    We start with introducing some notations which will be used in this paper frequently.
     Denote by $\|\cdot\|_\Omega$ and $\langle\cdot,\cdot\rangle_\Omega$ the
    usual norm and inner product of  the space $L^2(\Omega)$ respectively. Write accordingly
    $\|\cdot\|_\omega$ and $\langle\cdot,\cdot\rangle_\omega$
    for the norm and inner product of the space $L^2(\omega)$. Use
    $\overline{B_r(0)}$ to denote the closed ball in $L^2(\Omega)$,
    centered at zero point and of radius $r>0$. When $X$ is a Banach space, $\|\cdot\|_X$ stands for the norm of $X$ and
    $\|\cdot\|$ denotes the standard operator norm over $\mathcal{L}(X)$ which is the space of all linear and bounded operators on $X$.

    Next, we fix two positive numbers $K$ and $M$,  choose the target set $ \overline{B_K(0)}$ in $L^2(\Omega)$ and
    define two constraint sets of controls as follows:

    \vskip 5pt

\noindent  $\mathcal{U}_{M}\equiv\{u\in L^\infty(\mathbb{R}^+;L^2(\Omega)):
    u(\cdot)\in
    \overline{B_M(0)}\;\mbox{over}\;\mathbb{R}^+\;\mbox{and}\;
    \exists\;t>0\;\mbox{s.t.}\; y(t;u,y_0)\in \overline{B_K(0)}\};$

    \vskip 5pt

\noindent
   $\mathcal{U}^{\varepsilon}_{M}\equiv\{u\in
    L^\infty(\mathbb{R}^+;L^2(\Omega)):
    u(\cdot)\in
    \overline{B_M(0)}\;\mbox{over}\;\mathbb{R}^+\;\mbox{and}\;
    \exists \; t>0\;\mbox{s.t.}\; y^{\varepsilon}(t;u,y_0)\in
    \overline{B_K(0)}\}.$

    \vskip 5pt

   \noindent  Now, we set up  the following two time optimal control problems:

    \vskip 5pt

   \noindent   $(TP)\;\;\;\; T^* \equiv \inf_{u\in\mathcal{U}_{M}}\{
    t\in\mathbb{R}^+:y(t;u,y_0)\in\overline{B_K(0)}\};$

    \vskip 5pt

   \noindent  $(TP^\varepsilon_1)\;\;\;\; T^{*,1}_\varepsilon
    \equiv\inf_{u\in\mathcal{U}^{\varepsilon}_{M}}\{
    t\in\mathbb{R}^+:y^\varepsilon(t;u,y_0)\in\overline{B_K(0)}\}.$

    \vskip 5pt

\noindent The numbers  $T^*$ and $T^{*,1}_\varepsilon$
    are called the optimal time
    for the problems $(TP)$  and $(TP^\varepsilon_1)$ respectively.  A control $u^*\in \mathcal{U}_{M}$ is called an optimal control to
    $(TP)$ if $y(T^*; u^*,y_0)\in \overline{B_K(0)}$ and $u^*(\cdot)=0$ over $(T^*, +\infty)$. An optimal control $u^{*,1}_{\varepsilon}$
    to $(TP^\varepsilon_1)$ is defined in a similar way.

    \par

   The first purpose of this paper is to study the convergence of the problem $( TP^\varepsilon_1)$ to the problem  $(TP)$ as $\varepsilon$ tends to zero. The results are included in the following theorem:

\begin{theorem}\label{ithyu1.1}
    Suppose that $(H_1)$, $(H_2)$ and $(H_3)$ hold. Let $T^*$, $u^*$ and  $T^{*,1}_{\varepsilon}$, $u^{*,1}_\varepsilon$  be  the optimal time and the optimal controls  to
    Problems $(TP)$ and $(TP^\varepsilon_1)$ respectively.
    Then

   \noindent  $(i)$ $T^{*,1}_\varepsilon\to T^*$ as $\varepsilon\to 0^+$;

   \noindent  $(ii)$ $u^{*,1}_\varepsilon\to u^*$ strongly in
    $L^2((0,T^*)\times\Omega)$ as $\varepsilon\to 0^+$;

    \noindent $(iii)$ for any $\eta\in(0,T^*)$, $u^{*,1}_\varepsilon\to u^*$
    strongly in $L^\infty(0,T^*-\eta;L^2(\Omega))$ as
    $\varepsilon\to 0^+$.
\end{theorem}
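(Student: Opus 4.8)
The plan is to prove the three assertions in order, using \eqref{yu2.03}, the dissipativity encoded in $(H_3)$, and the bang-bang representation of time optimal controls supplied by the equivalence theorem together with Pontryagin's maximum principle. For $(i)$ I would argue by two-sided estimates. For the upper bound $\limsup_{\varepsilon\to0^+}T^{*,1}_\varepsilon\le T^*$, the key point is that $(H_3)$ forces the free evolution to strictly decrease the $L^2$ norm: writing $A=\triangle+a$, one has $\frac{d}{dt}\|y\|_\Omega^2=2\langle Ay,y\rangle_\Omega\le-2(\lambda_1-\|a\|_{L^\infty(\Omega)})\|y\|_\Omega^2<0$ in the first case of $(H_3)$ and $\le-2\lambda_1\|y\|_\Omega^2<0$ in the second, for every $y\neq0$ (by the Poincar\'e inequality $\|\nabla y\|_\Omega^2\ge\lambda_1\|y\|_\Omega^2$). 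Since $\|y(T^*;u^*,y_0)\|_\Omega=K>0$, continuing with the zero control on $(T^*,T^*+\delta)$ yields $\|y(T^*+\delta;u^*,y_0)\|_\Omega<K$; then \eqref{yu2.03} gives $\|y^\varepsilon(T^*+\delta;u^*,y_0)\|_\Omega<K$ for all small $\varepsilon$, so $u^*\in\mathcal{U}^\varepsilon_M$ and $T^{*,1}_\varepsilon\le T^*+\delta$. Letting $\varepsilon\to0^+$ and then $\delta\to0^+$ gives the bound. For the lower bound $\liminf_{\varepsilon\to0^+}T^{*,1}_\varepsilon\ge T^*$, I would suppose a subsequence with $T^{*,1}_\varepsilon\to\tau<T^*$, extend each $u^{*,1}_\varepsilon$ by zero, and extract a weak limit $\bar u$ in $L^2((0,T^*)\times\Omega)$ (the pointwise bound $M$ gives weak compactness). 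Since the control-to-state map $u\mapsto\int_0^t e^{(t-s)A}\chi_\omega u(s)\,ds$ is compact and \eqref{yu2.03} controls the potential perturbation uniformly for $\|u\|_\Omega\le M$, I pass to the limit in $y^\varepsilon(T^{*,1}_\varepsilon;u^{*,1}_\varepsilon,y_0)\in\overline{B_K(0)}$ to get $y(\tau;\bar u,y_0)\in\overline{B_K(0)}$ with $\bar u\in\mathcal{U}_M$, contradicting the optimality of $T^*$.

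For $(ii)$ I would first upgrade this compactness argument: since now $T^{*,1}_\varepsilon\to T^*$, every weak subsequential limit of $(u^{*,1}_\varepsilon)$ is an optimal control of $(TP)$ supported in $(0,T^*)$, hence equals $u^*$ by the uniqueness that the bang-bang property guarantees; therefore $u^{*,1}_\varepsilon\rightharpoonup u^*$ weakly in $L^2((0,T^*)\times\Omega)$. To pass from weak to strong convergence I would use that the bang-bang property yields $\|u^{*,1}_\varepsilon(t)\|_\Omega=M$ for a.e. $t\in(0,T^{*,1}_\varepsilon)$ and $\|u^*(t)\|_\Omega=M$ for a.e. $t\in(0,T^*)$, so that
\[
\|u^{*,1}_\varepsilon\|_{L^2((0,T^*)\times\Omega)}^2=M^2\min\{T^{*,1}_\varepsilon,T^*\}\longrightarrow M^2T^*=\|u^*\|_{L^2((0,T^*)\times\Omega)}^2 .
\]
Weak convergence together with convergence of the norms gives strong convergence in the Hilbert space $L^2((0,T^*)\times\Omega)$.

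For $(iii)$ I would work with the adjoint representation of the bang-bang controls: there are nontrivial solutions $\varphi$, $\varphi_\varepsilon$ of the backward equations $-\partial_t\varphi-\triangle\varphi-a\varphi=0$ and $-\partial_t\varphi_\varepsilon-\triangle\varphi_\varepsilon-a_\varepsilon\varphi_\varepsilon=0$ (with homogeneous Dirichlet data) such that $u^*(t)=M\,\chi_\omega\varphi(t)/\|\chi_\omega\varphi(t)\|_\omega$ and $u^{*,1}_\varepsilon(t)=M\,\chi_\omega\varphi_\varepsilon(t)/\|\chi_\omega\varphi_\varepsilon(t)\|_\omega$. The smoothing of the backward heat semigroup, acting over the time $T^{*,1}_\varepsilon-t\ge\eta/2$ available for $t\le T^*-\eta$ and small $\varepsilon$, turns the (at worst weak) convergence of the terminal data into strong convergence $\varphi_\varepsilon(t)\to\varphi(t)$ in $L^2(\Omega)$, uniformly on $[0,T^*-\eta]$, the potential perturbation being again absorbed by an estimate of the type \eqref{yu2.03} for the adjoint equation. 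It then remains to control the denominators: unique continuation for the heat equation shows $\|\chi_\omega\varphi(t)\|_\omega>0$ for every $t<T^*$, so $c_\eta\equiv\min_{[0,T^*-\eta]}\|\chi_\omega\varphi(t)\|_\omega>0$ by continuity, and the uniform convergence yields $\|\chi_\omega\varphi_\varepsilon(t)\|_\omega\ge c_\eta/2$ on $[0,T^*-\eta]$ for small $\varepsilon$. With numerators converging and denominators bounded below, the normalized controls satisfy $u^{*,1}_\varepsilon\to u^*$ in $L^\infty(0,T^*-\eta;L^2(\Omega))$.

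The hard part, I expect, is $(iii)$, and specifically two linked points: first, identifying and proving the convergence of the terminal data of the adjoint states, which are tied to the moving endpoints $y^\varepsilon(T^{*,1}_\varepsilon;u^{*,1}_\varepsilon,y_0)$ and to the normalization furnished by the maximum principle; and second, converting this into a genuinely uniform-in-time lower bound on $\|\chi_\omega\varphi_\varepsilon(t)\|_\omega$, which is what makes the $L^\infty$ (rather than merely $L^2$) statement work and is where unique continuation and the backward smoothing must be combined quantitatively.
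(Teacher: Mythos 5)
Your arguments for $(i)$ and $(ii)$ are correct and essentially reproduce the paper's route: the two-sided bound on $T^{*,1}_\varepsilon$ via weak-$*$ compactness of the controls plus the strict decay $\|S(t)\|\le e^{-\delta_0 t}$ coming from $(H_3)$ is exactly the content of Lemma \ref{wanglemma31} (Steps 1--4), and the upgrade from weak to strong $L^2$ convergence via the bang-bang identity $\|u^{*,1}_\varepsilon(t)\|_\Omega=M$ and convergence of norms is the paper's proof of part $(ii)$ verbatim.

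The genuine gap is in $(iii)$, and it is precisely the point you flag as ``the hard part'' without resolving it: the convergence of the terminal data of the adjoint states. Pontryagin's maximum principle produces for each $\varepsilon$ a nontrivial adjoint solution only up to a positive scalar multiple, so ``the terminal data'' are not canonically defined objects for which one could even pose a compactness question; there is no a priori bound on them, and no mechanism to extract or identify a weak limit. The paper closes this by an entirely variational device: the equivalence theorem (Proposition \ref{lemmayu4.2} together with Proposition \ref{lemmayu4.1} and Corollary \ref{remarkyu-e-1}) identifies $u^{*,1}_\varepsilon$ with the minimal-norm control $f^\varepsilon_{T^{*,1}_\varepsilon}$, whose adjoint datum is the \emph{minimizer} $\hat{\varphi}^\varepsilon_{T^{*,1}_\varepsilon}$ of the concrete functional $J^{T^{*,1}_\varepsilon}_\varepsilon(\cdot)$ in \eqref{102}. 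Only then does one have (a) a uniform bound $\|\hat{\varphi}^\varepsilon_{T^{*,1}_\varepsilon}\|_\Omega\le C$ via $J^{T^{*,1}_\varepsilon}_\varepsilon(\hat{\varphi}^\varepsilon_{T^{*,1}_\varepsilon})\le 0$ combined with the quantitative observability inequality \eqref{117}, and (b) an identification of the weak limit as $\hat{\varphi}_{T^*}$ through a $\Gamma$-convergence-type comparison of functional values, upgraded to strong $L^2$ convergence (Theorem \ref{theoremyu1.1}). This occupies most of Section 2 and is not recoverable from ``smoothing turns weak convergence into strong convergence'': smoothing would indeed suffice \emph{once weak convergence of correctly normalized data is known}, but your proposal supplies neither the normalization nor the compactness. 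The remainder of your $(iii)$ --- the uniform lower bound $C_\eta$ on $\|\varphi(t;\hat{\varphi}_{T^*},T^*)\|_\omega$ via unique continuation and the three-term estimate on the adjoint states over $[0,T^*-\eta]$ --- matches \eqref{yu4.44}--\eqref{yu4.46} and is fine.
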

\par

    It is not hard to show the convergence of the optimal time. However, it is not trivial to prove the above-mentioned $L^\infty$-convergence of the optimal controls. We make use of
    an equivalence theorem of minimal time and minimal norm control problems,  and the convergence of the associated minimization problems (which will be introduced later),  as well as the bang-bang property to reach the aim.  The equivalence theorem (see Proposition \ref{lemmayu4.2}) is a slight modified version of that established in  \cite{b4} (see Theorem 1.1 in \cite{b4}), while the bang-bang property was built up in Theorem 1 of \cite{b10}.   To state the associated minimization problems, we first consider two equations as follows:
\begin{equation}\label{103}
\begin{cases}
    \varphi_t+\triangle\varphi+a\varphi=0&\mbox{in}\;\;\Omega\times(0,T^*),\\
    \varphi=0&\mbox{on}\;\;\partial\Omega\times(0,T^*),\\
    \varphi(T^*)=\varphi_{T^*}\in L^2(\Omega)
\end{cases}
\end{equation}
    and
\begin{equation}\label{104}
\begin{cases}
    \varphi^\varepsilon_t+\triangle\varphi^\varepsilon
    +a_\varepsilon \varphi^\varepsilon=0
    &\mbox{in}\;\;\Omega\times(0,T^{*, 1}_\varepsilon),\\
    \varphi^\varepsilon=0&\mbox{on}\;\;
    \partial\Omega\times(0,T^{*,1}_\varepsilon),\\
    \varphi^\varepsilon(T^{*,1}_\varepsilon)
    =\varphi^\varepsilon_{T^{*,1}_\varepsilon}\in L^2(\Omega).
\end{cases}
\end{equation}
    Write $\varphi(\cdot;\varphi_{T^*},T^*)$ and $\varphi^\varepsilon
    (\cdot;\varphi^\varepsilon_{T^{*,1}_\varepsilon},T^{*,1}_\varepsilon)$ for the solutions of equation (\ref{103}) and equation (\ref{104}) respectively. Then, we set up two functionals over $L^2(\Omega)$ by
\begin{eqnarray}\label{101}
    J^{T^*}(\varphi_{T^*})&=&\frac{1}{2}\left(\int_0^{T^*}\|\varphi(t;\varphi_{T^*},T^*)\|_\omega
     dt\right)^2+\langle
    y_0,\varphi(0;\varphi_{T^*},T^*)\rangle_\Omega\nonumber\\
    &\;&+K\|\varphi_{T^*}\|_\Omega,\;\;
    \varphi_{T^*}\in L^2(\Omega)
\end{eqnarray}
    and
\begin{eqnarray}\label{102}
    J^{T^{*,1}_\varepsilon}_\varepsilon(\varphi^\varepsilon_{T^{*,1}_\varepsilon})
    &=&\frac{1}{2}\left(\int_0^{T^{*,1}_\varepsilon}
    \|\varphi^\varepsilon(t;\varphi^\varepsilon_
    {T^{*,1}_\varepsilon},T^{*,1}_\varepsilon)\|_{\omega}dt\right)^2+\langle
    y_0,\varphi^\varepsilon(0;\varphi_{T^{*,1}_\varepsilon}^\varepsilon,
    T^{*,1}_\varepsilon)\rangle_\Omega\nonumber\\
    &\;&+K\|\varphi^\varepsilon_{T^{*,1}_\varepsilon}\|_\Omega,\;\;
    \varphi^\varepsilon_{T^{*,1}_\varepsilon}\in L^2(\Omega).
\end{eqnarray}
   Now the associated minimization problems are to minimize accordingly  $J^{T^*}(\cdot)$ and $J^{T^{*,1}_\varepsilon}_\varepsilon(\cdot)$ over $L^2(\Omega)$.
   These two minimization problems have unique solutions   $\hat{\varphi}_{T^*}$ and $\hat{\varphi}_{T^{*,1}_\varepsilon}^\varepsilon$  respectively (see Section 4.2 in \cite {b3}).
    With the aid of the above-mentioned equivalence theorem, we can explicitly express the time optimal controls $u^*$   over $[0,T^*)$ and
    $u^{*,1}_\varepsilon$ over  $[0,T^{*,1}_\varepsilon)$ by
\begin{equation}\label{yu0.01}
    u^*(t)=M\frac{\chi_\omega\varphi
    (t;\hat{\varphi}_{T^*},T^*)}{\|\varphi(t;\hat{\varphi}_{T^*},T^*)\|_{\omega}}\;\;\;\;
    \mbox{for all}\;\ t\in[0,T^*),
\end{equation}
\begin{equation}\label{yu0.02}
    u^{*,1}_\varepsilon(t)
    =M\frac{\chi_\omega\varphi^\varepsilon(t;
    \hat{\varphi}^\varepsilon_{T^{*,1}_\varepsilon},T^{*,1}_\varepsilon)}
    {\|\varphi^\varepsilon(t;
    \hat{\varphi}^\varepsilon_{T^{*,1}_\varepsilon},T^{*,1}_\varepsilon)\|_{\omega}}\;\;\;\;
    \mbox{for all}\;\ t\in[0,T^{*,1}_\varepsilon).
\end{equation}
    Under this framework, an independent interesting result obtained in this study, which  plays an important role in the proof of Theorem~\ref{ithyu1.1}, is stated as follows:
    {\it
    The  minimizer of $J^{T^{*,1}_\varepsilon}_\varepsilon(\cdot)$ converges to the minimizer of $J^{T^*}(\cdot)$ strongly
    in $L^2(\Omega)$ as $\varepsilon$ tends to zero (see Theorem \ref{theoremyu1.1})}. This result, as well as (\ref{yu0.01}) and (\ref{yu0.02}), leads  to the $L^\infty$-convergence of the optimal controls
    stated in  Theorem~\ref{ithyu1.1}.
\par
    The second purpose of this paper is to construct a time optimal control problem for the perturbed equation (\ref{equationyu2.02}) such that this new problem has  the same optimal time as
    that of $(TP)$ and the optimal control for the new problem converges to that of  $ (TP)$. More precisely, we define a functional  $J^{T^*}_\varepsilon(\cdot)$ over $L^2(\Omega)$  by
\begin{eqnarray}\label{w-1}
    J^{T^*}_{\varepsilon}(\varphi^\varepsilon_{T^*})&=&\frac{1}{2}\left(\int_0^{T^*}
    \|\varphi^\varepsilon(t;\varphi^\varepsilon_{T^*},T^*)\|_\omega dt\right)^2
    +\langle y_0,\varphi^\varepsilon(0;\varphi_{T^*}^\varepsilon,T^*)\rangle_\Omega\nonumber\\
    &\;&+K\|\varphi^\varepsilon_{T^*}\|_\Omega,\;\varphi^\varepsilon_{T^*}\in L^2(\Omega),
\end{eqnarray}
    where $\varphi^\varepsilon(\cdot; \varphi_{T^*}^\varepsilon,T^*)$ is the solution of
\begin{equation}\label{w-2}
\begin{cases}
    \varphi^\varepsilon_t+\triangle\varphi^\varepsilon
    +a_\varepsilon\varphi^\varepsilon=0
    &\mbox{in}\;\;\Omega\times(0,T^*),\\
    \varphi^\varepsilon=0&\mbox{on}\;\;\partial\Omega\times(0,T^*),\\
    \varphi^\varepsilon(T^*)=\varphi^\varepsilon_{T^*}\in L^2(\Omega).
\end{cases}
\end{equation}
    The functional $J^{T^*}_\varepsilon(\cdot)$ has a unique minimizer  (see Section 4.2 in \cite {b3}), which is denoted by  $\hat{\varphi}_{T^*}^\varepsilon$.
    It is proved that  $\hat{\varphi}^\varepsilon_{T^*}\neq 0$, when $\varepsilon>0$ is small enough (see Step 1 in the proof of Proposition \ref{lemmayu4.1}).
    Let

\begin{equation}\label{yub2.01}
    M_\varepsilon=\int_0^{T^*}\|\varphi^\varepsilon(t; \hat{\varphi}_{T^*}^\varepsilon,T^*)\|_{\omega}dt,
\end{equation}
       and
       \vskip 5pt
     \noindent    $\mathcal{U}^{\varepsilon}_{M_\varepsilon}\equiv\{u\in
    L^\infty(\mathbb{R}^+;L^2(\Omega)):
    u(\cdot)\in
    \overline{B_{M_\varepsilon}(0)}\;\mbox{over}\;\mathbb{R}^+
    \;\mbox{and}\;
    \exists t>0\;\mbox{s.t.}\; y^{\varepsilon}(t;u,y_0)\in
    \overline{B_K(0)}\}.$

    \vskip 5pt

    \noindent Now we define the following time optimal control problem:

    \vskip 5pt

    \noindent
   $(TP^\varepsilon_2)\;\;\;\;
    T^{*,2}_\varepsilon\equiv\inf_{u\in\mathcal{U}^{\varepsilon}_{M_\varepsilon}}\{
    t\in\mathbb{R}^+:y^\varepsilon(t;u,y_0)\in\overline{B_K(0)}\}.$

    \vskip 5pt
    \noindent
    The second main result of this paper can be stated as follows:
\begin{theorem}\label{ithyu1.2}
   Suppose that  $(H_1)$, $(H_2)$ and $(H_3)$ hold.
    Let $T^*$, $u^*$ and $T^{*,2}_\varepsilon$, $u^{*,2}_\varepsilon$  be  the optimal time and the optimal controls  to
    Problems $(TP)$ and $(TP^\varepsilon_2)$ respectively. Then
   there exists an
    $\varepsilon_0>0$ such that

  \noindent   $(i)$ when  $\varepsilon\in(0,\varepsilon_0]$, $T^{*,2}_\varepsilon= T^*$;

  \noindent   $(ii)$ when  $\varepsilon\in(0,\varepsilon_0]$,  it holds that
\begin{equation}\label{yub4.10}
    u^{*,2}_\varepsilon(t)
    =M_\varepsilon
    \frac{\chi_\omega\varphi^\varepsilon(t;\hat{\varphi}^\varepsilon_{T^*},T^*)}
    {\|\varphi^\varepsilon(t;\hat{\varphi}^\varepsilon_{T^*},T^*)\|_{\omega}}\;\;\mbox{for each}\;\;
    t\in[0,T^*);
\end{equation}

  \noindent   $(iii)$ $M_\varepsilon\to M$ as $\varepsilon\to 0^+$;

   \noindent  $(iv)$ $u^{*,2}_\varepsilon\to u^*$ strongly in
    $L^2((0,T^*)\times\Omega)$ as $\varepsilon\to 0^+$;

  \noindent   $(v)$ for any $\eta\in(0,T^*)$, $u^{*,2}_\varepsilon\to u^*$
    strongly in $L^\infty(0,T^*-\eta;L^2(\Omega))$ as
    $\varepsilon\to 0^+$.

  \end{theorem}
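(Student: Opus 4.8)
The plan is to reduce everything to the equivalence theorem (Proposition~\ref{lemmayu4.2}) for the perturbed system, together with a convergence result for the minimizers of the dual functionals $J^{T^*}_\varepsilon$ and $J^{T^*}$. The starting observation is a duality identity: testing the Euler--Lagrange equation for the minimizer $\hat\varphi^\varepsilon_{T^*}$ of $J^{T^*}_\varepsilon$ against $\hat\varphi^\varepsilon_{T^*}$ itself, and exploiting the positive homogeneity of the observation term, one finds that $M_\varepsilon=\int_0^{T^*}\|\varphi^\varepsilon(t;\hat\varphi^\varepsilon_{T^*},T^*)\|_\omega\,dt$ is exactly the \emph{minimal} $L^\infty(0,T^*;L^2(\Omega))$-norm among all controls steering $y_0$ into $\overline{B_K(0)}$ at time $T^*$ through the perturbed equation~(\ref{equationyu2.02}), and that the associated minimal-norm control is precisely the bang-bang field on the right-hand side of~(\ref{yub4.10}). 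The same computation for the unperturbed functional~(\ref{101}), combined with the fact that $T^*$ is the optimal time of $(TP)$ for the bound $M$, yields the companion identity $M=\int_0^{T^*}\|\varphi(t;\hat\varphi_{T^*},T^*)\|_\omega\,dt$, which I will use for part $(iii)$.

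With this identity in hand, parts $(i)$ and $(ii)$ are immediate consequences of the equivalence theorem. Since $\hat\varphi^\varepsilon_{T^*}\neq0$ for $\varepsilon$ small (Step~1 in the proof of Proposition~\ref{lemmayu4.1}) and $y_0\notin\overline{B_K(0)}$ by $(H_2)$, the minimal-norm problem at time $T^*$ for~(\ref{equationyu2.02}) is nondegenerate, with minimal norm $M_\varepsilon>0$. Proposition~\ref{lemmayu4.2}, applied to~(\ref{equationyu2.02}) with control bound $M_\varepsilon$, then asserts that the time optimal problem $(TP^\varepsilon_2)$ has optimal time equal to $T^*$ and that its optimal control coincides with the minimal-norm control; by the bang-bang property of \cite{b10} this optimal control is unique and is given by the expression~(\ref{yub4.10}). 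This proves $(i)$ and $(ii)$, and fixes $\varepsilon_0$ as any threshold below which $\hat\varphi^\varepsilon_{T^*}\neq0$ and the target is not reachable from $y_0$ in time zero.

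The heart of the remaining statements is the strong convergence $\hat\varphi^\varepsilon_{T^*}\to\hat\varphi_{T^*}$ in $L^2(\Omega)$, which is the fixed-time analogue of Theorem~\ref{theoremyu1.1} and is in fact simpler because both functionals share the horizon $T^*$. First I would establish a coercivity estimate for $J^{T^*}_\varepsilon$ that is uniform in small $\varepsilon$: writing $\varphi_{T^*}=r\phi$ with $\|\phi\|_\Omega=1$, the observation term scales like $r^2(\int_0^{T^*}\|\varphi^\varepsilon(t;\phi,T^*)\|_\omega\,dt)^2$, and the observability inequality for~(\ref{equationyu2.02})---uniform in $\varepsilon$ because the potentials $a_\varepsilon$ are uniformly bounded by $(H_1)$---bounds the inner integral below by a positive constant independent of $\varepsilon$ and of $\phi$ on the unit sphere. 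This yields a uniform bound on $\|\hat\varphi^\varepsilon_{T^*}\|_\Omega$. Extracting a weakly convergent subsequence, I would identify the weak limit with $\hat\varphi_{T^*}$ by combining the pointwise convergence $J^{T^*}_\varepsilon(\psi)\to J^{T^*}(\psi)$ (a consequence of the backward analogue of~(\ref{yu2.03})), the weak lower semicontinuity of $J^{T^*}$, and the minimality of $\hat\varphi^\varepsilon_{T^*}$; uniqueness of the minimizer of $J^{T^*}$ then removes the need to pass to subsequences. Convergence of the functional values upgrades, through the $K\|\cdot\|_\Omega$ term, to convergence of the norms $\|\hat\varphi^\varepsilon_{T^*}\|_\Omega\to\|\hat\varphi_{T^*}\|_\Omega$, whence strong convergence in $L^2(\Omega)$. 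Continuous dependence then gives $\varphi^\varepsilon(\cdot;\hat\varphi^\varepsilon_{T^*},T^*)\to\varphi(\cdot;\hat\varphi_{T^*},T^*)$ in $C([0,T^*];L^2(\Omega))$, and passing to the limit in~(\ref{yub2.01}) yields $M_\varepsilon\to\int_0^{T^*}\|\varphi(t;\hat\varphi_{T^*},T^*)\|_\omega\,dt=M$, which is part $(iii)$.

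For $(iv)$ and $(v)$ I would insert the convergences just obtained into the bang-bang formulas~(\ref{yub4.10}) and~(\ref{yu0.01}). Since each factor $M_\varepsilon\,\chi_\omega\varphi^\varepsilon(t)/\|\varphi^\varepsilon(t)\|_\omega$ has $L^2(\Omega)$-norm $M_\varepsilon$, which is bounded, the $L^2((0,T^*)\times\Omega)$-convergence in $(iv)$ follows from the pointwise-in-$t$ convergence of the integrands together with dominated convergence. The subtle point, and the main obstacle, is the $L^\infty$-convergence in $(v)$: the denominators $\|\varphi^\varepsilon(t;\hat\varphi^\varepsilon_{T^*},T^*)\|_\omega$ may tend to zero as $t\uparrow T^*$, so uniform convergence can only be expected away from $T^*$. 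On a compact interval $[0,T^*-\eta]$ I would produce a lower bound $\|\varphi^\varepsilon(t;\hat\varphi^\varepsilon_{T^*},T^*)\|_\omega\geq\delta(\eta)>0$ uniform in small $\varepsilon$; this rests on the unique continuation property for~(\ref{equationyu2.02}), which forces $\varphi^\varepsilon(t)\neq0$ for $t<T^*$ once $\hat\varphi^\varepsilon_{T^*}\neq0$, made quantitative and stable under the perturbation by the uniform observability estimate and the strong convergence $\hat\varphi^\varepsilon_{T^*}\to\hat\varphi_{T^*}\neq0$. With such a uniform lower bound the quotient $\chi_\omega\varphi^\varepsilon(t)/\|\varphi^\varepsilon(t)\|_\omega$ depends continuously on the adjoint state away from the singularity, and the $C([0,T^*-\eta];L^2(\Omega))$-convergence of the adjoint states together with $M_\varepsilon\to M$ promotes the pointwise convergence to uniform convergence on $[0,T^*-\eta]$, giving $(v)$.
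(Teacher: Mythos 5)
Your overall architecture matches the paper's: the Euler--Lagrange/duality identity $M_\varepsilon=M^\varepsilon_{T^*}$ is Proposition~\ref{lemmayu4.1}; parts $(i)$--$(ii)$ come from the equivalence theorem (the paper spells out the contradiction argument that you compress into ``immediate consequences'' --- a control reaching the target before $T^*$ would, extended by zero and using the uniform decay (\ref{yu-b-2}), be a minimal-norm control at time $T^*$ vanishing on a subinterval, which unique continuation forbids); part $(iii)$ is Corollary~\ref{corollaryyu1.1-1} plus Remark~\ref{ireyu2.1}; and part $(v)$ uses the same lower bound $\|\varphi(t;\hat{\varphi}_{T^*},T^*)\|_\omega\geq C_\eta$ on $[0,T^*-\eta]$ obtained from compactness and unique continuation. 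Your treatment of $(iv)$ by dominated convergence on the explicit bang-bang formulas is a valid and arguably cleaner alternative to the paper's route (weak-star compactness, identification of the limit with $u^*$ by uniqueness, then upgrading via $\|u^{*,2}_\varepsilon(t)\|_\Omega=M_\varepsilon\to M$).

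There is, however, one step that fails as written: your uniform coercivity estimate for $J^{T^*}_\varepsilon$. You claim the observability inequality bounds $\int_0^{T^*}\|\varphi^\varepsilon(t;\phi,T^*)\|_\omega\,dt$ \emph{below} by a positive constant uniformly over $\|\phi\|_\Omega=1$. No such bound exists for the heat equation: taking $\phi=e_n$ the $n$-th Dirichlet eigenfunction (say with $a\equiv 0$ for illustration), one has $\int_0^{T^*}\|\varphi(t;e_n,T^*)\|_\omega\,dt\leq\int_0^{T^*}e^{-\lambda_n(T^*-t)}\,dt\leq 1/\lambda_n\to 0$. The observability inequality (\ref{117}) runs in the opposite direction --- it bounds $\|\varphi^\varepsilon(0)\|_\Omega$, the smoothed endpoint, \emph{above} by the observation integral --- and that is precisely how the paper gets the uniform bound on the minimizers: from $J^{T^*}_\varepsilon(\hat{\varphi}^\varepsilon_{T^*})\leq J^{T^*}_\varepsilon(0)=0$ one obtains
\begin{equation*}
K\|\hat{\varphi}^\varepsilon_{T^*}\|_\Omega\leq-\tfrac{1}{2}I_\varepsilon^2+\|y_0\|_\Omega\,\|\varphi^\varepsilon(0;\hat{\varphi}^\varepsilon_{T^*},T^*)\|_\Omega\leq-\tfrac{1}{2}I_\varepsilon^2+C_1\|y_0\|_\Omega I_\varepsilon\leq\tfrac{1}{2}C_1^2\|y_0\|_\Omega^2,
\end{equation*}
where $I_\varepsilon$ denotes the observation integral and $C_1$ the $\varepsilon$-uniform observability constant. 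Replace your coercivity step by this argument and the remainder of your proof of $\hat{\varphi}^\varepsilon_{T^*}\to\hat{\varphi}_{T^*}$, and hence of parts $(iii)$--$(v)$, goes through.
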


\par
    The motivation for us to study such approximations presented in Theorem~\ref{ithyu1.1} and Theorem~\ref{ithyu1.2} are as follows.
    From the perspective  of applications,  the perturbations in the system potential often appears in  some physical phenomenons.
   It should be interesting and important to study how the perturbations influence some quantities related to the system without perturbations. The optimal control and the optimal time are such quantities. Our Theorem~\ref{ithyu1.1} and Theorem~\ref{ithyu1.2} reveal that the influence on the optimal control, as well as the optimal time, caused by small perturbations in the system potential, is small.  From the mathematical point of view,
   it deserves to mention that there have been a lot of papers studying  how the perturbations on the initial data influence the optimal time (see
   \cite{b15, b16, b20, b17, b19, b21, b18, b22} and references therein). However, to the best of our knowledge, such study when the perturbations appear in the system potential has not been touch upon.

    The rest of the paper is organized as follows:
   Section 2 studies  the associated minimization problems stated in the above and gives some preliminary results.  Section 3 introduces the equivalence theorem of the minimal time and norm control problems,  and provides some explicit formulas for the optimal
    controls to the problems studied in this paper. Section 4 presents  the proof of Theorem \ref{ithyu1.1} and Theorem \ref{ithyu1.2}. Some comments are given in Section 5.

\section{Preliminaries}
    In this section, we  present some preliminary  results about the time optimal control problems and the minimization problems associated with  the functionals $J^{T^*}(\cdot)$, $J^{T^*}_\varepsilon(\cdot)$ and $J^{T^{*,1}_\varepsilon}_\varepsilon(\cdot)$. We will write $S(\cdot)$  and $S^\varepsilon(\cdot)$ for the strongly continuous
    semigroups which are analytic and compact  generated by $(-\triangle-a)$ and $(-\triangle-a_\varepsilon)$ in $L^2(\Omega)$ respectively. (For the analyticity, we refer the reader to  Corollary 2.2 on page 81 in \cite{b25}.) By $(H_3)$, it holds that
    \begin{equation}\label{yu-b-1}
    \|S(t)\|\leq e^{-\delta_0 t}\;\;\mbox{for each}\;\;
    t\geq 0,
\end{equation}
    where
\begin{equation}
\delta_0\equiv
\begin{cases}
    \lambda_1-\|a\|_{L^\infty(\Omega)}
     &\mbox{if}\;\;\|a\|_{L^\infty(\Omega)}<\lambda_1,\\
    \lambda_1
     &\mbox{if}\;\;a(x)\leq 0\;\mbox{for any}\;x\in\Omega.
\end{cases}
\end{equation}
    By $(H_1)$ and $(H_3)$, there is an
    $\varepsilon_\rho>0$ such that
\begin{equation}\label{yu-b-2}
    \|S^\varepsilon(t)\|\leq e^{-\hat{\delta} t}\;\;\mbox{for each}\;\; t\in\mathbb{R}^+,\;\;\mbox{when}\;\; \varepsilon\in(0,\varepsilon_\rho],
\end{equation}
    where
     \begin{equation}\label{wang2.2}
    \hat{\delta}\equiv
\begin{cases}
    \frac{\lambda_1-\|a\|_{L^\infty(\Omega)}}{2}
     &\mbox{when}\;\;\|a\|_{L^\infty(\Omega)}<\lambda_1,\\
     \frac{\lambda_1}{2}
     &\mbox{when}\;\;a(x)\leq 0\;\mbox{for any}\;x\in\Omega.
\end{cases}
\end{equation}

      First of all, we introduce the following proposition concerning with the existence and the uniqueness of optimal
    controls for  Problems $(TP)$,
    $(TP^\varepsilon_1)$ and
    $(TP^\varepsilon_2)$.
\begin{proposition}\label{lemmayu3.1}
    Suppose that  $(H_1)$, $(H_2)$ and $(H_3)$ hold. Let $\varepsilon_\rho>0$ verify (\ref{yu-b-2}).      Then,  the problems $(TP)$, $(TP^\varepsilon_1)$ and $(TP^\varepsilon_2)$, with
     $\varepsilon\in (0, \varepsilon_\rho]$, have the unique optimal controls.
\end{proposition}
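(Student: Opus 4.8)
The plan is to prove existence and uniqueness of optimal controls for each of the three time optimal control problems $(TP)$, $(TP^\varepsilon_1)$, and $(TP^\varepsilon_2)$. Since all three problems have the same structure, I would prove the assertion in detail for $(TP)$ and then indicate that the arguments for $(TP^\varepsilon_1)$ and $(TP^\varepsilon_2)$ are identical, using the uniform decay estimate \eqref{yu-b-2} with $\varepsilon\in(0,\varepsilon_\rho]$ where the decay of $S(\cdot)$ in \eqref{yu-b-1} is used for $(TP)$. The argument splits into three parts: showing that the admissible class $\mathcal{U}_M$ is nonempty (so that $T^*$ is finite), establishing existence of an optimal control, and finally establishing its uniqueness.

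\textbf{Nonemptiness and finiteness of the optimal time.} First I would verify that $\mathcal{U}_M$ is nonempty, equivalently that the target $\overline{B_K(0)}$ is reachable in finite time under the control constraint $\overline{B_M(0)}$. The key observation is the decay estimate \eqref{yu-b-1}: for the uncontrolled trajectory (taking $u\equiv 0$) one has $\|y(t;0,y_0)\|_\Omega=\|S(t)y_0\|_\Omega\leq e^{-\delta_0 t}\|y_0\|_\Omega$, which tends to zero as $t\to+\infty$. Hence there exists $t_0>0$ such that $\|S(t_0)y_0\|_\Omega\leq K$, so $y(t_0;0,y_0)\in\overline{B_K(0)}$ and the zero control already lies in $\mathcal{U}_M$. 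This shows $\mathcal{U}_M\neq\emptyset$ and $0\leq T^*\leq t_0<+\infty$.

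\textbf{Existence.} The standard approach is to take a minimizing sequence $\{u_n\}\subset\mathcal{U}_M$ with associated times $t_n\searrow T^*$ such that $y(t_n;u_n,y_0)\in\overline{B_K(0)}$. Since each $u_n$ is valued in $\overline{B_M(0)}$, the sequence is bounded in $L^\infty(\mathbb{R}^+;L^2(\Omega))$, hence (restricting to a finite interval, say $[0,t_0]$) admits a subsequence converging weakly-star to some limit $u^*$ whose values remain in the convex closed ball $\overline{B_M(0)}$. By linearity and the boundedness/compactness of the associated control-to-state map (using that the semigroup $S(\cdot)$ is compact, as stated in the preliminaries), one passes to the limit in the state equation to obtain $y(T^*;u^*,y_0)\in\overline{B_K(0)}$; the compactness is what upgrades the weak convergence of controls to strong convergence of the terminal states so that the closed target constraint survives in the limit. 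Setting $u^*\equiv 0$ on $(T^*,+\infty)$ then yields an admissible optimal control attaining $T^*$.

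\textbf{Uniqueness.} This is the part I expect to be the main obstacle, and it is where the bang-bang property and the decay hypothesis $(H_3)$ enter. Uniqueness of the time optimal control does not follow from the existence argument alone; it relies on the bang-bang property, namely that any optimal control satisfies $\|u^*(t)\|_\Omega=M$ for a.e.\ $t\in(0,T^*)$, together with the unique-continuation / analyticity structure of the adjoint equation. The argument I would run is as follows: by the equivalence theorem with the minimal norm problem (to be introduced in Section 3) and the explicit representation \eqref{yu0.01}, any optimal control is determined pointwise by the minimizer $\hat{\varphi}_{T^*}$ of the strictly convex functional $J^{T^*}(\cdot)$; since that minimizer is unique (cited from Section 4.2 of \cite{b3}) and the adjoint state $\varphi(t;\hat{\varphi}_{T^*},T^*)$ is nonzero for a.e.\ $t$ by unique continuation for the backward heat equation, the formula \eqref{yu0.01} produces exactly one optimal control. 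Alternatively, and perhaps more cleanly at this preliminary stage, uniqueness can be obtained directly from the bang-bang property of Theorem~1 in \cite{b10}: if two optimal controls existed, their average would also be admissible and optimal but would fail the bang-bang norm condition on a set of positive measure, a contradiction. I would take whichever route keeps the proof self-contained relative to the results already available, and flag that the genuine content of uniqueness is precisely the bang-bang property, the decay estimates \eqref{yu-b-1}–\eqref{yu-b-2} serving only to guarantee finiteness and well-posedness.
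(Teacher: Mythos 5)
Your proposal is correct and follows essentially the same route as the paper, which simply cites Lemma 3.2 of \cite{b7} (or Theorem 2 of \cite{b8}) for existence — relying on the decay estimates (\ref{yu-b-1}) and (\ref{yu-b-2}) exactly as you use them for nonemptiness and the minimizing-sequence argument — and then deduces uniqueness from the bang-bang property of \cite{b10} via the standard averaging contradiction of \cite{b24} and \cite{b9}. Of your two uniqueness routes you should indeed discard the one through the equivalence theorem and the representation formula (\ref{yu0.01}), since that machinery is developed only in Section 3 and itself presupposes this proposition; the averaging argument is the one the paper (implicitly) uses.
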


\begin{proof}
    By (\ref{yu-b-1}) and (\ref{yu-b-2}), we can use Lemma 3.2 in  \cite{b7} (or  Theorem 2 in \cite{b8}) to get the existence of optimal controls to the problems
    $(TP)$, $(TP^\varepsilon_1)$ and $(TP^\varepsilon_2)$, where
    $\varepsilon\in (0, \varepsilon_\rho]$.

      From  \cite{b10} (see Theorem 1  and Remark in and at the end of this paper), the problem $(TP)$ has the bang-bang property, i.e., any optimal controls $u^*(\cdot)$ to $(TP)$ satisfies that
      $\|u^*(\cdot)\|_\Omega=M$ for a.e. $t\in [0,T^*)$ with $T^*$ the optimal time to $(TP)$. The same can be said about the problems  $(TP_1^\varepsilon)$ and $(TP_2^\varepsilon)$.
           Then the uniqueness of the time optimal controls follows from the bang-bang property (see Theorem 2.1.7 on page 36 in \cite{b24} or Theorem 1.2 in \cite{b9}). This completes the proof.
\end{proof}
\par

    Next, we  introduce two minimization problems.
   Let $T>0$ and $T_\varepsilon>0$.  Define two functionals $J^T(\cdot)$ and $J^{T_\varepsilon}_\varepsilon(\cdot)$ over $L^2(\Omega)$ by
\begin{eqnarray}\label{101-1}
    J^T(\varphi_T)&=&\frac{1}{2}\left(\int_0^T\|\varphi(t;\varphi_T,T)\|_\omega
     dt\right)^2+\langle
    y_0,\varphi(0;\varphi_T,T)\rangle_\Omega\nonumber\\
    &\;&+K\|\varphi_T\|_\Omega,\;\;
    \varphi_T\in L^2(\Omega)
\end{eqnarray}
    and
\begin{eqnarray}\label{102-1}
    J^{T_\varepsilon}_\varepsilon(\varphi^\varepsilon_{T_\varepsilon})
    &=&\frac{1}{2}\left(\int_0^{T_\varepsilon}
    \|\varphi^\varepsilon(t;\varphi^\varepsilon_
    {T_\varepsilon},T_\varepsilon)\|_{\omega}dt\right)^2+\langle
    y_0,\varphi^\varepsilon(0;\varphi_{T_\varepsilon}
    ^\varepsilon,T_\varepsilon)\rangle_\Omega\nonumber\\
    &\;&+K\|\varphi^\varepsilon_{T_\varepsilon}\|_\Omega,\;\;
    \varphi^\varepsilon_{T_\varepsilon}\in L^2(\Omega),
\end{eqnarray}
     where $\varphi(\cdot;\varphi_T,T)$ and $\varphi^\varepsilon
    (\cdot;\varphi^\varepsilon_{T_\varepsilon},T_\varepsilon)$ are
    accordingly the solutions of the following equations
\begin{equation}\label{103-1}
\begin{cases}
    \varphi_t+\triangle\varphi+a\varphi=0&\mbox{in}\;\;\Omega\times(0,T),\\
    \varphi=0&\mbox{on}\;\;\partial\Omega\times(0,T),\\
    \varphi(T)=\varphi_T\in L^2(\Omega)
\end{cases}
\end{equation}
    and
\begin{equation}\label{104-1}
\begin{cases}
    \varphi^\varepsilon_t+\triangle\varphi^\varepsilon
    +a_\varepsilon \varphi^\varepsilon=0
    &\mbox{in}\;\;\Omega\times(0,T_\varepsilon),\\
    \varphi^\varepsilon=0&\mbox{on}\;\;
    \partial\Omega\times(0,T_\varepsilon),\\
    \varphi^\varepsilon(T_\varepsilon)
    =\varphi^\varepsilon_{T_\varepsilon}\in L^2(\Omega).
\end{cases}
\end{equation}
   Consider  two minimization
    problems as follows:

    \vskip 5pt

  \noindent $(MP)\;\;$ To find  $\hat{\varphi}_T$
    in $L^2(\Omega)$ such that
\begin{equation}\label{105-1}
    J^T(\hat{\varphi}_T)=\inf_{\varphi_T\in L^2(\Omega)}J^T(\varphi_T);
\end{equation}

    \noindent $(MP^\varepsilon)\;\; $ To find  $\hat{\varphi}^\varepsilon_{T_\varepsilon}$ in $L^2(\Omega)$ such that
\begin{equation}\label{106-1}
    J^{T_\varepsilon}_\varepsilon(\hat{\varphi}^\varepsilon_{T_\varepsilon})
    =\inf_{\varphi^\varepsilon_{T_\varepsilon}\in
    L^2(\Omega)}J^{T_\varepsilon}_\varepsilon(\varphi^\varepsilon_{T_\varepsilon}).
\end{equation}
\noindent
    We assume that
    \vskip 5pt

    \noindent $(H_4)$ $|T_\varepsilon-T|\to 0$ as $\varepsilon\to 0^+$.
    \vskip 5 pt

    \noindent $(H_5)$ $y_0\in L^2(\Omega)$ such
    that $y(T; y_0)\notin \overline{B_K(0)}$, where
    $y(T; y_0)\equiv y(T;0,y_0)$.

\vskip 5pt
\par
\noindent     Clearly, when $T_\varepsilon\equiv T$ for all $\varepsilon>0$, it holds that
\begin{equation}\label{b11}
    \|\varphi^\varepsilon(\cdot;\varphi_T,T)-\varphi(\cdot;\varphi_T,T)
    \|_{C([0,T];L^2(\Omega))}\to
    0~~\text{as}~\varepsilon\to 0^+,\;\;\mbox{for any}\;\; \varphi_T\in L^2(\Omega).
\end{equation}

   \begin{lemma}\label{lemmawang1}
   $(i)$ The functional $J^T(\cdot)$ has a unique minimizer.
   $(ii)$ The minimizer of $J^T(\cdot)$
   is not zero if and only if $(H_5)$ holds. $(iii)$ For each
   $\varepsilon>0$, the functional
   $J^{T_\varepsilon}_\varepsilon(\cdot)$ has a unique minimizer.
   $(iv)$ Suppose that $(H_1)$, $(H_4)$ and $(H_5)$ hold.
   Then there is an $\varepsilon_0>0$ such that
    each $J^{T_\varepsilon}_\varepsilon(\cdot)$, with $\varepsilon\in (0, \varepsilon_0]$,
    has a unique non-zero minimizer.
   \end{lemma}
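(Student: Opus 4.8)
The plan is to read all four assertions off the convex structure of the functionals, relying on only two analytic facts about the backward equations: the unique continuation property (if $\varphi(\cdot;\varphi_T,T)$ vanishes on $\omega\times(0,T)$ then $\varphi_T=0$) and the compactness of the semigroups $S(T)$ and $S^\varepsilon(T_\varepsilon)$. For $(i)$ I would first observe that $J^T$ is convex and strongly continuous, hence weakly lower semicontinuous, so by the direct method it suffices to establish coercivity. Setting $\psi_T=\varphi_T/\|\varphi_T\|_\Omega$ and using the homogeneity of $\varphi_T\mapsto\varphi(\cdot;\varphi_T,T)$,
\[
\frac{J^T(\varphi_T)}{\|\varphi_T\|_\Omega}=\frac{\|\varphi_T\|_\Omega}{2}\Big(\int_0^T\|\varphi(t;\psi_T,T)\|_\omega\,dt\Big)^2+\langle y_0,\varphi(0;\psi_T,T)\rangle_\Omega+K.
\]
Given $\|\varphi_T^n\|_\Omega\to\infty$, I would pass to a subsequence: if $\int_0^T\|\varphi(t;\psi_T^n,T)\|_\omega\,dt\geq\delta>0$, the first term drives the quotient to $+\infty$; if instead it tends to $0$, then the seminorm $\varphi_T\mapsto\int_0^T\|\varphi(t;\varphi_T,T)\|_\omega\,dt$, being convex and continuous hence weakly lower semicontinuous, vanishes at the weak limit $\psi$ of $\psi_T^n$, so $\psi=0$ by unique continuation, and since $\varphi(0;\cdot,T)=S(T)(\cdot)$ is compact one gets $\varphi(0;\psi_T^n,T)\to0$ strongly, whence $\langle y_0,\varphi(0;\psi_T^n,T)\rangle_\Omega\to0$ and $\liminf J^T(\varphi_T^n)/\|\varphi_T^n\|_\Omega\geq K>0$. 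Coercivity then yields a minimizer. Uniqueness I would obtain from strict convexity: equality in the convexity of $K\|\cdot\|_\Omega$ forces any two minimizers to be non-negatively proportional, and on a ray through a nonzero point the above seminorm is strictly positive by unique continuation, so the squared term is strictly convex there, excluding two distinct minimizers.

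For $(ii)$, since $J^T$ is convex, $0$ is its minimizer if and only if $0\in\partial J^T(0)$, and I would compute this subdifferential termwise. The squared-seminorm term is $O(\|\varphi_T\|_\Omega^2)$, hence Fréchet differentiable at $0$ with zero derivative; writing $\varphi(0;\varphi_T,T)=S(T)\varphi_T$ (time reversal) and using the self-adjointness of $-\triangle-a$, the linear term contributes the gradient $S(T)y_0=y(T;y_0)$; and $\partial(K\|\cdot\|_\Omega)(0)=\overline{B_K(0)}$. Therefore $0\in\partial J^T(0)$ iff $-y(T;y_0)\in\overline{B_K(0)}$, i.e. iff $\|y(T;y_0)\|_\Omega\leq K$, which is exactly the negation of $(H_5)$; so the minimizer is nonzero precisely when $(H_5)$ holds.

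Part $(iii)$ is the verbatim analogue of $(i)$ for each fixed $\varepsilon$, because $-\triangle-a_\varepsilon$ likewise generates a compact self-adjoint semigroup whose backward equation has the unique continuation property. For $(iv)$, the analogue of $(ii)$ gives $\hat{\varphi}^\varepsilon_{T_\varepsilon}\neq0\iff\|S^\varepsilon(T_\varepsilon)y_0\|_\Omega>K$, so it remains to pass to the limit in this free evolution. I would split
\[
\|S^\varepsilon(T_\varepsilon)y_0-S(T)y_0\|_\Omega\leq\|S^\varepsilon(T_\varepsilon)y_0-S(T_\varepsilon)y_0\|_\Omega+\|S(T_\varepsilon)y_0-S(T)y_0\|_\Omega,
\]
the first term tending to $0$ by continuous dependence on the potential, uniformly for times near $T$ (from $(H_1)$ and the uniform bound (\ref{yu-b-2})), and the second by strong continuity of $S$ with $(H_4)$. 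Since $(H_5)$ gives $\|S(T)y_0\|_\Omega>K$, there is $\varepsilon_0>0$ with $\|S^\varepsilon(T_\varepsilon)y_0\|_\Omega>K$ on $(0,\varepsilon_0]$, which is $(iv)$.

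I expect the real content to lie in the coercivity step of $(i)$ and $(iii)$, namely the implication that $\int_0^T\|\varphi(t;\psi_T^n,T)\|_\omega\,dt\to0$ forces $\langle y_0,\varphi(0;\psi_T^n,T)\rangle_\Omega\to0$, where unique continuation and the compactness of $S(T)$ are used in tandem; the rest is routine convex analysis together with the continuous-dependence estimate used in $(iv)$.
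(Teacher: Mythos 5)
Your proposal is correct and follows essentially the same route as the paper: parts $(i)$--$(iii)$ are precisely the direct-method argument (coercivity via unique continuation plus compactness of $S(T)$, uniqueness via strict convexity along rays) that the paper delegates to its references, and your proof of $(iv)$ --- passing $\|S^\varepsilon(T_\varepsilon)y_0\|_\Omega>K$ to the limit by splitting off the potential perturbation and the time shift, then invoking the $\varepsilon$-analogue of $(ii)$ --- is the paper's argument verbatim. The only cosmetic difference is that you obtain the zero/nonzero dichotomy in $(ii)$ from the characterization $0\in\partial J^T(0)=\{y(T;y_0)\}+\overline{B_K(0)}$ rather than by testing $J^T$ along a suitable direction, which is an equivalent (and clean) piece of convex analysis.
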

   \begin{proof}
   By a  very similar argument as Proposition 2.1 in \cite{b6} (see also Section 4.2 in \cite{b3}),
    one can easily check that both $J^T(\cdot)$
    and $J^{T_\varepsilon}_\varepsilon(\cdot)$, with  $\varepsilon>0$,
    are continuous, coercive and strictly convex. Hence, they have
    unique minimizers. Moreover,
    the minimizer of $J(\cdot)$ is not zero if and only if $(H_5)$ holds, i.e.,
    $y(T; y_0)\notin \overline{B_K(0)}$. The rest is to show $(iv)$. From $(H_4)$,
    $|T_\varepsilon-T|\to 0$ as $\varepsilon\to 0^+$. Given $\delta\in(0,T)$,
    there exists an $\varepsilon_1(\delta)>0$ such that
    $|T_\varepsilon-T|\leq \delta$, when $\varepsilon\in
    (0,\varepsilon_1(\delta)]$. Write
    $y(\cdot;y_0)$ and $y^\varepsilon(\cdot;y_0)$ for the solutions to the equations
\begin{equation}\label{iequationyu2.1}
\begin{cases}
    y_t-\triangle y-a
    y=0&\mbox{in}\;\;\Omega\times(0,T+\delta),\\
    y=0&\mbox{on}\;\;\partial\Omega\times(0,T+\delta),\\
    y(0)=y_0&\mbox{in}\;\;\Omega,
\end{cases}
\end{equation}
\begin{equation}\label{equationwang2}
\begin{cases}
    y^\varepsilon_t-\triangle y^\varepsilon-a_\varepsilon
    y^\varepsilon=0&\mbox{in}\;\;\Omega\times(0,T+\delta),\\
    y^\varepsilon=0&\mbox{on}\;\;\partial\Omega\times(0,T+\delta),\\
    y^\varepsilon(0)=y_0&\mbox{in}\;\;\Omega.
\end{cases}
\end{equation}
       From the equations (\ref{iequationyu2.1}) and (\ref{equationwang2}), and
    by the assumption  $(H_1)$,
    one can easily derive that
 \begin{equation}\label{b14}
    \|y^\varepsilon(\cdot;y_0)-y(\cdot;y_0)\|_{C([0,T+\delta];L^2(\Omega))}\to
    0~~\text{as}~\varepsilon\to 0^+.
\end{equation}
    This implies
\begin{eqnarray*}
    \|y^\varepsilon(T_\varepsilon;y_0)-y(T;y_0)\|_\Omega
    &\leq&\|y^\varepsilon(\cdot;y_0)-y(\cdot;y_0)\|_{C([0,T+\delta];L^2(\Omega))}\\
    &\;&+\|y(T_\varepsilon;y_0)-y(T;y_0)\|_\Omega\to
    0~~\text{as}~\varepsilon\to 0^+.
\end{eqnarray*}
    Because of $(H_5)$, there is an $\varepsilon_2(\delta)\leq
    \varepsilon_1(\delta)$ such that
\begin{equation}\label{127}
    y^\varepsilon(T_\varepsilon;y_0)\notin
    \overline{B_K(0)}~~\mbox{for each}\;\; \varepsilon\in (0, \varepsilon_2(\delta)].
\end{equation}
    Now, by taking $\varepsilon_0=\varepsilon_2(\delta)$ and making use
    of the conclusion $(ii)$, we are led to $(iv)$. This completes the proof.
   \end{proof}
\par
   In what follows,  we fix a $\delta\in(0,T)$ and let
   $\varepsilon_0=\varepsilon_0(\delta)>0$ be such that
  \begin{equation}\label{wang1.13}
  |T-T_\varepsilon|\leq \delta\;\;\mbox{and}\;\; arg \min
  J^{T^\varepsilon}_\varepsilon(\cdot)\neq 0,\;\;\mbox{when}\;\; \varepsilon\in (0, \varepsilon_0].
  \end{equation}

\begin{theorem}\label{theoremyu1.1}
    Suppose that $(H_1)$, $(H_4)$ and $(H_5)$ hold. Let
    $\varepsilon_0$ be given by (\ref{wang1.13}).
    Let $\hat{\varphi}_T$ and $\hat{\varphi}_{T_\varepsilon}^\varepsilon$, with $\varepsilon\in (0, \varepsilon_0]$,
     be accordingly the non-zero
    minimizers of functionals  $J^T(\cdot)$ and $J^{T_\varepsilon}_\varepsilon(\cdot)$ defined by (\ref{101-1}) and (\ref{102-1}). Then
    $\hat{\varphi}_{T_\varepsilon}^\varepsilon\to \hat{\varphi}_T$ strongly in
    $L^2(\Omega)$ as $\varepsilon\to 0^+$.
\end{theorem}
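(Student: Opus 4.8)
The plan is to argue by the direct method, viewing $\{\hat{\varphi}^\varepsilon_{T_\varepsilon}\}$ as a family in the Hilbert space $L^2(\Omega)$, extracting a weak limit, identifying it with $\hat{\varphi}_T$, and finally upgrading the convergence to the strong topology. The first task is a uniform bound $\sup_{\varepsilon\in(0,\varepsilon_0]}\|\hat{\varphi}^\varepsilon_{T_\varepsilon}\|_\Omega<\infty$. Since $\hat{\varphi}^\varepsilon_{T_\varepsilon}$ minimizes $J^{T_\varepsilon}_\varepsilon$ and $J^{T_\varepsilon}_\varepsilon(0)=0$, one has $J^{T_\varepsilon}_\varepsilon(\hat{\varphi}^\varepsilon_{T_\varepsilon})\le 0$; writing $I_\varepsilon=\int_0^{T_\varepsilon}\|\varphi^\varepsilon(t;\hat{\varphi}^\varepsilon_{T_\varepsilon},T_\varepsilon)\|_\omega\,dt$ and using $\varphi^\varepsilon(0;\hat{\varphi}^\varepsilon_{T_\varepsilon},T_\varepsilon)=S^\varepsilon(T_\varepsilon)\hat{\varphi}^\varepsilon_{T_\varepsilon}$ together with (\ref{yu-b-2}), this reads
\[
\tfrac12 I_\varepsilon^2+K\|\hat{\varphi}^\varepsilon_{T_\varepsilon}\|_\Omega
\le -\langle y_0,\varphi^\varepsilon(0;\hat{\varphi}^\varepsilon_{T_\varepsilon},T_\varepsilon)\rangle_\Omega
\le \|y_0\|_\Omega\,\|\hat{\varphi}^\varepsilon_{T_\varepsilon}\|_\Omega .
\]
The decisive ingredient is an observability estimate $\|\hat{\varphi}^\varepsilon_{T_\varepsilon}\|_\Omega\le C\,I_\varepsilon$ with $C$ independent of $\varepsilon$; this is precisely the uniform form of the coercivity already invoked in Lemma~\ref{lemmawang1}, and the constant can be taken uniform because $a_\varepsilon\to a$ in $L^\infty(\Omega)$ and $T_\varepsilon\to T$. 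Substituting, I get $I_\varepsilon^2\le 2C\|y_0\|_\Omega I_\varepsilon$, hence $I_\varepsilon\le 2C\|y_0\|_\Omega$ and then $\|\hat{\varphi}^\varepsilon_{T_\varepsilon}\|_\Omega\le 2C^2\|y_0\|_\Omega$.

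With the bound in hand I would take an arbitrary sequence $\varepsilon_k\to 0^+$ and a subsequence with $\hat{\varphi}^{\varepsilon_k}_{T_{\varepsilon_k}}\rightharpoonup\psi$ weakly in $L^2(\Omega)$, and show $\psi=\hat{\varphi}_T$. The engine is the fact that $S^\varepsilon(s)\hat{\varphi}^\varepsilon_{T_\varepsilon}\to S(s)\psi$ strongly in $L^2(\Omega)$ for each fixed $s>0$: splitting $S^\varepsilon(s)\hat{\varphi}^\varepsilon_{T_\varepsilon}-S(s)\psi=[S^\varepsilon(s)-S(s)]\hat{\varphi}^\varepsilon_{T_\varepsilon}+S(s)[\hat{\varphi}^\varepsilon_{T_\varepsilon}-\psi]$, the first term tends to $0$ since the variation-of-constants identity $S^\varepsilon(s)=S(s)+\int_0^s S(s-r)(a_\varepsilon-a)S^\varepsilon(r)\,dr$ with (\ref{yu-b-1})--(\ref{yu-b-2}) gives $\|S^\varepsilon(s)-S(s)\|\le s\,\|a_\varepsilon-a\|_{L^\infty(\Omega)}\to 0$, while the second tends to $0$ by the compactness of $S(s)$. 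Absorbing the discrepancy $T_\varepsilon\neq T$ through the operator-norm continuity of the analytic semigroup $S(\cdot)$ at positive times, the same reasoning yields $\varphi^\varepsilon(t;\hat{\varphi}^\varepsilon_{T_\varepsilon},T_\varepsilon)=S^\varepsilon(T_\varepsilon-t)\hat{\varphi}^\varepsilon_{T_\varepsilon}\to S(T-t)\psi=\varphi(t;\psi,T)$ for a.e. $t\in(0,T)$ and $\varphi^\varepsilon(0;\cdot)\to\varphi(0;\psi,T)$. Since these are uniformly bounded on $[0,T+\delta]$, dominated convergence shows that the first two terms of $J^{T_\varepsilon}_\varepsilon(\hat{\varphi}^\varepsilon_{T_\varepsilon})$ converge to the corresponding terms of $J^T(\psi)$, and, combined with the weak lower semicontinuity of $\varphi_T\mapsto K\|\varphi_T\|_\Omega$, this gives $\liminf_k J^{T_{\varepsilon_k}}_{\varepsilon_k}(\hat{\varphi}^{\varepsilon_k}_{T_{\varepsilon_k}})\ge J^T(\psi)$.

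On the other side, testing with the fixed datum $\hat{\varphi}_T$ and using the same convergences for fixed terminal data (cf.\ (\ref{b11})) gives $J^{T_\varepsilon}_\varepsilon(\hat{\varphi}_T)\to J^T(\hat{\varphi}_T)$, so by minimality $\limsup_k J^{T_{\varepsilon_k}}_{\varepsilon_k}(\hat{\varphi}^{\varepsilon_k}_{T_{\varepsilon_k}})\le J^T(\hat{\varphi}_T)$. Chaining the two estimates yields $J^T(\psi)\le J^T(\hat{\varphi}_T)$, so $\psi$ minimizes $J^T$; the uniqueness in Lemma~\ref{lemmawang1}$(i)$ then forces $\psi=\hat{\varphi}_T$. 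As the limit does not depend on the subsequence, the whole family converges weakly, $\hat{\varphi}^\varepsilon_{T_\varepsilon}\rightharpoonup\hat{\varphi}_T$, and moreover $J^{T_\varepsilon}_\varepsilon(\hat{\varphi}^\varepsilon_{T_\varepsilon})\to J^T(\hat{\varphi}_T)$.

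For the strong upgrade, I take $\psi=\hat{\varphi}_T$ above: the first two terms of $J^{T_\varepsilon}_\varepsilon(\hat{\varphi}^\varepsilon_{T_\varepsilon})$ converge to the first two terms of $J^T(\hat{\varphi}_T)$, and since the whole functional converges to $J^T(\hat{\varphi}_T)$, the remaining term must satisfy $K\|\hat{\varphi}^\varepsilon_{T_\varepsilon}\|_\Omega\to K\|\hat{\varphi}_T\|_\Omega$, i.e.\ $\|\hat{\varphi}^\varepsilon_{T_\varepsilon}\|_\Omega\to\|\hat{\varphi}_T\|_\Omega$. Weak convergence together with convergence of norms in the Hilbert space $L^2(\Omega)$ then forces $\hat{\varphi}^\varepsilon_{T_\varepsilon}\to\hat{\varphi}_T$ strongly, which is the claim. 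The step I expect to be most delicate is the uniform boundedness: everything downstream rests on the observability constant being controllable independently of $\varepsilon$, that is, on the stability of the heat-equation observability inequality under small $L^\infty$-perturbations of the potential and small changes of the horizon $T_\varepsilon$.
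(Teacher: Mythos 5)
Your overall architecture (uniform bound, weak subsequential limit, identification of the limit via a $\liminf$/$\limsup$ sandwich of the functional values, then upgrading to strong convergence through convergence of the norms) is exactly the paper's, and everything from the weak-limit extraction onward is sound. But the foundational step — the uniform bound — rests on an inequality that is false. You invoke an ``observability estimate'' of the form
$\|\hat{\varphi}^\varepsilon_{T_\varepsilon}\|_\Omega\le C\int_0^{T_\varepsilon}\|\varphi^\varepsilon(t;\hat{\varphi}^\varepsilon_{T_\varepsilon},T_\varepsilon)\|_\omega\,dt$,
i.e.\ a bound on the \emph{terminal} datum of the backward adjoint equation by the observation. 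No such inequality holds for the heat equation: taking $\varphi_T=e_k$, the $k$-th eigenfunction of $-\triangle-a$ with eigenvalue $\mu_k\to+\infty$, one has $\varphi(t)=e^{-\mu_k(T-t)}e_k$, so $\int_0^T\|\varphi(t)\|_\omega\,dt\le\mu_k^{-1}\to 0$ while $\|\varphi_T\|_\Omega=1$. This is not ``the uniform form of the coercivity'' of Lemma~\ref{lemmawang1}: that coercivity comes from the term $K\|\varphi_T\|_\Omega$ (with $K>0$) together with a qualitative compactness/unique-continuation argument, not from a quantitative bound on $\|\varphi_T\|_\Omega$ by the observation. (This is precisely why, as the paper notes in Section 5, the case $K=0$ breaks down.)

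The correct route, which is the one the paper takes in (\ref{129})--(\ref{117}), is to use the genuine observability inequality, which bounds the \emph{smooth end} $\|\varphi^\varepsilon(0;\hat{\varphi}^\varepsilon_{T_\varepsilon},T_\varepsilon)\|_\Omega\le C_1 I_\varepsilon$ (with $C_1$ uniform in $\varepsilon$ by $(H_1)$ and $(H_4)$, via the explicit dependence of the constant on $T_\varepsilon$ and $\|a_\varepsilon\|_{L^\infty(\Omega)}$), and then to exploit the $K\|\varphi_T\|_\Omega$ term rather than trying to close a loop on $\|\hat{\varphi}^\varepsilon_{T_\varepsilon}\|_\Omega$ itself:
\begin{equation*}
K\|\hat{\varphi}^\varepsilon_{T_\varepsilon}\|_\Omega\le -\tfrac12 I_\varepsilon^2+\|y_0\|_\Omega\,\|\varphi^\varepsilon(0;\hat{\varphi}^\varepsilon_{T_\varepsilon},T_\varepsilon)\|_\Omega\le -\tfrac12 I_\varepsilon^2+C_1\|y_0\|_\Omega I_\varepsilon\le \tfrac12 C_1^2\|y_0\|_\Omega^2,
\end{equation*}
which yields the bound directly. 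Note also that your last displayed inequality, $\tfrac12 I_\varepsilon^2+K\|\hat{\varphi}^\varepsilon_{T_\varepsilon}\|_\Omega\le\|y_0\|_\Omega\|\hat{\varphi}^\varepsilon_{T_\varepsilon}\|_\Omega$, obtained from the semigroup contraction, gives nothing by itself when $\|y_0\|_\Omega>K$, which is the relevant case; so the false observability step is really carrying all the weight. Replace it by the displayed chain above and the rest of your argument goes through.
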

\begin{proof}
    We start with showing  the existence of such  $C>0$, independent of $\varepsilon$,  that
\begin{equation}\label{wang1.14}
    \|\hat{\varphi}_{T_\varepsilon}^\varepsilon\|_\Omega\leq
    C\;\;\mbox{for all}\; \; \varepsilon\in (0, \varepsilon_0].
\end{equation}
    For this purpose, we first observe that
\[
    J^{T_\varepsilon}_\varepsilon(\hat{\varphi}_{T_\varepsilon}^\varepsilon)\leq
    0\;\;\mbox{for each}\;\;  \varepsilon\in (0, \varepsilon_0].
\]
    Along with (\ref{102-1}), this yields
\begin{equation}\label{129}
    K\|\hat{\varphi}_{T_\varepsilon}^\varepsilon\|_\Omega\leq
    -\frac{1}{2}\left(\int_0^{T_\varepsilon}\|\varphi^\varepsilon
    (t;\hat{\varphi}^\varepsilon_{T_\varepsilon},T_\varepsilon)\|_{\omega}dt\right)^2
    +|\langle
    y_0,\varphi^\varepsilon(0;\hat{\varphi}^\varepsilon_{T_\varepsilon},T_\varepsilon)
    \rangle_\Omega|.
\end{equation}
    From Proposition 3.2 in \cite{b1}, we have
\begin{eqnarray}\label{117}
    &\;&\|\varphi^\varepsilon(0;\hat{\varphi}^\varepsilon_{T_\varepsilon},
    T_\varepsilon)\|_\Omega\leq
    \exp\biggl[C_0\bigg(1+\frac{1}{T_\varepsilon}+T_\varepsilon+
    (T^{\frac{1}{2}}_\varepsilon+T_\varepsilon)
    \|a_\varepsilon\|_{L^\infty(\Omega)}\nonumber\\
    &\;&~~~~~~~+\|a_\varepsilon\|_{L^\infty(\Omega)}^{\frac{2}{3}}\biggl)\biggl]
    \int_0^{T_\varepsilon}\|\varphi^\varepsilon
    (t;\hat{\varphi}^\varepsilon_{T_\varepsilon},T_\varepsilon)
    \|_{\omega}dt,
\end{eqnarray}
    where  $C_0$ is a positive constant  depending only on
    $\Omega$ and $\omega$.  From this, $(H_1)$ and $(H_4)$,
    there exists a constant $C_1>0$ such that
$$
    \exp\left[C_0\biggl(1+\frac{1}{T_\varepsilon}+T_\varepsilon
    +(T_\varepsilon^{\frac{1}{2}}+
    T_\varepsilon)\|
    a_\varepsilon\|_{L^\infty(\Omega)}
    +\|a_\varepsilon\|_{L^\infty(\Omega)}^{\frac{2}{3}}\biggl)\right]
    \leq C_1,\;\;\mbox{when}\;\; \varepsilon\in (0, \varepsilon_0].
$$
    This, together with  (\ref{129}) and (\ref{117}), leads to (\ref{wang1.14}).
\par
    Next, we arbitrarily take a sequence  $\{\varepsilon_n\}_{n\in\mathbb{N}}
    \subset \{\varepsilon\}_{\varepsilon\in (0, \varepsilon_0]}$ such that
    $\varepsilon_n\to 0^+$ as $n\to \infty$.
    By (\ref{wang1.14}), there exists a subsequence of the above
    sequence, still denoted in the same way,
    such that
\begin{equation}\label{yu1.18}
    \hat{\varphi}_{T_{\varepsilon_n}}^{\varepsilon_n}\to
    \tilde{\varphi}~~\text{weakly in}~L^2(\Omega)~\text{as}~n\to
    \infty,
\end{equation}
    where $\tilde{\varphi}\in L^2(\Omega)$.
    We are going to prove
\begin{equation}\label{wang1.18}
    \tilde{\varphi}=\hat{\varphi}_T.
\end{equation}
    When (\ref{wang1.18}) is proved, by the arbitrariness of $\{\varepsilon_n\}_{n\in\mathbb{N}}\subset\{\varepsilon\}
    _{\varepsilon\in(0,\varepsilon_0]}$, we are led to
\begin{equation}\label{yu1.19}
    \hat{\varphi}_{T_\varepsilon}^\varepsilon\to
    \hat{\varphi}_T~~\text{weakly
    in}~L^2(\Omega)~\text{as}~\varepsilon\to 0^+.
\end{equation}
\par
    To show (\ref{wang1.18}),
    we first prove two statements as follows:
\begin{equation}\label{136}
    |\langle
    y_0,\varphi^{\varepsilon_n}(0;\hat{\varphi}_{T_{\varepsilon_n}}^{\varepsilon_n},
    T_{\varepsilon_n})
    -\varphi(0;\hat{\varphi}_{T_{\varepsilon_n}}^{\varepsilon_n},T)\rangle_\Omega|\to
    0~~\text{as}~n\to \infty
\end{equation}
    and
\begin{equation}\label{137}
   \left|\left(\int_0^{T_{\varepsilon_n}}\|\varphi^{\varepsilon_n}
    (t;\hat{\varphi}_{T_{\varepsilon_n}}^{\varepsilon_n},T_{\varepsilon_n})\|_{\omega}dt\right)^2
    -\left(\int_0^T\|\varphi(t;\hat{\varphi}_{T_{\varepsilon_n}}
    ^{\varepsilon_n},T)\|_{\omega}dt\right)^2\right|
    \to 0~~\text{as}~n\to \infty.
\end{equation}
\par
    The proof of  (\ref{136}) is as follows:
    We recall that  $\delta>0$ was fixed by (\ref{wang1.13}). For each $\psi_T\in L^2(\Omega)$, we denote by
    $\psi^{\varepsilon_n}(\cdot;\psi_T,0)$ and  $\psi(\cdot;\psi_T,0)$
     the solutions to the following two equations respectively:
\begin{equation}
\begin{cases}
    \psi_t^{\varepsilon_n}-\triangle\psi^{\varepsilon_n}
    -a_{\varepsilon_n}\psi^{\varepsilon_n}=0&\mbox{in}\;\;\Omega\times(0,T+\delta),\\
    \psi^{\varepsilon_n}=0&\mbox{on}\;\;\partial\Omega\times(0,T+\delta),\\
    \psi^{\varepsilon_n}(0)=\psi_T&\mbox{in}\;\;\Omega,
\end{cases}
\end{equation}
    and
\begin{equation}\label{132}
\begin{cases}
    \psi_t-\triangle\psi-a\psi=0&\text{in}\;\;\Omega\times(0,T+\delta),\\
    \psi=0&\text{on}\;\;\partial\Omega\times(0,T+\delta),\\
    \psi(0)=\psi_T&\text{in}\;\;\Omega.
\end{cases}
\end{equation}
    It is clear that
\begin{equation}\label{wang1.21}
    \varphi^{\varepsilon_n}(t;\hat{\varphi}_{T_{\varepsilon_n}}
    ^{\varepsilon_n},T_{\varepsilon_n})
    =\psi^{\varepsilon_n}(T_{\varepsilon_n}-t;\hat{\varphi}
    _{T_{\varepsilon_n}}^{\varepsilon_n},0)\;\;\mbox{for all}\;\;
    t \in [0,T_{\varepsilon_n}];
\end{equation}
\begin{equation}\label{wang1.23}
    \varphi(t;\hat{\varphi}_{T_{\varepsilon_n}}^{\varepsilon_n},T)=\psi(T-t;
    \hat{\varphi}_{T_{\varepsilon_n}}^{\varepsilon_n},0)\;\;\mbox{for all}
    \;\; t \in[0,T].
\end{equation}
    From (\ref{wang1.14}), there is a $C>0$,  independent of $n$,  such that
\begin{equation}\label{wang1.24}
    \|\psi^{\varepsilon_n}(\cdot;\hat{\varphi}_{T_{\varepsilon_n}}
    ^{\varepsilon_n},0)\|_{C([0,T+\delta];L^2(\Omega))}
    +\|\psi(\cdot;\hat{\varphi}_{T_{\varepsilon_n}}^{\varepsilon_n},0)
    \|_{C([0,T+\delta];L^2(\Omega))}\leq
    C\;\;\mbox{for all}\;\; n\in\mathbb{N}
\end{equation}
    and
\begin{equation}\label{yu1.27}
    \|\psi^{\varepsilon_n}(\cdot;\hat{\varphi}_{T_{\varepsilon_n}}^{\varepsilon_n},0)
    -\psi(\cdot;\hat{\varphi}_{T_{\varepsilon_n}}^{\varepsilon_n},0)
    \|_{C([0,T+\delta];L^2(\Omega))}\to
    0\;\;\text{as}\;\; n\to \infty.
\end{equation}
   From (\ref{132}),
    the strong continuity and compactness of $S(\cdot)$ and the fact that $\delta\in(0,T)$, it follows that there exists a subsequence of $\{\varepsilon_n\}$, still denoted by the same way, such that
\begin{eqnarray}\label{yu1.281}
    &\;&\|\psi(T_{\varepsilon_n};\hat{\varphi}_{T_{\varepsilon_n}}^{\varepsilon_n},0)
    -\psi(T;\hat{\varphi}_{T_{\varepsilon_n}}^{\varepsilon_n},0)\|_\Omega\nonumber\\
    &=&\|S(T_{\varepsilon_n})\hat{\varphi}_{T_{\varepsilon_n}}^{\varepsilon_n}
    -S(T)\hat{\varphi}_{T_{\varepsilon_n}}^{\varepsilon_n}\|_\Omega\nonumber\\
    &\leq&\|S(T_{\varepsilon_n})\hat{\varphi}_{T_{\varepsilon_n}}^{\varepsilon_n}
    -S(T_{\varepsilon_n})\tilde{\varphi}\|_\Omega
    +\|S(T_{\varepsilon_n})\tilde{\varphi}-S(T)\tilde{\varphi}\|_\Omega\nonumber\\
    &\;&+\|S(T)\hat{\varphi}_{T_{\varepsilon_n}}^{\varepsilon_n}
    -S(T)\tilde{\varphi}\|_\Omega\nonumber\\
    &=&\|S(T_{\varepsilon_n}-T+\delta)
    [S(T-\delta)\hat{\varphi}_{T_{\varepsilon_n}}^{\varepsilon_n}
    -S(T-\delta)\tilde{\varphi}]\|_\Omega\nonumber\\
    &\;&+\|S(T_{\varepsilon_n})\tilde{\varphi}-S(T)\tilde{\varphi}\|_\Omega
    +\|S(T)\hat{\varphi}_{T_{\varepsilon_n}}^{\varepsilon_n}
    -S(T)\tilde{\varphi}\|_\Omega\nonumber\\
    &\leq&\|S(T-\delta)\hat{\varphi}_{T_{\varepsilon_n}}^{\varepsilon_n}
    -S(T-\delta)\tilde{\varphi}\|_\Omega+\|S(T_{\varepsilon_n})
    \tilde{\varphi}-S(T)\tilde{\varphi}\|_\Omega\nonumber\\
    &\;&
    +\|S(T)\hat{\varphi}_{T_{\varepsilon_n}}^{\varepsilon_n}
    -S(T)\tilde{\varphi}\|_\Omega
    \to 0\;\;\text{as}\;\; n\to 0.
\end{eqnarray}
    This, together with (\ref{wang1.21}) and (\ref{wang1.23}), indicates
\begin{eqnarray*}
    &\;&\|\varphi^{\varepsilon_n}(0;\hat{\varphi}_{T_{\varepsilon_n}}
    ^{\varepsilon_n},T_{\varepsilon_n})
    -\varphi(0;\hat{\varphi}_{T_{\varepsilon_n}}^{\varepsilon_n},T)\|_\Omega
    =\|\psi^{\varepsilon_n}(T_{\varepsilon_n};
    \hat{\varphi}_{T_{\varepsilon_n}}^{\varepsilon_n},0)
    -\psi(T;\hat{\varphi}_{T_{\varepsilon_n}}^{\varepsilon_n},0)\|_\Omega\\
    &\leq&\|\psi^{\varepsilon_n}(T_{\varepsilon_n};
    \hat{\varphi}_{T_{\varepsilon_n}}^{\varepsilon_n},0)
    -\psi(T_{\varepsilon_n};\hat{\varphi}_{T_{\varepsilon_n}}
    ^{\varepsilon_n},0)\|_\Omega
    +\|\psi(T_{\varepsilon_n};\hat{\varphi}_{T_{\varepsilon_n}}^{\varepsilon_n},0)
    -\psi(T;\hat{\varphi}_{T_{\varepsilon_n}}^{\varepsilon_n},0)\|_\Omega\\
    &\leq&\|\psi^{\varepsilon_n}(\cdot;\hat{\varphi}_{T_{\varepsilon_n}}
    ^{\varepsilon_n},0)
    -\psi(\cdot;\hat{\varphi}_{T_{\varepsilon_n}}^{\varepsilon_n},0)
    \|_{C([0,T+\delta];L^2(\Omega))}
    +\|\psi(T_{\varepsilon_n};\hat{\varphi}_{T_{\varepsilon_n}}^{\varepsilon_n},0)
    -\psi(T;\hat{\varphi}_{T_{\varepsilon_n}}^{\varepsilon_n},0)\|_\Omega\\
    &\;& \to 0\;\;\mbox{as}\;\; n\to\infty.
\end{eqnarray*}
    Hence,
\begin{eqnarray}\label{yu1.28}
    &\;&|\langle
    y_0,\varphi^{\varepsilon_n}(0;\hat{\varphi}_{T_{\varepsilon_n}}
    ^{\varepsilon_n},T_{\varepsilon_n})
    -\varphi(0;\hat{\varphi}_{T_{\varepsilon_n}}^{\varepsilon_n},T)
    \rangle_\Omega|\nonumber\\
    &\leq&\|y_0\|_{\Omega}\|\varphi^{\varepsilon_n}
    (0;\hat{\varphi}_{T_{\varepsilon_n}}^{\varepsilon_n},T_{\varepsilon_n})
    -\varphi(0;\hat{\varphi}_{T_{\varepsilon_n}}^{\varepsilon_n},T)
    \|_\Omega\to 0\;\;\mbox{as}\;\;n\to \infty,
\end{eqnarray}
    which leads to  (\ref{136}).
\par
    The proof of  (\ref{137}) is as follows:
     By (\ref{yu1.18})  and by using Aubin's theorem (see Theorem 1.20 on page 26 in \cite{b2}), for any  subsequence $\{\varepsilon_{n_i}\}_{i\in\mathbb{N}}\subset\{\varepsilon_n\}_{n\in\mathbb{N}}$, there exists a subsequence of $\{\varepsilon_{n_i}\}$, still denoted in the same way, such that
\begin{equation}\nonumber
    \|\psi(\cdot;\hat{\varphi}_{T_{\varepsilon_{n_i}}}^{\varepsilon_{n_i}},0)
    -\psi(\cdot;\tilde{\varphi},0)\|_{L^2(0,T+\delta;L^2(\Omega))}\to
    0~~\text{as}~~i\to \infty.
\end{equation}
    Since $\{\varepsilon_{n_i}\}_{i\in \mathbb{N}}$ was arbitrarily taken from $\{\varepsilon_n\}_{n\in\mathbb{N}}$, we have
\begin{equation}\label{yu133}
    \|\psi(\cdot;\hat{\varphi}_{T_{\varepsilon_n}}^{\varepsilon_n},0)
    -\psi(\cdot;\tilde{\varphi},0)\|_{L^2(0,T+\delta;L^2(\Omega))}\to
    0~~\text{as}~~n\to \infty.
\end{equation}
    It follows from (\ref{yu1.27}) that
\begin{equation}\label{134}
    \|\psi^{\varepsilon_n}(\cdot;\hat{\varphi}_{T_{\varepsilon_n}}^{\varepsilon_n},0)
    -\psi(\cdot;\tilde{\varphi},0)\|_{L^2(0,T+\delta;L^2(\Omega))}\to
    0~~\text{as}~~n\to \infty.
\end{equation}
   Meanwhile,  one can easily check that
\begin{eqnarray}\label{yu1.30}
    &\;&\left|\left(\int_0^{T_{\varepsilon_n}}\|\varphi^{\varepsilon_n}
    (t;\hat{\varphi}_{T_{\varepsilon_n}}^{\varepsilon_n},
    T_{\varepsilon_n})\|_{\omega}dt\right)^2
    -\left(\int_0^T\|\varphi(t;\hat{\varphi}_{T_{\varepsilon_n}}^{\varepsilon_n},T)
    \|_{\omega}dt\right)^2\right|\nonumber\\
    &\leq&\left|\int_0^{T_{\varepsilon_n}}\|\varphi^{\varepsilon_n}
    (t;\hat{\varphi}_{T_{\varepsilon_n}}^{\varepsilon_n},T_{\varepsilon_n})\|_{\omega}dt
    +\int_0^T\|\varphi(t;\hat{\varphi}_{T_{\varepsilon_n}}^{\varepsilon_n},T)
    \|_{\omega}dt\right|\nonumber\\
    &\;&~~~~~\times\left|\int_0^{T_{\varepsilon_n}}\|\varphi^{\varepsilon_n}
    (t;\hat{\varphi}_{T_{\varepsilon_n}}^{\varepsilon_n},T_{\varepsilon_n})\|_{\omega}dt
    -\int_0^T\|\varphi(t;\hat{\varphi}_{T_{\varepsilon_n}}^{\varepsilon_n},T)
    \|_{\omega}dt\right|\nonumber\\
    &=&\left|\int_0^{T_{\varepsilon_n}}\|\psi^{\varepsilon_n}
    (T_{\varepsilon_n}-t;\hat{\varphi}_{T_{\varepsilon_n}}
    ^{\varepsilon_n},0)\|_{\omega}dt
    +\int_0^T\|\psi(T-t;\hat{\varphi}_{T_{\varepsilon_n}}^{\varepsilon_n},0)
    \|_{\omega}dt\right|\nonumber\\
    &\;&~~~~~\times\left|\int_0^{T_{\varepsilon_n}}\|\psi^{\varepsilon_n}
    (T_{\varepsilon_n}
    -t;\hat{\varphi}_{T_{\varepsilon_n}}^{\varepsilon_n},0)\|_{\omega}dt
    -\int_0^T\|\psi(T-t;\hat{\varphi}_{T_{\varepsilon_n}}^{\varepsilon_n},0)
    \|_{\omega}dt\right|\nonumber\\
    &\equiv& A_n\times B_n.
\end{eqnarray}
    By (\ref{wang1.24}), we have
\begin{eqnarray}\label{yu1.31}
    A_n\leq 2C(T+\delta).
\end{eqnarray}
   Now we are going to prove that $B_n\to 0$ as $n\to\infty$.
   Indeed,
\begin{eqnarray}\label{yu1.32}
    B_n&\leq&\int_0^{T_{\varepsilon_n}\wedge
    T}\left|\|\psi^{\varepsilon_n}(T_{\varepsilon_n}-t;
    \hat{\varphi}_{T_{\varepsilon_n}}^{\varepsilon_n},0)\|_{\omega}-
    \|\psi(T-t;\hat{\varphi}_{T_{\varepsilon_n}}^{\varepsilon_n},0)
    \|_{\omega}\right|dt\nonumber\\
    &\;&+\int_{T_{\varepsilon_n}\wedge T}^{T_{\varepsilon_n}\vee T}
    \left|\|\tilde{\psi}^{\varepsilon_n}
    (T_{\varepsilon_n}-t;\hat{\varphi}_{T_{\varepsilon_n}}^{\varepsilon_n},0)\|_{\omega}
    -\|\tilde{\psi}(T-t;\hat{\varphi}_{T_{\varepsilon_n}}^{\varepsilon_n},0)
    \|_{\omega}\right|dt\nonumber\\
    &\equiv& E_n+F_n,
\end{eqnarray}
    where $T_1\vee T_2=\max(T_1,T_2)$
    and $T_1\wedge T_2=\min(T_1,T_2)$ for any $T_1, T_2\in\mathbb{R}$,
    and
$$
    \tilde{\psi}^{\varepsilon_n}(t;\hat{\varphi}_{T_{\varepsilon_n}}^{\varepsilon_n},0)=
\begin{cases}
    \psi^{\varepsilon_n}(t;\hat{\varphi}_{T_{\varepsilon_n}}
    ^{\varepsilon_n},0),&t\geq 0,\\
    ~~~~~0,&t<0,
\end{cases}
$$
$$
    \tilde{\psi}(t;\hat{\varphi}_{T_{\varepsilon_n}}^{\varepsilon_n},0)=
\begin{cases}
    \psi(t;\hat{\varphi}_{T_{\varepsilon_n}}^{\varepsilon_n},0),
    &t\geq 0,\\
    ~~~~~0,&t<0.
\end{cases}
$$
    It is clear
\begin{eqnarray}\label{yu1.33}
    E_n&\leq&\int_0^{T_{\varepsilon_n}\wedge
    T}\|\psi^{\varepsilon_n}(T_{\varepsilon_n}-t;
    \hat{\varphi}_{T_{\varepsilon_n}}^{\varepsilon_n},0)
    -\psi(T_{\varepsilon_n}-t;
    \hat{\varphi}_{T_{\varepsilon_n}}^{\varepsilon_n},0)\|_{\omega}dt\nonumber\\
    &\;&+\int_0^{T_{\varepsilon_n}\wedge
    T}\|\psi(T_{\varepsilon_n}-t;
    \hat{\varphi}_{T_{\varepsilon_n}}^{\varepsilon_n},0)
    -\psi(T-t;\hat{\varphi}_{T_{\varepsilon_n}}^{\varepsilon_n},0)
    \|_{\omega}dt\nonumber\\
    &\equiv& E^1_n+E^2_n.
\end{eqnarray}
    By (\ref{yu1.27}), we see that, when $n\to\infty$,
\begin{eqnarray}\label{yu1.34}
    E^1_n&\leq&
    \int_0^{T+\delta}\|\psi^{\varepsilon_n}
    (t;\hat{\varphi}_{T_{\varepsilon_n}}^{\varepsilon_n},0)
    -\psi(t;\hat{\varphi}_{T_{\varepsilon_n}}^{\varepsilon_n},0)\|_\Omega dt\nonumber\\
    &\leq& (T+\delta)\|\psi^{\varepsilon_n}
    (\cdot;\hat{\varphi}_{T_{\varepsilon_n}}^{\varepsilon_n},0)
    -\psi(\cdot;\hat{\varphi}_{T_{\varepsilon_n}}^{\varepsilon_n},0)
    \|_{C([0,T+\delta];L^2(\Omega))} \to 0.
\end{eqnarray}
   Let
    $z^{\varepsilon_n}(t)=\psi(T_{\varepsilon_n}
    -t;\hat{\varphi}_{T_{\varepsilon_n}}^{\varepsilon_n},0)
    -\psi(T-t;\hat{\varphi}_{T_{\varepsilon_n}}^{\varepsilon_n},0)$. Then   for any $t\in[0,T_{\varepsilon_n}\wedge T]$, it holds that
\begin{eqnarray}\label{yu1.35}
    \|z^{\varepsilon_n}(t)\|_\Omega&=&\|S(T_{\varepsilon_n}-t)
    \hat{\varphi}^{\varepsilon_n}_{T_{\varepsilon_n}}
    -S(T-t)\hat{\varphi}^{\varepsilon_n}_{T_{\varepsilon_n}}\|_\Omega\nonumber\\
    &\leq&\|S(T_{\varepsilon_n}-t)\hat{\varphi}^{\varepsilon_n}_{T_{\varepsilon_n}}
    -S(T_{\varepsilon_n}-t)\tilde{\varphi}\|_\Omega\nonumber\\
    &\;&+\|S(T_{\varepsilon_n}-t)\tilde{\varphi}
    -S(T-t)\tilde{\varphi}\|_\Omega\nonumber\\
    &\;&+\|S(T-t)\tilde{\varphi}-S(T-t)
    \hat{\varphi}^{\varepsilon_n}_{T_{\varepsilon_n}}\|_\Omega.
\end{eqnarray}
   By the strong continuity of
    $S(\cdot)$ over $[0,
    T+\delta]$, (\ref{yu133}) and (\ref{yu1.35}), we have
\begin{eqnarray}\label{yu1.36}
    E_n^2 &=&\int_0^{T_{\varepsilon_n}\wedge
    T}\|\psi(T_{\varepsilon_n}-t;
    \hat{\varphi}_{T_{\varepsilon_n}}^{\varepsilon_n},0)
    -\psi(T-t;\hat{\varphi}_{T_{\varepsilon_n}}^{\varepsilon_n},0)
    \|_{\omega}dt\nonumber\\
    &\leq&2T^{\frac{1}{2}}\|
    \psi(\cdot;\hat{\varphi}_T^{\varepsilon_n},0)
    -\psi(\cdot;\tilde{\varphi},0)\|_{L^2(0,T+\delta;L^2(\Omega))}\nonumber\\
    &\;&+\int_0^{T_{\varepsilon_n}\wedge
    T}\|S(T_{\varepsilon_n}-t)\tilde{\varphi}-S(T-t)\tilde{\varphi}\|_\Omega dt
    \to 0
    \;\;\mbox{as}\;\; n\to\infty.
\end{eqnarray}
    This, together with (\ref{yu1.34}), yields
\begin{equation}\label{yu1.37}
    E_n\to 0\;\;\mbox{as}\;\;n\to\infty.
\end{equation}
  On the other hand, one can easily derive from (\ref{wang1.24}) that
$$
    F_n\leq 2C(T_{\varepsilon_n}\vee
    T-T_{\varepsilon_n}\wedge T)\to 0\;\;\text{as}\;\;n\to\infty.
$$
  Along with (\ref{yu1.37}), this yields
$$
    B_n\to 0\;\;\mbox{as}\;\;n\to\infty.
$$
  This, along with   (\ref{yu1.30}) and (\ref{yu1.31}), leads to  (\ref{137}).
\par
    Let
\begin{eqnarray*}
    I_n&\equiv&|\langle
    y_0,\varphi^{\varepsilon_n}(0;\hat{\varphi}_{T_{\varepsilon_n}}^{\varepsilon_n},
    T_{\varepsilon_n})
    -\varphi(0;\hat{\varphi}_{T_{\varepsilon_n}}^{\varepsilon_n},T)\rangle_\Omega|\\
    &\;&+\frac{1}{2}\left|\left(\int_0^{T_{\varepsilon_n}}\|\varphi^{\varepsilon_n}
    (t;\hat{\varphi}_{T_{\varepsilon_n}}^{\varepsilon_n},
    T_{\varepsilon_n})\|_{\omega}dt\right)^2
    -\left(\int_0^T\|\varphi(t;\hat{\varphi}_{T_{\varepsilon_n}}
    ^{\varepsilon_n},T)\|_{\omega}dt\right)^2\right|.
\end{eqnarray*}
    By (\ref{136}) and (\ref{137}), we see that
\begin{equation}\label{yu1.41}
    I_n\rightarrow 0\;\;\mbox{as}\;\;n\to\infty.
\end{equation}
   By the similar methods as those used in the proof of
    (\ref{136}) and (\ref{137}), one can easily check that
\begin{eqnarray}\label{yu-bu-29}
    &\;&|J^{T_{\varepsilon_n}}_{\varepsilon_n}(\hat{\varphi}_T)
    -J^T(\hat{\varphi}_T)|\leq
    |\langle
    y_0,\varphi^{\varepsilon_n}(0;\hat{\varphi}_T,T_{\varepsilon_n})
    -\varphi(0;\hat{\varphi}_T,T)\rangle_\Omega|\nonumber\\
    \;&\;&~~~~+\frac{1}{2}\left|\left(\int_0^{T_{\varepsilon_n}}\|\varphi^{\varepsilon_n}
    (t;\hat{\varphi}_T,T_{\varepsilon_n})\|_{\omega}dt\right)^2
    -\left(\int_0^T\|\varphi(t;\hat{\varphi}_T,T)\|_{\omega}dt\right)^2\right|\nonumber\\
    &\;&~~~~\to 0~~\text{as}~n\to\infty.
\end{eqnarray}
    Meanwhile, from (\ref{yu133}) and the compactness of $S(\cdot)$, we have
\begin{eqnarray*}
    &\;&\left|\int_0^T\|\varphi(t;\hat{\varphi}_{T_{\varepsilon_n}}
    ^{\varepsilon_n},T)\|_\omega-\int_0^T\|\varphi(t;\tilde{\varphi},T)\|_\omega dt\right|\\
    &\leq&T^{\frac{1}{2}}\|\varphi(\cdot;\hat{\varphi}_{T_{\varepsilon_n}}^{\varepsilon_n},
    T)-\varphi(\cdot;\tilde{\varphi},T)\|_{L^2(0,T;L^2(\Omega))}\\
    &\leq&T^{\frac{1}{2}}\|\psi(\cdot;\hat{\varphi}_{T_{\varepsilon_n}}^{\varepsilon_n},0)-\psi(\cdot;
    \tilde{\varphi},0)\|_{L^2(0,T+\delta;L^2(\Omega))}\to 0\;\;\mbox{as}\;\;n\to\infty
\end{eqnarray*}
    and
\begin{eqnarray*}
    &\;&|\langle y_0,\varphi(0;\hat{\varphi}_{T_{\varepsilon_n}}^{\varepsilon_n},T)
    -\varphi(0;\tilde{\varphi},T)\rangle_\Omega|\nonumber\\
    &\leq&\|y_0\|_\Omega\|\varphi(0,\hat{\varphi}_{T_{\varepsilon_n}}^{\varepsilon_n},T)
    -\varphi(0;\tilde{\varphi},T)\|_\Omega\\
    &=&\|y_0\|_\Omega\|\psi(T;\hat{\varphi}_{T_{\varepsilon_n}}^{\varepsilon_n},0)
    -\psi(T;\tilde{\varphi},0)\|_\Omega\to 0\;\;\mbox{as}\;\;n\to \infty.
\end{eqnarray*}
    These, along with (\ref{yu1.18}), (\ref{yu1.41}), (\ref{yu-bu-29})
    and the weakly lower
    semi-continuity of $L^2$-norm, yield
\begin{eqnarray}\label{138}
    J^T(\tilde{\varphi})&\leq&\liminf_{n\to\infty}J^T
    (\hat{\varphi}_{T_{\varepsilon_n}}^{\varepsilon_n})
    \leq
    \liminf_{n\to\infty}[J^{T_{\varepsilon_n}}_{\varepsilon_n}
    (\hat{\varphi}_{T_{\varepsilon_n}}^{\varepsilon_n})+I_n]\nonumber\\
    &\leq&\liminf_{n\to \infty}
    J^{T_{\varepsilon_n}}_{\varepsilon_n}(\hat{\varphi}_{T_{\varepsilon_n}}
    ^{\varepsilon_n})+\limsup_{n\to\infty}
    I_{n}\nonumber\\
    &\leq&\liminf_{n\to \infty}
    J^{T_{\varepsilon_n}}_{\varepsilon_n}(\hat{\varphi}_T)=J^T(\hat{\varphi}_T).
\end{eqnarray}
    Thus,
    $\tilde{\varphi}$ is also a minimizer of problem (\ref{101}).
    By the uniqueness of minimizer of this problem,
    $\tilde{\varphi}=\hat{\varphi}_T$, i.e.,  (\ref{wang1.18}) holds. Consequently,  (\ref{yu1.19}) holds.
\par
    Next, we will prove that
\begin{equation}\label{139}
     \lim_{\varepsilon\to
    0^+}J^{T_{\varepsilon}}_\varepsilon(\hat{\varphi}_{T_\varepsilon}^\varepsilon)
    =\lim_{\varepsilon\to
    0^+}J^T(\hat{\varphi}_{T_\varepsilon}^\varepsilon)=
    J^T(\hat{\varphi}_T).
\end{equation}
    From the optimality of
    $\hat{\varphi}_{T_\varepsilon}^\varepsilon$ to the problem  $(MP^\varepsilon)$ (see (\ref{106-1})),
    we have
\begin{eqnarray}\label{142}
    J^{T_\varepsilon}_\varepsilon(\hat{\varphi}_{T_\varepsilon}^\varepsilon)&\leq&
    J^{T_\varepsilon}_\varepsilon(\hat{\varphi}_T)\leq J^T(\hat{\varphi}_T)+|\langle
    y_0,\varphi^\varepsilon(0;\hat{\varphi}_T,T_\varepsilon)-\varphi
    (0;\hat{\varphi}_T,T)\rangle_\Omega|\nonumber\\
    &\;&+\frac{1}{2}\left|\left(\int_0^{T_\varepsilon}\|
    \varphi^\varepsilon(t;\hat{\varphi}_T,T_\varepsilon)\|_{\omega}dt\right)^2
    -\left(\int_0^T\|\varphi(t;\hat{\varphi}_T,T)\|_{\omega}dt\right)^2\right|.
\end{eqnarray}
   From this, we can use the similar methods  used  in the proofs of (\ref{136}) and (\ref{137}) to get
\begin{equation}\label{143}
    \limsup_{\varepsilon\to
    0^+}J^{T_\varepsilon}_\varepsilon(\hat{\varphi}_{T_\varepsilon}^\varepsilon)
    \leq\limsup_{\varepsilon\to
    0^+}J^{T_\varepsilon}_\varepsilon(\hat{\varphi}_{T_\varepsilon})\leq
    J^T(\hat{\varphi}_T).
\end{equation}
    On the other hand,  one can easily check that
\begin{eqnarray}\label{yu-bu-16}
    J^T(\hat{\varphi}_T)&\leq&
    J^T(\hat{\varphi}_{T_\varepsilon}^\varepsilon)\leq
    J^{T_\varepsilon}_{\varepsilon}(\hat{\varphi}_{T_\varepsilon}^\varepsilon)
    +|\langle
    y_0,\varphi^\varepsilon(0;\hat{\varphi}^\varepsilon_{T_\varepsilon},T_\varepsilon)
    -\varphi(0;\hat{\varphi}_{T_\varepsilon}^\varepsilon,T)\rangle_\Omega|\nonumber\\
    &\;&+\frac{1}{2}\left|\left(\int_0^{T_\varepsilon}
    \|\varphi^\varepsilon(t;\hat{\varphi}_{T_\varepsilon}^\varepsilon,
    T_\varepsilon)\|_{\omega}dt
    \right)^2
    -\left(\int_0^T\|\varphi(t;\hat{\varphi}_{T_\varepsilon}^\varepsilon,T)\|
    _{\omega}dt\right)^2\right|.
\end{eqnarray}
    From this, we also can use the very similar ways as those used in the proofs of
    (\ref{136}) and (\ref{137}) to derive that
\begin{equation}\label{144}
    J^T(\hat{\varphi}_T)\leq
    \liminf_{\varepsilon\to0^+}J^T(\hat{\varphi}_{T_{\varepsilon}}^\varepsilon)\leq
    \liminf_{\varepsilon\to
    0^+}J^{T_\varepsilon}_\varepsilon(\hat{\varphi}^\varepsilon_{T_\varepsilon}).
\end{equation}
    This, together with (\ref{143}), yields
\begin{equation}\label{yu-bu-17}
    J^T(\hat{\varphi}_T)=\lim_{\varepsilon\to 0^+}J^{T_\varepsilon}_\varepsilon(\hat{\varphi}_{T_\varepsilon}^\varepsilon).
\end{equation}
    Hence, (\ref{139}) follows (\ref{yu-bu-16}), (\ref{yu-bu-17}).

    Finally, by  (\ref{yu1.19}) and the compactness of $S(t)$, for any sequence $\{\varepsilon_n\}_{n\in\mathbb{N}}\subset\{\varepsilon\}
    _{\varepsilon\in(0,\varepsilon_0]}$ with $\varepsilon_n\to 0^+$ as $n\to \infty$,
     there exists a subsequence $\{\varepsilon_{n_k}\}_{k\in\mathbb{N}}$ of $\{\varepsilon_n\}_{n\in\mathbb{N}}$ such that
   \begin{eqnarray*}
    &\;&\|\varphi(\cdot;\hat{\varphi}^{\varepsilon_{n_k}}_{T_{\varepsilon_{n_k}}},T)
    -\varphi(\cdot;\hat{\varphi}_T,T)\|_{L^2(0,T;L^2(\Omega))}\to
    0~~\text{as}~k\to \infty
\end{eqnarray*}
    and
$$
    \|\varphi(0;\hat{\varphi}^{\varepsilon_{n_k}}_{T_{\varepsilon_{n_k}}},T)
    -\varphi(0;\hat{\varphi}_T,T)\|_\Omega\to 0~~\text{as}~k\to \infty.
$$
     These imply that
$$
    \langle
    y_0,\varphi(0;\hat{\varphi}_{T_{\varepsilon_{n_k}}}^{\varepsilon_{n_k}},T)\rangle_\Omega\to
    \langle
    y_0,\varphi(0;\hat{\varphi}_T,T)\rangle_\Omega~~\text{as}~k\to
    \infty
$$
    and
$$
    \int_0^T\|\varphi(t;\hat{\varphi}_{T_{\varepsilon_{n_k}}}^{\varepsilon_{n_k}},T)\|_{\omega}dt\to
    \int_0^T\|\varphi(t;\hat{\varphi}_T,T)\|_{\omega}dt~~\text{as}~k\to
    \infty.
$$
   Since $\{\varepsilon_n\}_{n\in\mathbb{N}}$ was arbitrarily taken, we have
$$
    \langle
    y_0,\varphi(0;\hat{\varphi}_{T_\varepsilon}^\varepsilon,T)\rangle_\Omega\to
    \langle
    y_0,\varphi(0;\hat{\varphi}_T,T)\rangle_\Omega~~\text{as}~\varepsilon\to
    0^+
$$
     and
$$
    \int_0^T\|\varphi(t;\hat{\varphi}_{T_\varepsilon}^\varepsilon,T)\|_{\omega}dt\to
    \int_0^T\|\varphi(t;\hat{\varphi}_T,T)\|_{\omega}dt~~\text{as}~\varepsilon\to
    0^+.
$$
    These, along with  (\ref{139}) and the definitions of $J^T(\hat{\varphi}_T)$ and $J^{T}(\hat{\varphi}_{T_\varepsilon}^\varepsilon)$ (see (\ref{101-1})), indicate
$$
    \|\hat{\varphi}_{T_\varepsilon}^\varepsilon\|_\Omega\to
    \|\hat{\varphi}_T\|_\Omega~~\text{as}~\varepsilon\to 0^+.
$$
   Together with  (\ref{yu1.19}), this yields
\[
    \|\hat{\varphi}_{T_\varepsilon}^\varepsilon-\hat{\varphi}_T\|_\Omega\to
    0~~\text{as}~\varepsilon\to 0^+,
\]
    and completes the proof.
\end{proof}

Now we define a functional $ J_\varepsilon^T(\cdot)$ over $L^2(\Omega)$ by setting
  \begin{eqnarray}\label{w-1-1}
    J_\varepsilon^T(\varphi_T^\varepsilon)&=&\frac{1}{2}\left
    (\int_0^T\|\varphi^\varepsilon(t;\varphi_T^\varepsilon,T)\|_\omega dt\right)^2+\langle
    y_0,\varphi^\varepsilon(0;\varphi_T^\varepsilon,T)\rangle_\Omega\nonumber\\
    &\;&+K\|\varphi_T^\varepsilon\|_\Omega,\;\;\;\varphi_T^\varepsilon\in L^2(\Omega),
\end{eqnarray}
  where $\varphi^\varepsilon(\cdot;\varphi_{T}^\varepsilon,T)$ is the solution of the equation
\begin{equation}\label{w-2-1}
\begin{cases}
    \varphi^\varepsilon_t+\triangle\varphi^\varepsilon
    +a_\varepsilon\varphi^\varepsilon=0
    &\mbox{in}\;\;\Omega\times(0,T),\\
    \varphi^\varepsilon=0&\mbox{on}\;\;\partial\Omega\times(0,T),\\
    \varphi^\varepsilon(T)=\varphi^\varepsilon_T\in L^2(\Omega).
\end{cases}
\end{equation}

  By the same way used to prove Lemma \ref{lemmawang1}, we can show that $J_\varepsilon^T(\cdot)$, with $\varepsilon>0$ sufficiently small, has a unique non-zero minimizer in $L^2(\Omega)$.
  Further, by the same way as that used in the proof of Theorem~\ref{theoremyu1.1}, one can have the following consequence.

  \begin{corollary}\label{corollaryyu1.1-1}
    Suppose that $(H_1)$ and $(H_5)$ hold. Let $\hat{\varphi}_T^\varepsilon$ be the non-zero
    minimizer of the  functional $ J_\varepsilon^T(\cdot)$ (with $\varepsilon>0$ sufficiently small) defined by (\ref{w-1-1}), and let $\hat{\varphi}_T$  be the minimizer of the functional $J^T(\cdot)$ defined by
    (\ref{101-1}).   Then $\hat{\varphi}^\varepsilon_T \to \hat{\varphi}_T$ strongly in $L^2(\Omega)$ as $\varepsilon\to 0^+$, where $\hat{\varphi}_T$ is the  minimizer of the functional $J^T(\cdot)$ defined by
    (\ref{101-1}).
\end{corollary}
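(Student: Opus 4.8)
The plan is to observe that Corollary~\ref{corollaryyu1.1-1} is nothing but the special case of Theorem~\ref{theoremyu1.1} obtained by freezing the perturbed horizon, i.e.\ by taking $T_\varepsilon\equiv T$ for every $\varepsilon$. Under this choice the functional $J^{T_\varepsilon}_\varepsilon(\cdot)$ of (\ref{102-1}) becomes exactly $J^T_\varepsilon(\cdot)$ of (\ref{w-1-1}), its non-zero minimizer $\hat\varphi^\varepsilon_{T_\varepsilon}$ becomes $\hat\varphi^\varepsilon_T$, and hypothesis $(H_4)$ is automatically satisfied because $|T_\varepsilon-T|\equiv 0$. Consequently $(H_1)$ and $(H_5)$ alone suffice, and the desired strong convergence $\hat\varphi^\varepsilon_T\to\hat\varphi_T$ in $L^2(\Omega)$ follows by applying Theorem~\ref{theoremyu1.1} verbatim. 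Below I indicate how each stage of that proof specializes, noting that the fixed-horizon case is strictly lighter.

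First I would reproduce the uniform bound (\ref{wang1.14}): from $J^T_\varepsilon(\hat\varphi^\varepsilon_T)\le 0$ and the observability estimate of Proposition 3.2 in \cite{b1} (with $T_\varepsilon$ replaced by the fixed $T$), the coefficient $K\|\hat\varphi^\varepsilon_T\|_\Omega$ is dominated by the square of the observation integral plus $|\langle y_0,\varphi^\varepsilon(0;\hat\varphi^\varepsilon_T,T)\rangle_\Omega|$, which yields $\|\hat\varphi^\varepsilon_T\|_\Omega\le C$ uniformly in small $\varepsilon$. This allows me to extract, from any sequence $\varepsilon_n\to 0^+$, a subsequence with $\hat\varphi^{\varepsilon_n}_T\to\tilde\varphi$ weakly in $L^2(\Omega)$.

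Next I would identify $\tilde\varphi=\hat\varphi_T$. The two convergences (\ref{136}) and (\ref{137}) are still the engine of the argument, but with $T_\varepsilon=T$ they simplify markedly: the time-splitting $B_n\le E_n+F_n$ of (\ref{yu1.32}) collapses because the outer integral $F_n$ runs over the empty interval $[T\wedge T,\,T\vee T]$, and the term $E^2_n$ of (\ref{yu1.33}), which compares $S(T_{\varepsilon_n}-t)$ with $S(T-t)$, vanishes identically. Only the genuine potential-perturbation term $E^1_n$ survives, and it tends to $0$ by (\ref{yu1.27}), the $T_\varepsilon\equiv T$ instance of (\ref{b11}). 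Combining weak lower semicontinuity of the $L^2$-norm with the convergence $J^T_\varepsilon(\hat\varphi_T)\to J^T(\hat\varphi_T)$ (the analogue of (\ref{yu-bu-29})) and the optimality of $\hat\varphi^\varepsilon_T$ gives $J^T(\tilde\varphi)\le J^T(\hat\varphi_T)$, whence $\tilde\varphi=\hat\varphi_T$ by the uniqueness in Lemma~\ref{lemmawang1}; by arbitrariness of the sequence, $\hat\varphi^\varepsilon_T\to\hat\varphi_T$ weakly.

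Finally I would upgrade weak to strong convergence exactly as in the closing part of the proof of Theorem~\ref{theoremyu1.1}: squeezing between the analogues of (\ref{143}) and (\ref{144}) gives the energy convergence $J^T_\varepsilon(\hat\varphi^\varepsilon_T)\to J^T(\hat\varphi_T)$, and the compactness of $S(\cdot)$ lets me pass both the observation integral $\int_0^T\|\varphi(t;\hat\varphi^\varepsilon_T,T)\|_\omega\,dt$ and the term $\langle y_0,\varphi(0;\hat\varphi^\varepsilon_T,T)\rangle_\Omega$ to their limits; reading off the definition of $J^T$ then forces $\|\hat\varphi^\varepsilon_T\|_\Omega\to\|\hat\varphi_T\|_\Omega$, and norm convergence together with weak convergence in the Hilbert space $L^2(\Omega)$ yields strong convergence. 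I do not anticipate any genuine obstacle: the sole delicate point in Theorem~\ref{theoremyu1.1}, namely reconciling the mismatched horizons $T_\varepsilon$ and $T$ in the integrals, is simply absent here, so the whole argument is a direct and lighter transcription.
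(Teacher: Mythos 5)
Your proposal is correct and coincides with the paper's own treatment: the paper proves Corollary~\ref{corollaryyu1.1-1} precisely by observing that it is the $T_\varepsilon\equiv T$ instance of Theorem~\ref{theoremyu1.1} (for which $(H_4)$ holds trivially), so the argument carries over verbatim with the horizon-mismatch terms absent. Your step-by-step account of which terms vanish is a faithful, slightly more detailed transcription of the same route.
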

\begin{remark}\label{ireyu2.1}
    Let $M_\varepsilon$ be given by  (\ref{yub2.01}) and $\hat{\varphi}_{T^*}$ be the minimizer of the functional defined by (\ref{101}). Then, it follows from Corollary~\ref{corollaryyu1.1-1} that
    \begin{equation}\label{yub2.02}
    M_\varepsilon\to \int_0^{T^*}\|\varphi(t;\hat{\varphi}_{T^*},T^*)
    \|_\omega dt\;\;\mbox{as}\;\;\varepsilon\to 0^+.
\end{equation}
\end{remark}

\section{The equivalence of minimal time and  norm  control problems}
\par
  Throughout this section, we let $T^*$  and $T^{*,1}_\varepsilon$ be accordingly
    the optimal time
    to Problems $(TP)$ and $(TP^\varepsilon_1)$.
    Define the following three admissible sets of controls:
\vskip 5pt
    \noindent$\mathcal{F}_{T^*}\equiv\{f\in
    L^\infty(\mathbb{R}^+;L^2(\Omega)):y(T^*;f,y_0)\in\overline{B_K(0)}\};$
\vskip 5pt
    \noindent$\mathcal{F}^\varepsilon_{T^*}\equiv\{f\in
    L^\infty(\mathbb{R}^+;L^2(\Omega)):y^\varepsilon
    (T^*;f,y_0)\in\overline{B_K(0)}\}$;
\vskip 5pt
    \noindent$\mathcal{F}^\varepsilon_{T^{*,1}_\varepsilon}\equiv\{f\in
    L^\infty(\mathbb{R}^+;L^2(\Omega)):y^\varepsilon
    (T^{*,1}_\varepsilon;f,y_0)\in\overline{B_K(0)}\}.$
\vskip 5pt
    \noindent Here,  $y(\cdot;f,y_0)$ and  $y^\varepsilon(\cdot;f,y_0)$ are accordingly the solutions of  the equations (\ref{equationyu2.01}) and  (\ref{equationyu2.02}), with  $u$  replaced by $f$.
    As a consequence of the
    approximate or null controllability over any interval $(0,T)$ with $T>0$ for linear parabolic  equations (see  \cite{b11, b6, b1, b12, b9, b3} and references therein),
    the admissible sets $\mathcal{F}_{T^*}$,
    $\mathcal{F}^\varepsilon_{T^*}$ and
    $\mathcal{F}^\varepsilon_{T^{*,1}_\varepsilon}$
    are  nonempty.
     We consider three minimal norm  control problems as follows:
\vskip 5pt
    \noindent$(NP_{T^*}) \;\;\;\;
    M_{T^*}\equiv \inf_{f\in\mathcal{F}_{T^*}}\{\|f\|_{L^\infty(0,T^*;L^2(\Omega))}\};$
\vskip 5pt
    \noindent$(NP_{T^*}^\varepsilon)\;\;
    \;\; M^\varepsilon_{T^*}
    \equiv \inf_{f\in\mathcal{F}^\varepsilon_{T^*}}
    \{\|f\|_{L^\infty(0,T^*;L^2(\Omega))}\}$;
\vskip 5pt
    \noindent$(NP^\varepsilon_{T^{*,1}_\varepsilon})\;
    \;\;\;M^\varepsilon_{T^{*,1}_\varepsilon}
    \equiv\inf_{f\in\mathcal{F}^\varepsilon_{T^{*,1}_\varepsilon}}
    \{\|f\|_{L^\infty(0,T^{*,1}_\varepsilon;L^2(\Omega))}\}.$
\vskip 5pt
\noindent The numbers  $M_{T^*}$, $M^\varepsilon_{T^*}$ and
    $M_{T^{*,1}_\varepsilon}^\varepsilon$ are called the minimal norms (or the optimal norms) to Problems  $(NP_{T^*})$, $(NP^\varepsilon_{T^*})$ and
    $(NP^\varepsilon_{T^{*,1}_\varepsilon})$ respectively. A control  $f_{T^*}\in\mathcal{F}_{T^*}$ is called an optimal control to  $(NP_{T^*})$ if $\|f_{T^*}\|_{L^\infty(0,T^*;L^2(\Omega))}= M_{T^*}$, and $f_{T^*}(\cdot)=0$ over $[T^*, +\infty)$. Similarly, we can define optimal controls
   $f^\varepsilon_{T^*}$ and $f^\varepsilon_{T^{*,1}_\varepsilon}$ to $(NP^\varepsilon_{T^*})$ and
    $(NP^\varepsilon_{T^{*,1}_\varepsilon})$ respectively.

\par
    Now, we define a new time optimal control problem:
\vskip5pt
    \noindent$(\overline{TP^\varepsilon})$ \;\;$\overline{T^*_\varepsilon}\equiv\inf_{u\in\mathcal{U}_{M^\varepsilon
    _{T^*}}^\varepsilon}
    \{t\in\mathbb{R}^+; y^\varepsilon(t;u,y_0)\in\overline{B_K(0)}\}$,
\vskip5pt
    \noindent where
\vskip5pt
    \noindent$\mathcal{U}^\varepsilon_{M_{T^*}^\varepsilon}\equiv\{u\in L^\infty(\mathbb{R}^+;L^2(\Omega)): u(\cdot)\in\overline{B_{M_{T^*}^\varepsilon}(0)}\;
    \mbox{over}\;\mathbb{R}^+\;\mbox{and}\;\exists t>0\;\mbox{s.t.}\;y^\varepsilon(t;u,
    y_0)\in\overline{B_K(0)}\}$.
\vskip5pt
   \noindent  By a very similar way used to prove  Proposition \ref{lemmayu3.1}, we can have that the problem
    $(\overline{TP^\varepsilon})$, with $\varepsilon\in (0,\varepsilon_\rho]$, has a unique optimal control. Moreover, this control also has the bang-bang property.

\par

  By the almost same way as that used in the proof of Theorem 1.1 in \cite{b4}, we can have the following  results.

\begin{proposition}\label{lemmayu4.2}
      Suppose that
    $(H_1)$, $(H_2)$ and $(H_3)$ hold. Let $\varepsilon_\rho$ verify (\ref{yu-b-2}). Then $(i)$ the  problems  $(TP)$ and $(NP_{T^*})$ share the same optimal control; $(ii)$ when $\varepsilon\in (0, \varepsilon_\rho]$,
    the problems $(TP^\varepsilon_1)$ and $(NP^\varepsilon_{T^{*,1}_\varepsilon})$ have the same optimal control; $(iii)$  when $\varepsilon\in (0, \varepsilon_\rho]$, the problems $(\overline{TP^\varepsilon})$ and $(NP^\varepsilon_{T^*})$ also share the some optimal control; $(iv)$ Problems $(NP_{T^*})$, $(NP_{T^*}^\varepsilon)$
    and $(NP_{T^{*,1}_\varepsilon}^\varepsilon)$, with  $\varepsilon\in (0, \varepsilon_\rho]$, have the bang-bang property and the unique optimal controls.
    \end{proposition}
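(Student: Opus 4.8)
The plan is to follow the strategy of Wang and Zuazua in \cite{b4}, organizing the argument around the variational duality that links each minimal norm problem to the minimization of the corresponding functional $J$ introduced in Section~2. The backbone is an elementary but decisive comparison of admissible sets: a control that is optimal for a minimal norm problem at a prescribed time is automatically admissible for the matching time optimal problem, and conversely. Since the time optimal controls are already known to be unique and bang-bang (Proposition~\ref{lemmayu3.1} and \cite{b10}), the coincidence of optimal controls will be forced by uniqueness, and the bang-bang property together with the uniqueness for the norm problems in $(iv)$ will drop out as a byproduct rather than being proved from scratch.

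First I would record the two structural facts about $(NP_{T^*})$, $(NP^\varepsilon_{T^*})$ and $(NP^\varepsilon_{T^{*,1}_\varepsilon})$ supplied by the duality of Section~2 (see \cite{b3,b6}): each admits an optimal control, represented through the non-zero minimizer of the associated functional, e.g. $f_{T^*}(t)=M_{T^*}\,\chi_\omega\varphi(t;\hat{\varphi}_{T^*},T^*)/\|\varphi(t;\hat{\varphi}_{T^*},T^*)\|_\omega$ with $M_{T^*}=\int_0^{T^*}\|\varphi(t;\hat{\varphi}_{T^*},T^*)\|_\omega\,dt$. Unique continuation for the adjoint equation gives $\varphi(\cdot;\hat{\varphi}_{T^*},T^*)\neq 0$ for a.e.\ $t$, so $\|f_{T^*}(t)\|_\Omega\equiv M_{T^*}$; that is, the optimal norm control has constant norm. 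I would also note that an optimal norm control steers $y_0$ exactly to the boundary $\{\|\cdot\|_\Omega=K\}$ of the target, since otherwise a small downward scaling $\lambda f_{T^*}$ with $\lambda<1$ would still land in $\overline{B_K(0)}$ while beating the minimal norm, a contradiction; this same scaling idea reappears in $(iii)$.

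For $(i)$ and $(ii)$ the argument is symmetric, so consider $(i)$. Because $u^*$ reaches $\overline{B_K(0)}$ at $T^*$ and satisfies $\|u^*(\cdot)\|_\Omega\le M$, it is admissible for $(NP_{T^*})$, whence $M_{T^*}\le M$. Conversely, any optimal control $f_{T^*}$ of $(NP_{T^*})$ obeys $\|f_{T^*}(\cdot)\|_\Omega\le M_{T^*}\le M$ and vanishes past $T^*$, so it is admissible for $(TP)$; since it reaches the target at $T^*$, its reaching time must be exactly the optimal time $T^*$, so $f_{T^*}$ is a time optimal control, and the uniqueness in Proposition~\ref{lemmayu3.1} forces $f_{T^*}=u^*$. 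Thus the two problems share the same optimal control and, incidentally, $M_{T^*}=\|u^*\|_{L^\infty}=M$. Part $(ii)$ is identical with $(TP)$, $T^*$, $S(\cdot)$ replaced by $(TP^\varepsilon_1)$, $T^{*,1}_\varepsilon$, $S^\varepsilon(\cdot)$, valid for $\varepsilon\in(0,\varepsilon_\rho]$.

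Part $(iii)$ runs the same comparison in the reverse order and is where the only genuine difficulty lies. The optimal control $f^\varepsilon_{T^*}$ of $(NP^\varepsilon_{T^*})$ has constant norm $M^\varepsilon_{T^*}$ and reaches the target at $T^*$, hence is admissible for $(\overline{TP^\varepsilon})$ and yields $\overline{T^*_\varepsilon}\le T^*$. The reverse inequality is the crux: if $\overline{T^*_\varepsilon}<T^*$, an optimal control $\bar u$ of $(\overline{TP^\varepsilon})$ with $\|\bar u(\cdot)\|_\Omega\le M^\varepsilon_{T^*}$ would reach $\overline{B_K(0)}$ at $\overline{T^*_\varepsilon}$; extending $\bar u$ by zero, the strict contraction $\|S^\varepsilon(T^*-\overline{T^*_\varepsilon})\|\le e^{-\hat{\delta}(T^*-\overline{T^*_\varepsilon})}<1$ guaranteed by $(H_3)$ and (\ref{yu-b-2}) places the state \emph{strictly inside} the target at $T^*$, so a downward scaling $\lambda\bar u$ with $\lambda<1$ close to $1$ still reaches the target at $T^*$ with norm $\lambda M^\varepsilon_{T^*}<M^\varepsilon_{T^*}$ (using $M^\varepsilon_{T^*}>0$, i.e.\ $\hat{\varphi}^\varepsilon_{T^*}\neq0$ from Section~2), contradicting the minimality of $M^\varepsilon_{T^*}$. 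Hence $\overline{T^*_\varepsilon}=T^*$, $f^\varepsilon_{T^*}$ is time optimal for $(\overline{TP^\varepsilon})$, and uniqueness identifies it with the optimal control of $(\overline{TP^\varepsilon})$. Finally, $(iv)$ follows at once: by $(i)$–$(iii)$ the optimal control of each norm problem equals the corresponding unique, bang-bang time optimal control, hence is itself unique and bang-bang. The main obstacle is precisely the strict-monotonicity-in-time step just described for $(iii)$; everything else is a bookkeeping of admissibility together with the uniqueness and bang-bang facts already in hand.
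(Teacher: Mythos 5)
Your proposal is correct, and it is essentially the argument the paper intends: the paper itself gives no details here, merely listing the three ingredients (existence of time optimal controls, null controllability, bang--bang property) and deferring to the proof of Theorem~1.1 in \cite{b4}, so your write-up is a faithful reconstruction of that omitted argument. Your treatment of $(i)$--$(ii)$ is in fact slightly more streamlined than the Wang--Zuazua template: because the paper \emph{defines} an optimal control of $(TP)$ as any $u\in\mathcal{U}_M$ with $y(T^*;u,y_0)\in\overline{B_K(0)}$ vanishing after $T^*$, the inequality $M_{T^*}\le M$ plus this definition already makes the norm-optimal control a time optimal control, and uniqueness (from the bang--bang property, Proposition~\ref{lemmayu3.1}) closes the loop without invoking the continuity and strict monotonicity of the minimal-norm function $T\mapsto M_T$ that \cite{b4} uses; the identity $M_{T^*}=M$ then falls out rather than being the pivot. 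For $(iii)$ your scaling-plus-decay contradiction is exactly the mechanism the paper relies on (compare the proof of part $(i)$ of Theorem~\ref{ithyu1.2}, where the same estimate $\|S^\varepsilon(T^*-\overline{T^*_\varepsilon})\|\le e^{-\hat\delta(T^*-\overline{T^*_\varepsilon})}<1$ appears). The one point deserving explicit care, which you correctly flag, is the non-degeneracy $M^\varepsilon_{T^*}>0$ (equivalently $\hat\varphi^\varepsilon_{T^*}\neq 0$): this is only guaranteed for $\varepsilon$ small (the paper's $\varepsilon_\delta$ from Proposition~\ref{lemmayu4.1}), whereas the statement is phrased for $\varepsilon\in(0,\varepsilon_\rho]$ --- a mismatch present in the paper itself and not a defect of your argument.
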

\begin{proof} Since $(a)$ the problems  $(TP)$, $(TP^\varepsilon_1)$ and $(\overline{TP^\varepsilon})$ have optimal controls  (see Proposition \ref{lemmayu3.1} in our paper and Theorem 3.2 in \cite{b7}); $(b)$ the controlled equations (\ref{equationyu2.01}) and (\ref{equationyu2.02}) have the null controllability property (see \cite{b1} and references therein); and $(c)$  the problems  $(TP)$, $(TP^\varepsilon_1)$ and $(\overline{TP^\varepsilon})$ have the bang-bang property (see  Theorem 1 and Remark in and at the end of \cite{b10}), we can follow  the exactly same way as that used to prove Theorem 1.1 in \cite{b4} to  show the equivalence of minimal norm and minimal time controls stated in $(i)$, $(ii)$ and $(iii)$. We omit the detail here.

\par
   The uniqueness for the optimal controls to $(TP)$, $(TP^\varepsilon_1)$ and $(\overline{TP^\varepsilon})$ is a direct consequence of  the corresponding bang-bang property
        (see Theorem 2.1.7 on page 36 in \cite{b24} or Theorem 1.2 in \cite{b9}). Finally, the results in $(iv)$ follows at once from $(i)$, $(ii)$, $(iii)$ and the uniqueness and the bang-bang property of  the optimal controls to Problems $(TP)$, $(TP^\varepsilon_1)$ and $(\overline{TP^\varepsilon})$.  This completes the proof.
       \end{proof}

     We now  study some characteristics of the optimal controls to the problems   $(NP_{T^*})$, $(NP_{T^*}^\varepsilon)$ and $(NP^\varepsilon_{T^{*,1}_\varepsilon})$. These properties, together with
    Proposition~\ref{lemmayu4.2}, give us the corresponding characteristics for the optimal controls to the problems   $(TP)$, $(\overline{TP^\varepsilon})$ and $(TP^\varepsilon_1)$ with sufficiently small $\varepsilon$. The later will be the key in the proof of the $L^\infty$-convergence of the optimal controls stated in Theorem~\ref{ithyu1.1}. To show the above-mentioned characteristics, we will first prove the following lemma which is indeed the part $(i)$ in Theorem~\ref{ithyu1.1}.

   \begin{lemma}\label{wanglemma31}
   Under the assumptions $(H_1)$, $(H_2)$ and $(H_3)$, it holds that $\lim_{\varepsilon\rightarrow 0^+}T_\varepsilon^{*,1}=T^*$.
   \end{lemma}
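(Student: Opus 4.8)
The plan is to establish the two one-sided bounds $\limsup_{\varepsilon\to 0^+}T^{*,1}_\varepsilon\le T^*$ and $\liminf_{\varepsilon\to 0^+}T^{*,1}_\varepsilon\ge T^*$ separately.

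For the upper bound I would use the optimal control $u^*$ of $(TP)$, which satisfies $\|u^*(t)\|_\Omega\le M$ and $\|y(T^*;u^*,y_0)\|_\Omega\le K$, as a near-optimal control for the perturbed problem. Applying $u^*$ to the perturbed system and invoking (\ref{yu2.03}) gives $\rho(\varepsilon):=\|y^\varepsilon(T^*;u^*,y_0)-y(T^*;u^*,y_0)\|_\Omega\to 0$, so $\|y^\varepsilon(T^*;u^*,y_0)\|_\Omega\le K+\rho(\varepsilon)$. Then I would switch the control off after $T^*$ and let the free perturbed dynamics contract: by (\ref{yu-b-2}), for $\varepsilon\in(0,\varepsilon_\rho]$ one has $\|y^\varepsilon(T^*+s;\tilde u,y_0)\|_\Omega\le e^{-\hat\delta s}(K+\rho(\varepsilon))$, where $\tilde u$ equals $u^*$ on $[0,T^*]$ and $0$ afterwards. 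Choosing $s(\varepsilon)=\hat\delta^{-1}\ln(1+\rho(\varepsilon)/K)$ makes the right-hand side equal to $K$, so $\tilde u\in\mathcal{U}^\varepsilon_M$ steers $y_0$ into $\overline{B_K(0)}$ by time $T^*+s(\varepsilon)$, whence $T^{*,1}_\varepsilon\le T^*+s(\varepsilon)$. Since $s(\varepsilon)\to 0$, the upper bound follows.

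For the lower bound I would argue by contradiction: suppose along some sequence $\varepsilon_n\to 0^+$ one has $T^{*,1}_{\varepsilon_n}\to L<T^*$. The associated optimal controls $u_n:=u^{*,1}_{\varepsilon_n}$ are uniformly bounded by $M$ in $L^\infty(\mathbb{R}^+;L^2(\Omega))$, so after passing to a subsequence $u_n\rightharpoonup\bar u$ weakly in $L^2(0,T;L^2(\Omega))$ for a fixed $T>L$, with $\|\bar u(t)\|_\Omega\le M$ a.e. (the constraint set is convex and strongly closed, hence weakly closed). I would then show that the terminal states converge strongly, $y^{\varepsilon_n}(T^{*,1}_{\varepsilon_n};u_n,y_0)\to y(L;\bar u,y_0)$ in $L^2(\Omega)$. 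Since each terminal state lies in the closed ball $\overline{B_K(0)}$, so does the limit, and $\bar u$ is admissible for $(TP)$; hence $T^*\le L$, contradicting $L<T^*$ (for $L=0$ this contradicts $(H_2)$ directly). Combining both bounds yields the claim.

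The main obstacle is the strong convergence of the terminal states, where three things move at once: the potential ($a_{\varepsilon_n}\to a$), the control (only weakly) and the terminal time ($T^{*,1}_{\varepsilon_n}\to L$). I would handle the potential via the Duhamel identity $S^{\varepsilon}(t)-S(t)=\int_0^t S^{\varepsilon}(t-s)(a_\varepsilon-a)S(s)\,ds$, which together with $(H_1)$ and the bounds (\ref{yu-b-1})--(\ref{yu-b-2}) gives $\sup_{\tau\in[0,T]}\|S^{\varepsilon_n}(\tau)-S(\tau)\|\to 0$; this controls the difference coming from the potential uniformly since $\|u_n\|\le M$. The weak-to-strong upgrade for the control is the crucial point: the input-to-state map $v\mapsto\int_0^t S(t-s)\chi_\omega v(s)\,ds$ is compact (write it as $S(\sigma)$ composed with a bounded operator plus an $O(\sigma)$ remainder and let $\sigma\to 0^+$, using compactness of $S(\sigma)$), so it sends the weakly convergent $u_n$ to a strongly convergent image. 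Finally, the moving terminal time is absorbed using the semigroup law $S(t_n-s)=S(t_n-L)S(L-s)$ together with strong continuity of $S(\cdot)$ at $0$ applied to the precompact family of partial integrals, and the fact that $u_n$ vanishes past its own optimal time; the leftover integral over the short interval between $L$ and $t_n$ is $O(|t_n-L|)$ by the uniform bound on $u_n$.
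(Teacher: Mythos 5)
Your proposal is correct, and its technical heart coincides with the paper's: the strong convergence of the terminal states when the potential, the control (only weakly convergent) and the terminal time all move simultaneously, handled by exactly the three-way splitting you describe (a Duhamel/Gronwall estimate for $a_{\varepsilon_n}\to a$, compactness of the input-to-state map to upgrade weak to strong convergence --- the paper cites Aubin's theorem where you give the standard $S(\sigma)$-factorization argument --- and an $O(|T^{*,1}_{\varepsilon_n}-L|)$ remainder plus strong continuity of $S(\cdot)$ for the moving endpoint; compare the paper's terms $L_n^1,\dots,L_n^4$). The outer structure differs. You split into $\limsup\le T^*$ and $\liminf\ge T^*$; your upper bound is a direct quantitative construction ($T^{*,1}_\varepsilon\le T^*+\hat\delta^{-1}\ln(1+\rho(\varepsilon)/K)$, which even yields a rate in terms of $\rho(\varepsilon)$), whereas the paper first proves boundedness of $\{T^{*,1}_\varepsilon\}$, extracts a subsequence with $T^{*,1}_{\varepsilon_n}\to\bar T$ and $u^{*,1}_{\varepsilon_n}\rightharpoonup\bar u$, shows $y(\bar T;\bar u,y_0)\in\overline{B_K(0)}$ (hence $\bar T\ge T^*$, your lower bound), and then rules out $\bar T>T^*$ by a contradiction that uses the same semigroup-decay idea as your upper bound, only wrapped inside the subsequence argument. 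One practical difference: the paper's route identifies the weak-star limit of $u^{*,1}_\varepsilon$ as $u^*$ along the way (its (\ref{WANGHUANG3.29})), which is then reused in the proof of part $(ii)$ of Theorem \ref{ithyu1.1}; your contradiction-based lower bound discards the limit control once the contradiction is reached, so to recover that byproduct you would rerun your compactness argument once more with $L=T^*$. Minor points to tidy in a full write-up: in the lower bound the case $L=0$ must be excluded via $(H_2)$ (as you note), and the moving-time estimate needs the $t_n\wedge L$ bookkeeping since $T^{*,1}_{\varepsilon_n}$ may approach $L$ from below; neither is a gap.
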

   \begin{proof}
    Let $\varepsilon_\rho$ verify (\ref{yu-b-2}).   The proof will be organized in
    four steps as
    follows.
\vskip 10 pt
 \noindent   \emph{Step 1. There exists an
    $\varepsilon_K\in(0,\varepsilon_\rho]$ such that
    $\{T^{*,1}_{\varepsilon}\}_{\varepsilon\in(0,\varepsilon_K]}$
    is a bounded set.}
\par
   Because of (\ref{yu-b-1}),  there exists a time $T_{\frac{K}{2}}>0$ such
    that $\|S(T_{\frac{K}{2}})y_0\|_{\Omega}\leq \frac{K}{2}$, i.e.,
    \begin{equation}\label{WANGYUANYUAN3.1}
    y(T_{\frac{K}{2}};0,y_0)\in \overline{B_{\frac{K}{2}}(0)}.
    \end{equation}
    Meanwhile, by (\ref{yu2.03}),  there
    exists an $\varepsilon_K\in (0, \varepsilon_\rho]$ such that
\begin{equation}\label{yu3.01}
    \|y^{\varepsilon}(T_{\frac{K}{2}};0,y_0)-y(T_{\frac{K}{2}};0,y_0)\|_{\Omega}\leq
    \frac{K}{2},\;\;\mbox{when}\;\;\varepsilon\in(0,\varepsilon_K].
\end{equation}
    This, along with (\ref {WANGYUANYUAN3.1}),
    yields
\begin{equation}\label{yu3.02}
    \|y^\varepsilon(T_{\frac{K}{2}};0,y_0)\|_{\Omega}\leq K,
    \;\;\mbox{when}\;\;\varepsilon\in(0,\varepsilon_K].
\end{equation}
    From this and the optimality of $T^{*,1}_\varepsilon$ to Problem $(TP^\varepsilon_1)$, we see that  $T^{*,1}_\varepsilon\leq T_{\frac{K}{2}}$, when
    $\varepsilon\in(0,\varepsilon_K]$.

\vskip 10pt
  \noindent  \emph{Step 2. Let
    $\{\varepsilon_n\}_{n\in\mathbb{N}}\subset\{\varepsilon\}_{\varepsilon\in
    (0,\varepsilon_K]}$ be such that $\varepsilon_n\to0^+$ as $n\to\infty$. Then there are
    a subsequence of $\{\varepsilon_n\}_{n\in\mathbb{N}}$, still denoted in the same way,
    a time $\bar{T}$ and a
    control $\bar u$, with $\bar{u}(t)\in\overline{B_M(0)}$ a.e. $t\in[0,T_{\frac{K}{2}}]$, such that
    $T^{*,1}_{\varepsilon_n}\to
    \bar{T}$ and
    $u^{*,1}_{\varepsilon_n}\to \bar{u}$ weakly star in
    $L^\infty(0,T_\frac{K}{2};L^2(\Omega))$  as
    $n\to \infty$, and $y(\bar{T};\bar{u},y_0)\in \overline{B_K(0)}$.}
\par
    By the conclusion of
    Step 1, there are a time $\bar T$ and  a subsequence of $\{\varepsilon_n\}_{n\in\mathbb{N}}$, still
    denoted in the same way, such that
\begin{equation}\label{yu3.03}
    T^{*,1}_{\varepsilon_n}\to \bar{T}\;\;\mbox{as}\;\;n\to\infty.
\end{equation}
    For each $n$, we let  $u^{*,1}_{\varepsilon_n}$ be
    the optimal controls to Problem $(TP_1^{\varepsilon_n})$.
     Since $\{u^{*,1}_{\varepsilon_n}\}_{n\in\mathbb{N}}$ is   bounded  in $L^\infty(0,T_{\frac{K}{2}}; L^2(\Omega))$,
    there exist an $\bar{u}\in L^\infty(0,T_{\frac{K}{2}}; L^2(\Omega))$ and a
    subsequence of the sequence $\{\varepsilon_n\}_{n\in\mathbb{N}}$, still
    denoted in the same way, such that
\begin{equation}\label{yu3.04}
    u^{*,1}_{\varepsilon_n}\to \bar{u}\;\;\mbox{weakly star in}\;\;
    L^\infty(0,T_{\frac{K}{2}};L^2(\Omega))\;\;\mbox{as}\;\;n\to\infty.
\end{equation}
    Moreover, there exists a $C>0$ such that
\begin{eqnarray}\label{yu3.05a}
    &\;&\|y(\cdot;u_{\varepsilon_n}^{*,1},y_0)\|_{C([0,T_{\frac{K}{2}}];
    L^2(\Omega))}\nonumber\\
    &\;&\;\;\;\;+\|y^{\varepsilon_n}(\cdot;u_{\varepsilon_n}^{*,1},y_0)
    \|_{C([0,T_{\frac{K}{2}}];L^2(\Omega))}\leq
    C,\;\;\forall n\in\mathbb{N}.
\end{eqnarray}
\par
    Next, we will prove that on a subsequence of $\{\varepsilon_n\}_{n\in\mathbb{N}}$, still denoted in the same way,
\begin{equation}\label{yu3.05}
    \|y^{\varepsilon_n}(T^{*,1}_{\varepsilon_n};u^{*,1}_{\varepsilon_n},y_0)
    -y(\bar{T};\bar{u},y_0)\|_{\Omega}\to
    0\;\;\mbox{as}\;\; n\to\infty.
\end{equation}
    To show (\ref{yu3.05}), we only need to show that on a subsequence of $\{\varepsilon_n\}_{n\in\mathbb{N}}$, still denoted in the same way,
\begin{equation}\label{yu3.06}
    \|y(\bar{T};u^{*,1}_{\varepsilon_n},y_0)-y(\bar{T};\bar{u},y_0)\|_{\Omega}\to
    0\;\;\mbox{as}\;\; n\to\infty,
\end{equation}
\begin{equation}\label{yu3.07}
    \|y(T^{*,1}_{\varepsilon_n};u^{*,1}_{\varepsilon_n},y_0)
    -y(\bar{T};u^{*,1}_{\varepsilon_n},y_0)\|_{\Omega}\to
    0\;\;\mbox{as}\;\; n\to\infty
\end{equation}
    and
\begin{equation}\label{yu3.08}
    \|y^{\varepsilon_n}(T^{*,1}_{\varepsilon_n};u^{*,1}_{\varepsilon_n},y_0)
    -y(T^{*,1}_{\varepsilon_n};u^{*,1}_{\varepsilon_n},y_0)\|_{\Omega}\to
    0\;\;\mbox{as}\;\; n\to\infty.
\end{equation}
\par
   The convergence (\ref{yu3.06}) follows from (\ref{yu3.04}) and Aubin's theorem (see Theorem 1.20 on page 26 in \cite{b2}).
\par
   Now we show (\ref{yu3.07}).
    Notice that
\begin{eqnarray}\label{yuevolution3.1}
    &\;&\|y(T^{*,1}_{\varepsilon_n};u^{*,1}_{\varepsilon_n},y_0)-
    y(\bar{T};u^{*,1}_{\varepsilon_n},y_0)\|_\Omega\nonumber\\
    &=&\|S(T^{*,1}_{\varepsilon_n})y_0-S(\bar{T})y_0\|_\Omega\nonumber\\
    &\;&+\left\|\int_0^{T^{*,1}_{\varepsilon_n}}
    S(T^{*,1}_{\varepsilon_n}-t)\chi_\omega u^{*,1}_{\varepsilon_n}(t)dt-
    \int_0^{\bar{T}}S(\bar{T}-t)\chi_\omega u^{*,1}_{\varepsilon_n}(t)dt
    \right\|_\Omega\nonumber\\
    &\leq&\|S(T^{*,1}_{\varepsilon_n})y_0-S(\bar{T})y_0\|_\Omega\nonumber\\
    &\;&+\left\|\int_0^{T^{*,1}_{\varepsilon_n}\wedge\bar{T}}S(T^{*,1}_{\varepsilon_n}-t)
    \chi_\omega u^{*,1}_{\varepsilon_n}(t)dt-\int_0^{T^{*,1}_{\varepsilon_n}\wedge\bar{T}}
    S(\bar{T}-t)\chi_\omega u^{*,1}_{\varepsilon_n}(t)dt\right\|_\Omega\nonumber\\
    &\;&+\left\|\int_{T^{*,1}_{\varepsilon_n}\wedge\bar{T}}^{T^{*,1}_{\varepsilon_n}}
    S(T^{*,1}_{\varepsilon_n}-t)\chi_\omega u^{*,1}_{\varepsilon_n}(t)dt\right\|_\Omega
    +\left\|\int_{T^{*,1}_{\varepsilon_n}\wedge\bar{T}}^{\bar{T}}
    S(\bar{T}-t)\chi_\omega u^{*,1}_{\varepsilon_n}(t)dt\right\|_\Omega\nonumber\\
    &\equiv& L_n^1+L_n^2+L_n^3+L_n^4.
\end{eqnarray}
    By the strong continuity of $S(\cdot)$
    and (\ref{yu3.03}), we have
\begin{equation}\label{yuevolution3.2}
    L_n^1=\|S(T^{*,1}_{\varepsilon_n})y_0-S(\bar{T})y_0\|_\Omega\to 0\;\;\mbox{as}\;\;n\to\infty.
\end{equation}
    Meanwhile, one can easily to check that
\begin{eqnarray}\label{yuevolution3.3}
    L_n^2&=& \left\|\int_0^{T^{*,1}_{\varepsilon_n}\wedge\bar{T}}[S(T^{*,1}_{\varepsilon_n}-t)
    -S(\bar{T}-t)]\chi_\omega u^{*,1}_{\varepsilon_n}(t) dt\right\|_\Omega\nonumber\\
    &\leq&\biggl\|\int_0^{T^{*,1}_{\varepsilon_n}\wedge \bar{T}}
    [S(T^{*,1}_{\varepsilon_n}-t)-S(\bar{T}-t)]
    \chi_\omega(u^{*,1}_{\varepsilon_n}(t)-\bar{u}(t)) dt\biggl\|_\Omega\nonumber\\
    &\;&+\int_0^{T^{*,1}_{\varepsilon_n}\wedge\bar{T}}
    \|[S(T^{*,1}_{\varepsilon_n}-t)-S(\bar{T}-t)]\chi_\omega
    \bar{u}(t)\|_\Omega dt\nonumber\\
    &\equiv&L_n^{2,1}+L_n^{2,2}.
\end{eqnarray}
    By the dominated convergence theorem, we can deduce that
\begin{equation}\label{iyu3.1}
     L^{2,2}_n\to 0\;\;\mbox{as}\;\;n\to\infty.
\end{equation}
    To show that $L_n^{2,1}\to 0$ as $n\to 0$, we  first consider the following equation:
\begin{equation}\label{iyu3.2}
\begin{cases}
    \xi^n_t-\triangle \xi^n-a\xi^n=\chi_\omega(u^{*,1}_{\varepsilon_n}-\bar{u})&\mbox{in}\;
    \Omega\times(0,T_{\frac{K}{2}}),\\
    \xi^n=0&\mbox{on}\;\partial\Omega\times(0,T_{\frac{K}{2}}),\\
    \xi^n(0)=0&\mbox{in}\;\Omega.
\end{cases}
\end{equation}
    The solution for this equation in time $T^{*,1}_{\varepsilon_n}\wedge\bar{T}$
    can be written as
$$
    \xi^n(T^{*,1}_{\varepsilon_n}\wedge\bar{T})=\int_0^{T^{*,1}_{\varepsilon_n}\wedge\bar{T}}
    S(T^{*,1}_{\varepsilon_n}\wedge\bar{T}-t)\chi_\omega(u^{*,1}_{\varepsilon_n}(t)
    -\bar{u}(t))dt.
$$
    From this, (\ref{yu3.04}) and  Aubin's theorem, it follows that on a subsequence of $\{\varepsilon_n\}_{n\in\mathbb{N}}$, still denoted in the same way,
$$
    \|\xi^n(T^{*,1}_{\varepsilon_n}\wedge\bar{T})\|_\Omega\leq \|\xi^n\|_{C([0,T_{\frac{K}{2}}];L^2(\Omega))}\to 0\;\;\mbox{as}\;n\to\infty.
$$
    Hence,
\begin{eqnarray}\label{iyu3.3}
    &\;&\left\|\int_0^{T^{*,1}_{\varepsilon_n}\wedge\bar{T}}
    S(T^{*,1}_{\varepsilon_n}-t)\chi_\omega(u^{*,1}_{\varepsilon_n}(t)
    -\bar{u}(t))dt\right\|_\Omega\nonumber\\
    &\leq&\|S(T^{*,1}_{\varepsilon_n}-T^{*,1}_{\varepsilon_n}\wedge\bar{T})\|
    \|\xi^n(T^{*,1}_{\varepsilon_n}\wedge\bar{T})\|_\Omega\to 0\;\;\mbox{as}\;n\to\infty.
\end{eqnarray}
    Similarly, we can prove that on a subsequence of $\{\varepsilon_n\}_{n\in\mathbb{N}}$, still denoted in the same way,
\begin{equation}\label{iyu3.5}
    \left\|\int_0^{T^{*,1}_{\varepsilon_n}\wedge\bar{T}}
    S(\bar{T}-t)\chi_\omega(u^{*,1}_{\varepsilon_n}(t)
    -\bar{u}(t))dt\right\|_\Omega\to 0\;\;\mbox{as}\;n\to\infty.
\end{equation}
    This, together with (\ref{iyu3.3}), leads to $L_n^{2,1}\to 0$ as $n\to\infty$.
   The later, combined with (\ref{yuevolution3.3}) and  (\ref{iyu3.1}), indicates that
\begin{equation}\label{iyu3.4}
    L_n^2\to 0\;\;\mbox{as}\;n\to\infty.
\end{equation}
    On the other hand,
\begin{equation}\label{yuevolution3.4}
    L_n^3+L_n^4\leq M(|T^{*,1}_{\varepsilon_n}-T^{*,1}_{\varepsilon_n}\wedge\bar{T}|
    +|\bar{T}-T^{*,1}_{\varepsilon_n}\wedge\bar{T}|)\to 0\;\;\mbox{as}\;\;n\to\infty.
\end{equation}
    Hence,  (\ref{yu3.07}) follows from  (\ref{yuevolution3.4}), (\ref{yuevolution3.1}), (\ref{yuevolution3.2})
    and (\ref{iyu3.4}).
\par
   Then, we  show  (\ref{yu3.08}).
    Let $\{e^{\triangle t}\}_{t\geq 0}$ be the strongly continuous
    semigroup generated by $-\triangle$ in $L^2(\Omega)$. Then,
    for any $t\in[0,T^{*,1}_{\varepsilon_n}]$,
\begin{eqnarray}\label{yuevolution3.5}
    &\;&\|y^{\varepsilon_n}(t;u^{*,1}_{\varepsilon_n},y_0)
    -y(t;u^{*,1}_{\varepsilon_n},y_0)\|_\Omega\nonumber\\
    &=&\left\|\int_0^t e^{\triangle (t-s)}a_{\varepsilon_n}
    y^{\varepsilon_n}(s;u^{*,1}_{\varepsilon_n},y_0)ds
    -\int_0^t e^{\triangle (t-s)}a y(s;u^{*,1}_{\varepsilon_n},y_0)ds
    \right\|_\Omega\nonumber\\
    &\leq&\|a_{\varepsilon_n}\|_{L^\infty(\Omega)}
    \int_0^t\|y^{\varepsilon_n}(s;u^{*,1}_{\varepsilon_n},y_0)
    -y(s;u^{*,1}_{\varepsilon_n},y_0)\|_\Omega ds\nonumber\\
    &\;&+\|a_{\varepsilon_n}-a\|_{L^\infty(\Omega)}\int_0^{T^{*,1}_{\varepsilon_n}}
    \|y(s;u^{*,1}_{\varepsilon_n},y_0)\|_\Omega ds.
\end{eqnarray}
    By $(H_1)$, (\ref{yu3.05a}) and Gronwall's inequality, we have
\begin{eqnarray}\label{yuevolution3.6}
    &\;&\|y^{\varepsilon_n}(T^{*,1}_{\varepsilon_n};u^{*,1}_{\varepsilon_n},y_0)
    -y(T^{*,1}_{\varepsilon_n};u^{*,1}_{\varepsilon_n},y_0)\|_\Omega\nonumber\\
    &\leq&\|a_{\varepsilon_n}-a\|_{L^\infty(\Omega)}
    e^{\|a_{\varepsilon_n}\|_{L^\infty(\Omega)}T^{*,1}_{\varepsilon_n}}
    \int_0^{T^{*,1}_{\varepsilon_n}}\|y(s;u^{*,1}_{\varepsilon_n},y_0)\|_\Omega ds\nonumber\\
    &\;&\to 0\;\;\mbox{as}\;\;n\to\infty.
\end{eqnarray}
    This gives (\ref{yu3.08}).
\par
    From  (\ref{yu3.06}),
    (\ref{yu3.07}) and (\ref{yu3.08}),
    we conclude that (\ref{yu3.05}) holds.
    By (\ref{yu3.05}) and the closeness of $\overline{B_K(0)}$, we get that
    $y(\bar{T};\bar{u},y_0)\in\overline{B_{K}(0)}$.
\vskip 10pt
  \noindent \emph{Step 3. $\bar{T}=T^*$
    and $\bar{u}\chi_{[0,T^*)}$, when extended by zero to $[T^*,\infty)$, is  an optimal control to  $(TP)$.}
\par
    Since $y(\bar{T};\bar{u},y_0)\in\overline{B_{K}(0)}$, it holds that
    $\bar{T}\geq T^*$. Seeking for a contradiction, we suppose that $\bar{T}>T^*$.  Then we would have that
    \begin{equation}\label{WANGHENHEN3.22}
    \tau\equiv \frac{1}{2}(\bar{T}-T^*)>0.
    \end{equation}
         Notice that
    $y_{T^*}\equiv y(T^*;u^*,y_0)\in\partial\overline{B_K(0)}$, where $\partial\overline{B_K(0)}$
    is the boundary of the set $\overline{B_K(0)}$. Clearly,
$    y(\tau;0,y_{T^*})=S(\tau)y_{T^*}$.
    Therefore,
$    \|y(\tau;0,y_{T^*})\|_{\Omega}\leq Ke^{-\delta_0 \tau}$.
    This leads to
\begin{equation}\label{yu3.18}
    y(\tau;0,y_{T^*})\in\overline{B_{Ke^{-\delta_0 \tau}}(0)}.
\end{equation}
    We note that
\begin{equation}\label{WANGHENHEN3.24}
    u^*(t)=
    0,\;\;t\in[T^*,+\infty).
\end{equation}
    Then, from (\ref{yu3.18}), it follows that
$    y(T^*+\tau;u^*,y_0)\in\overline{B_{Ke^{-\delta_0 \tau}}(0)}$.
       Let $\{\varepsilon_n\}_{n\in\mathbb{N}}$ be the subsequence given in Step 2. Then, by
        (\ref{yu2.03}) and (\ref{WANGHENHEN3.24}),  there exists an $N>0$ such
    that
\begin{equation}\label{yu3.19}
    |\bar{T}-T_{\varepsilon_n}^{*,1}|< \tau,\;\;\mbox{when}\;\;n\geq N
\end{equation}
     and
$$
    \|y^{\varepsilon_n}(T^*+\tau;u^*,y_0)-y(T^*+\tau;u^*,y_0)\|_\Omega\leq
    \frac{K}{2}(1-e^{-\delta_0 \tau}),\;\;\mbox{when}\;\;n\geq N.
$$
   Hence,
\begin{equation}\label{WANGHENHEN3.26}
    y^{\varepsilon_n}(T^*+\tau;u^*,y_0)
    \in\overline{B_{\frac{K}{2}(1+e^{-\delta_0 \tau})}(0)},\;\;\mbox{when}\;\;n\geq N.
\end{equation}
  Because $\frac{K}{2}(1+e^{-\delta_0\tau})<K$, it follows from (\ref{WANGHENHEN3.24}), (\ref{WANGHENHEN3.26}), the continuous decay property of $S^{\varepsilon_n}(\cdot)y^{\varepsilon_n}(T^*; u^*,y_0)$ and the optimality of $T_{\varepsilon_n}^{*,1}$ to $(TP_1^{\varepsilon_n})$ that
  $$
   T^{*,1}_{\varepsilon_n}<T^*+\tau.
   $$
  This, along with  (\ref{WANGHENHEN3.22}) and (\ref{yu3.19}),  indicates that
 $$
    T^{*,1}_{\varepsilon_n}<\bar{T}-\tau<T^{*,1}_{\varepsilon_n},
    \;\;\mbox{when}\;\;n\geq
    N,
$$
    which leads to  a contradiction. Hence, it holds that
    \begin{equation}\label{WANGHENHEN3.27}
        \bar{T}=T^*.
        \end{equation}
        \vskip 5pt
    Since
    $y(T^*;\bar{u},y_0)=y(\bar{T};\bar{u},y_0)\in\overline{B_K(0)}$, the control  $\bar{u}\chi_{[0,T^*)}$, when extended by zero to $[T^*,+\infty)$, is
    an optimal control to problem $(TP)$.

      \noindent
        {\it Step 4. It holds that  $\lim_{\varepsilon\rightarrow 0^+}T_\varepsilon^{*,1}=T^*$.}

    By the uniqueness of the optimal control to $(TP)$ (see Proposition~ \ref{lemmayu3.1}), we have that
    \begin{equation}\label{WANGHENHEN3.28}
    \bar{u}=u^*\;\;\mbox{in}\;\;[0,T^*).
    \end{equation}
 Since $\{\varepsilon_n\}_{n\in\mathbb{N}}$ was arbitrarily taken from $\{\varepsilon\}_{\varepsilon\in (0, \varepsilon_K]}$,   it follows from (\ref{WANGHENHEN3.27}),
 (\ref{WANGHENHEN3.28}) and the conclusions in Step 2 that
 \begin{equation}\label{WANGHUANG3.29}
 T_\varepsilon^{*,1}\rightarrow T^*\;\;\mbox{and}\;\; u^{*,1}_{\varepsilon}\rightarrow u^*\;\;\mbox{weakly-star in}\;\; L^\infty(0, T^*; L^2(\Omega)),\;\;\mbox{as}\;\;\varepsilon\rightarrow 0^+.
 \end{equation}
   This  completes the proof.
   \end{proof}

Let
    $\hat{\varphi}_{T^*}$, $\hat{\varphi}^\varepsilon_{T^*}$
    and $\hat{\varphi}^\varepsilon_{T^{*,1}_\varepsilon}$ be accordingly
    the minimizers of $J^{T^*}(\cdot)$, $J^{T^*}_\varepsilon(\cdot)$
    and
    $J^{T^{*,1}_\varepsilon}_\varepsilon(\cdot)$.
Define the following controls:
\begin{equation}\label{yu4.01}
    f_{T^*}(t)=
\begin{cases}
    \left(\displaystyle\int_0^{T^*}\|\varphi(s;\hat{\varphi}_{T^*},T^*)\|
    _{\omega}ds\right)\dfrac{\chi_\omega\varphi(t;\hat{\varphi}_{T^*},T^*)}
    {\|\varphi(t;\hat{\varphi}_{T^*},T^*)\|_{\omega}},&
    t\in[0,T^*),\\
    \;\;\;\;\;\;\;\;\;\;\;\;\;\;\;\;\;\;\;0,&t\in[T^*,+\infty),
\end{cases}
\end{equation}
\begin{equation}\label{yub4.01}
    f^\varepsilon_{T^*}(t)=
\begin{cases}
    \left(\displaystyle\int_0^{T^*}\|\varphi^\varepsilon(s;
    \hat{\varphi}^\varepsilon_{T^*},T^*)\|_{\omega}ds\right)
    \dfrac{\chi_\omega\varphi^\varepsilon(t;\hat{\varphi}^\varepsilon_{T^*},T^*)}
    {\|\varphi^\varepsilon(t;\hat{\varphi}^\varepsilon_{T^*},T^*)\|_{\omega}},
    &t\in[0,T^*),\\
    \;\;\;\;\;\;\;\;\;\;\;\;\;\;\;\;\;\;\;\;0,&t\in[T^*,+\infty),
\end{cases}
\end{equation}
    and
\begin{equation}\label{yu4.02}
    f^\varepsilon_{T^{*,1}_\varepsilon}(t)=
\begin{cases}
    \left(\displaystyle\int_0^{T^{*,1}_\varepsilon}\|
    \varphi^\varepsilon(s;\hat{\varphi}
    ^\varepsilon_{T^{*,1}_\varepsilon},T^{*,1}_\varepsilon)\|_{\omega}ds\right)
    \dfrac{\chi_\omega\varphi^\varepsilon
    (t;\hat{\varphi}^\varepsilon_{T^{*,1}_\varepsilon},T^{*,1}_\varepsilon)}
    {\|\varphi^\varepsilon(t;
    \hat{\varphi}^\varepsilon_{T^{*,1}_\varepsilon},T^{*,1}_\varepsilon)\|_{\omega}},&
    t\in[0,T^{*,1}_\varepsilon),\\
    \;\;\;\;\;\;\;\;\;\;\;\;\;\;\;\;\;\;\;\;\;\;\;\;\;0,&t\in[T^{*,1}_\varepsilon,+\infty).
\end{cases}
\end{equation}

\begin{proposition}\label{lemmayu4.1}
    Suppose that $(H_1)$, $(H_2)$ and $(H_3)$ hold.
    Then, there is an $\varepsilon_\delta>0$ such that when $\varepsilon\in (0, \varepsilon_\delta]$, the controls  $f_{T^*}$, $f^\varepsilon_{T^*}$ and
    $f^\varepsilon_{T^{*,1}_\varepsilon}$, defined by (\ref{yu4.01}), (\ref{yub4.01}) and (\ref{yu4.02}) respectively,  are  accordingly the optimal controls to the problems $(NP_{T^*})$,
    $(NP^\varepsilon_{T^*})$ and
    $(NP^\varepsilon_{T^{*,1}_\varepsilon})$.
    Consequently, when $\varepsilon\in(0,\varepsilon_\delta]$,
$$
    M_{T^*}=\int_0^{T^*}\|\varphi(t,\hat{\varphi}_{T^*},T^*)\|_{\omega}dt;\;\;\;\;
    M^\varepsilon_{T^*}=\int_0^{T^*}\|\varphi^\varepsilon
    (t,\hat{\varphi}^\varepsilon_{T^*},T^*)\|_{\omega}dt
$$
    and
$$
    M^\varepsilon_{T^{*,1}_\varepsilon}=\int_0^{T^{*,1}_\varepsilon}
    \|\varphi^\varepsilon(t,\hat{\varphi}^\varepsilon_{T^{*,1}_\varepsilon},
    T^{*,1}_\varepsilon)\|_{\omega}dt.
$$
     Here $\varphi(\cdot;\varphi_{T^*},T^*)$, $\varphi^\varepsilon(\cdot;\varphi^\varepsilon_{T^{*,1}_\varepsilon},
    T^{*,1}_\varepsilon)$ and
    $\varphi^\varepsilon(\cdot;\varphi_{T^*}^\varepsilon,T^*)$ are accordingly the solutions to the equations (\ref{103}), (\ref{104}) and (\ref{w-2}).
\end{proposition}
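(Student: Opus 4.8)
The plan is to read off the conclusion from the first-order (Euler--Lagrange) optimality condition of the non-zero minimizers of the three functionals, combined with the duality pairing between the state equation~(\ref{equationyu2.01}) and the adjoint equation~(\ref{103}). I will describe the argument only for $(NP_{T^*})$ and the control $f_{T^*}$; the statements for $(NP^\varepsilon_{T^*})$ and $(NP^\varepsilon_{T^{*,1}_\varepsilon})$ follow word for word after replacing $(a,T^*,\hat{\varphi}_{T^*})$ by $(a_\varepsilon,T^*,\hat{\varphi}^\varepsilon_{T^*})$ and by $(a_\varepsilon,T^{*,1}_\varepsilon,\hat{\varphi}^\varepsilon_{T^{*,1}_\varepsilon})$ respectively. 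Throughout write $N\equiv\int_0^{T^*}\|\varphi(t;\hat{\varphi}_{T^*},T^*)\|_\omega\,dt$, and recall that, with $S(\cdot)$ the self-adjoint semigroup of Section~2, one has $\varphi(t;\varphi_{T^*},T^*)=S(T^*-t)\varphi_{T^*}$ and $y(T^*;f,y_0)=S(T^*)y_0+\int_0^{T^*}S(T^*-t)\chi_\omega f(t)\,dt$, so that self-adjointness of $S(\cdot)$ yields the duality identity
\begin{equation}\label{eq:pf-duality}
\langle y(T^*;f,y_0),\varphi_{T^*}\rangle_\Omega=\langle y_0,\varphi(0;\varphi_{T^*},T^*)\rangle_\Omega+\int_0^{T^*}\langle f(t),\varphi(t;\varphi_{T^*},T^*)\rangle_\omega\,dt
\end{equation}
for all $f\in L^\infty(0,T^*;L^2(\Omega))$ and all $\varphi_{T^*}\in L^2(\Omega)$.

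\emph{Step 1 (non-vanishing and well-posedness of the formulas).} First I would check that the minimizers are non-zero. By Proposition~\ref{lemmayu4.2}$(i)$ together with the bang-bang property, $M_{T^*}=M>0$; hence the zero control is inadmissible for $(NP_{T^*})$, i.e.\ $y(T^*;0,y_0)=S(T^*)y_0\notin\overline{B_K(0)}$, which is precisely hypothesis $(H_5)$ at $T=T^*$, and Lemma~\ref{lemmawang1}$(ii)$ gives $\hat{\varphi}_{T^*}\neq0$ and $N>0$. Since the complement of $\overline{B_K(0)}$ is open, $T^{*,1}_\varepsilon\to T^*$ (Lemma~\ref{wanglemma31}) and $y^\varepsilon(\cdot;0,y_0)\to y(\cdot;0,y_0)$ force $y^\varepsilon(T^*;0,y_0)\notin\overline{B_K(0)}$ and $y^\varepsilon(T^{*,1}_\varepsilon;0,y_0)\notin\overline{B_K(0)}$ for all small $\varepsilon$; Lemma~\ref{lemmawang1}$(iv)$ (and the corresponding statement for $J^{T^*}_\varepsilon(\cdot)$ recorded before Corollary~\ref{corollaryyu1.1-1}) then provide an $\varepsilon_\delta>0$ for which $\hat{\varphi}^\varepsilon_{T^*}\neq0$ and $\hat{\varphi}^\varepsilon_{T^{*,1}_\varepsilon}\neq0$ whenever $\varepsilon\in(0,\varepsilon_\delta]$. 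Because $\hat{\varphi}_{T^*}\neq0$, the unique continuation property for~(\ref{103}) forces $\|\varphi(t;\hat{\varphi}_{T^*},T^*)\|_\omega>0$ for a.e.\ $t\in(0,T^*)$, so the quotient in~(\ref{yu4.01}) is defined a.e.\ and $f_{T^*}\in L^\infty(0,T^*;L^2(\Omega))$ with $\|f_{T^*}(t)\|_\Omega=N$ a.e., whence $\|f_{T^*}\|_{L^\infty(0,T^*;L^2(\Omega))}=N$.

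\emph{Steps 2--3 (Euler--Lagrange equation, admissibility, and optimality).} Since $\hat{\varphi}_{T^*}\neq0$ and the adjoint state is a.e.\ non-zero on $\omega$, $J^{T^*}$ is G\^ateaux differentiable at $\hat{\varphi}_{T^*}$, and vanishing of its first variation in an arbitrary direction $\psi\in L^2(\Omega)$, after using $N\,\|\varphi(t;\hat{\varphi}_{T^*},T^*)\|_\omega^{-1}\langle\varphi(t;\hat{\varphi}_{T^*},T^*),\varphi(t;\psi,T^*)\rangle_\omega=\langle f_{T^*}(t),\varphi(t;\psi,T^*)\rangle_\omega$, reads
\begin{equation}\label{eq:pf-EL}
\int_0^{T^*}\langle f_{T^*}(t),\varphi(t;\psi,T^*)\rangle_\omega\,dt+\langle y_0,\varphi(0;\psi,T^*)\rangle_\Omega+K\frac{\langle\hat{\varphi}_{T^*},\psi\rangle_\Omega}{\|\hat{\varphi}_{T^*}\|_\Omega}=0.
\end{equation}
Combining~(\ref{eq:pf-EL}) with~(\ref{eq:pf-duality}) taken at $f=f_{T^*}$, $\varphi_{T^*}=\psi$ gives $\langle y(T^*;f_{T^*},y_0)+K\hat{\varphi}_{T^*}/\|\hat{\varphi}_{T^*}\|_\Omega,\psi\rangle_\Omega=0$ for every $\psi$, hence $y(T^*;f_{T^*},y_0)=-K\hat{\varphi}_{T^*}/\|\hat{\varphi}_{T^*}\|_\Omega\in\partial\overline{B_K(0)}$ and $f_{T^*}\in\mathcal{F}_{T^*}$ is admissible. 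Choosing $\psi=\hat{\varphi}_{T^*}$ in~(\ref{eq:pf-EL}) and noting $\int_0^{T^*}\langle f_{T^*}(t),\varphi(t;\hat{\varphi}_{T^*},T^*)\rangle_\omega\,dt=N^2$ yields the scalar identity $N^2=-K\|\hat{\varphi}_{T^*}\|_\Omega-\langle y_0,\varphi(0;\hat{\varphi}_{T^*},T^*)\rangle_\Omega$. Finally, for any competitor $f\in\mathcal{F}_{T^*}$, applying~(\ref{eq:pf-duality}) with $\varphi_{T^*}=\hat{\varphi}_{T^*}$, bounding $\langle y(T^*;f,y_0),\hat{\varphi}_{T^*}\rangle_\Omega\ge-K\|\hat{\varphi}_{T^*}\|_\Omega$ and the control integral by $\|f\|_{L^\infty(0,T^*;L^2(\Omega))}\,N$ through Cauchy--Schwarz together with $\|f(t)\|_\omega\le\|f(t)\|_\Omega$, gives
\[
-K\|\hat{\varphi}_{T^*}\|_\Omega\le\langle y_0,\varphi(0;\hat{\varphi}_{T^*},T^*)\rangle_\Omega+\|f\|_{L^\infty(0,T^*;L^2(\Omega))}\,N;
\]
with the scalar identity this becomes $\|f\|_{L^\infty(0,T^*;L^2(\Omega))}\,N\ge N^2$, and $N>0$ yields $\|f\|_{L^\infty(0,T^*;L^2(\Omega))}\ge N=\|f_{T^*}\|_{L^\infty(0,T^*;L^2(\Omega))}$. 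Thus $f_{T^*}$ is optimal for $(NP_{T^*})$ with $M_{T^*}=N$; running Steps~1--3 with $a_\varepsilon$ on $[0,T^*]$ and on $[0,T^{*,1}_\varepsilon]$ produces the other two formulas.

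The main obstacle is the rigorous justification in Step~2 of the Euler--Lagrange equation~(\ref{eq:pf-EL}) and, with it, the well-posedness of the defining formulas~(\ref{yu4.01})--(\ref{yu4.02}). This requires two non-degeneracy facts: that $\hat{\varphi}_{T^*}\neq0$ (so that the term $K\|\varphi_{T^*}\|_\Omega$ is differentiable there, handled in Step~1), and, more delicately, the a.e.\ positivity of $\|\varphi(t;\hat{\varphi}_{T^*},T^*)\|_\omega$, without which the integrand $\|\varphi(t;\hat{\varphi}_{T^*},T^*)\|_\omega^{-1}$ is not even defined and the singular quotient in~(\ref{yu4.01}) is meaningless. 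It is exactly here that the unique continuation property of the parabolic adjoint equation is indispensable, and it is the step I would treat with the most care.
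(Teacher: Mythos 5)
Your proposal is correct and follows essentially the same route as the paper's proof: establishing $(H_5)$ at $T=T^*$ via the bang-bang property so the minimizers are non-zero, invoking unique continuation to make the quotient formulas well defined, then using the Euler--Lagrange equation of the minimizer together with the duality pairing to show $y(T^*;f_{T^*},y_0)=-K\hat{\varphi}_{T^*}/\|\hat{\varphi}_{T^*}\|_\Omega$ (admissibility) and the Cauchy--Schwarz estimate against an arbitrary competitor to show minimality. Your slight reorganization into a scalar identity before the competitor bound is only cosmetic; the argument is the paper's.
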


\begin{proof} When the target set is the origin of $L^2(\Omega)$, the corresponding results in this lemma have been prove in \cite{b4} (see Theorem 3.2 in \cite{b4}).
Our proof here is very similar to those in  \cite{b4}. For the sake of the completeness of the paper, we provide the detailed proof by following steps.
\vskip 10pt
  \noindent  \emph{Step 1. There exists an $\varepsilon_\delta>0$ such that $f_{T^*}$,
   $f^\varepsilon_{T^*}$ and $f^\varepsilon_{T^{*,1}_\varepsilon}$ are well
    defined, when $\varepsilon\in(0,\varepsilon_\delta]$.}
\par
    From the optimality of $T^*$ and the bang-bang property of the optimal control $u^*$ to the problem $(TP)$ (see \cite{b10}), we have
    $y(T^*;0,y_0)\notin \overline{B_K(0)}$. Hence, the assumption $(H_5)$, where $T$ is replaced by $T^*$, holds. Then, by Lemma \ref{lemmawang1} and Lemma
    \ref{wanglemma31}, there exists an
    $\varepsilon_\delta>0$ such that
    the functionals $J^{T^*}_\varepsilon(\cdot)$ and
    $J^{T^{*,1}_\varepsilon}_\varepsilon(\cdot)$
    have the unique minimizers $\hat{\varphi}^\varepsilon_{T^*}$ and
    $\hat{\varphi}^\varepsilon_{T^*_\varepsilon}$ in
    $L^2(\Omega)$ when $\varepsilon\in(0,\varepsilon_\delta]$. Moreover,
    $\hat{\varphi}^\varepsilon_{T^*}\neq 0$ and
    $\hat{\varphi}^\varepsilon_{T^{*,1}_\varepsilon}\neq 0$ for any
    $\varepsilon\in(0,\varepsilon_\delta]$. From the unique
    continuation property of linear parabolic equations established in  \cite{b5}, we deduce that
    $$\|\varphi(t;\hat{\varphi}_{T^*},T^*)\|_{\omega}\neq 0; \;\;\;\; \|\varphi^\varepsilon
    (t;\hat{\varphi}^\varepsilon_{T^*},T^*)\|_{\omega}\neq 0,\; t\in[0,T^*)
    $$
    and
    $$
    \|\varphi^\varepsilon(t;
    \hat{\varphi}^\varepsilon_{T^{*,1}_\varepsilon},T^{*,1}_\varepsilon)\|_{\omega}\neq 0,\;\; t\in[0,T^{*,1}_\varepsilon).
    $$
    From these and from (\ref{yu4.01}), (\ref{yub4.01}) and (\ref{yu4.02}), it follows that  $f_{T^*}$,
    $f^\varepsilon_{T^*}$ and
    $f^\varepsilon_{T^{*,1}_\varepsilon}$ are well defined and belongs to
    $L^\infty(\mathbb{R}^+;L^2(\Omega))$,
    when $\varepsilon\in(0,\varepsilon_\delta]$.
\vskip 10pt
  \noindent \emph{Step 2. $f_{T^*}\in\mathcal{F}_{T^*}$,
    $f^\varepsilon_{T^*}\in\mathcal{F}^\varepsilon_{T^*}$ and
    $f^\varepsilon_{T^{*,1}_\varepsilon}
    \in\mathcal{F}^\varepsilon_{T^{*,1}_\varepsilon}$.}
\par
   The Euler equation associated with the minimizer  $\hat{\varphi}_{T^*}$ of $J^{T^*}(\cdot)$ is as
   \begin{eqnarray}\label{yu4.03}
    &\;&\left(\int_0^{T^*}\|\varphi(t;\hat{\varphi}_{T^*},T^*)\|_{\omega}dt\right)
    \int_0^{T^*}\frac{\langle\varphi(t;\hat{\varphi}_{T^*},T^*),\varphi(t,\varphi_{T^*},T^*)
    \rangle_{\omega}}{\|\varphi(t;\hat{\varphi}_{T^*})\|_{\omega}}dt\nonumber\\
    &\;&\;\;\;\;\;+\langle
    y_0,\varphi(0;\varphi_{T^*},T^*)\rangle_{\Omega}+K\frac{\langle
    \hat{\varphi}_{T^*},\varphi_{T^*}\rangle_{\Omega}}
    {\|\hat{\varphi}_{T^*}\|_{\Omega}}=0\;\;\mbox{for
    all}\;\;\varphi_{T^*}\in L^2(\Omega).
\end{eqnarray}
    Meanwhile,  by the equations (\ref{equationyu2.01}) and (\ref{103}), we have
\begin{eqnarray}\label{yu4.04}
    &\;&\langle
    y(T^*;f_{T^*},y_0),\varphi_{T^*}\rangle_{\Omega}\nonumber\\
    &=&\langle
    y_0,\varphi(0;\varphi_{T^*},T^*)\rangle_{\Omega}+\int_0^{T^*}\langle
    f_{T^*}(t),\varphi(t;\varphi_{T^*},T^*)\rangle_{\omega}dt\;\;\mbox{for
    all}\;\;\varphi_{T^*}\in L^2(\Omega).
\end{eqnarray}
    This, together with (\ref{yu4.01}) and (\ref{yu4.03}), yields
\begin{equation}\label{yucb4.01}
    \langle
    y(T^*;f_{T^*},y_0),\varphi_{T^*}\rangle_\Omega
    =-K\frac{\langle\hat{\varphi}_{T^*},
    \varphi_{T^*}\rangle_{\Omega}}{\|\hat{\varphi}_{T^*}\|_{\Omega}}\;\;\mbox{for
    all}\;\;\varphi_{T^*}\in L^2(\Omega).
\end{equation}
    Then, it follows from (\ref{yucb4.01}) that
$$
    \|y(T^*;f_{T^*},y_0)\|_{\Omega}\leq K\;\;\mbox{and}\;\;
    y(T^*;f_{T^*},y_0)\in\overline{B_K(0)}.
$$
    Hence $f_{T^*}\in\mathcal{F}_{T^*}$. Similarly, we can show that
    $f^\varepsilon_{T^*}\in\mathcal{F}^\varepsilon_{T^*}$ and
    $f^\varepsilon_{T^{*,1}_\varepsilon}\in\mathcal{F}
    ^\varepsilon_{T^{*,1}_\varepsilon}$ for any
    $\varepsilon\in(0,\varepsilon_\delta]$.
\vskip 10 pt
  \noindent   {\it Step 3.  It holds that
  $\|f_{T^*}\|_{L^\infty(\mathbb{R}^+;L^2(\Omega))}\leq
    \|g_1\|_{L^\infty(\mathbb{R}^+;L^2(\Omega))}$ for any $g_1\in
    \mathcal{F}_{T^*}$; when
    $\varepsilon\in(0,\varepsilon_\delta]$,  $\|f^\varepsilon_{T^*}\|_{L^\infty(\mathbb{R}^+;L^2(\Omega))}\leq
    \|g_2\|_{L^\infty(\mathbb{R}^+;L^2(\Omega))}$  and
    $\|f^\varepsilon_{T^{*,1}_\varepsilon}\|_{L^\infty
    (\mathbb{R}^+;L^2(\Omega))}\leq
    \|g_3\|_{L^\infty(\mathbb{R}^+;L^2(\Omega))}$ for each  $g_2\in
    \mathcal{F}^\varepsilon_{T^*}$, $g_3\in
    \mathcal{F}^\varepsilon_{T^{*,1}_\varepsilon}$.}
\par
    Suppose that $g_1\in\mathcal{F}_{T^*}$. Then
    $\|y(T^*;g_1,y_0)\|_{\Omega}\leq K$.
    By  the equations (\ref{equationyu2.01}) and (\ref{103}), we can
    conclude that
$$
    \langle
    y(T^*;g_1,y_0),\hat{\varphi}_{T^*}\rangle_{\Omega}=\langle
    y_0,\varphi(0;\hat{\varphi}_{T^*},T^*)\rangle_{\Omega}+\int_0^{T^*}\langle
    g_1(t),\varphi(t;\hat{\varphi}_{T^*},T^*)\rangle_{\omega}dt.
$$
    This, together with (\ref{yu4.04}) and (\ref{yucb4.01}), yields
\begin{eqnarray}\label{yu4.05}
    &\;&\|f_{T^*}\|^2_{L^\infty(0,T^*;L^2(\Omega))}=\int_0^{T^*}
    \langle f_{T^*}(t),\varphi(t;\hat{\varphi}_{T^*},T^*)\rangle_\omega dt\nonumber\\
    &=&\int_0^{T^*}\langle g_1(t),\varphi(t;\hat{\varphi}_{T^*},T^*)
    \rangle_\omega dt+\langle y(T^*;f_{T^*},y_0),\hat{\varphi}_{T^*}\rangle_\Omega
    -\langle y(T^*;g_1,y_0),\hat{\varphi}_{T^*}\rangle_\Omega\nonumber\\
    &\leq&
    \int_0^{T^*}\langle
    g_1(t),\varphi(t;\hat{\varphi}_{T^*},T^*)\rangle_{\omega}dt
    -\langle y(T^*;g_1,y_0),\hat{\varphi}_{T^*}\rangle_{\Omega}
    -K\|\hat{\varphi}_{T^*}\|_{\Omega}\nonumber\\
    &\leq& \int_0^{T^*}\langle
    g_1(t),\varphi(t;\hat{\varphi}_{T^*},T^*)\rangle_{\omega}dt\nonumber\\
    &\leq&\|g_1\|_{L^\infty(\mathbb{R}^+;L^2(\Omega))}
    \int_0^{T^*}\|\varphi(t;\hat{\varphi}_{T^*},T^*)\|_{\omega}dt\nonumber\\
    &=&\|g_1\|_{L^\infty(\mathbb{R}^+;L^2(\Omega))}
    \|f_{T^*}\|_{L^\infty(\mathbb{R}^+;L^2(\Omega))}.
\end{eqnarray}
   Now, (\ref{yu4.05}) leads to $\|f_{T^*}\|_{L^\infty(\mathbb{R}^+;L^2(\Omega))}\leq
    \|g_1\|_{L^\infty(\mathbb{R}^+;L^2(\Omega))}$ for each $g_1\in\mathcal{F}_{T^*}$. Similarly, we can prove
    that
$$
    \|f^\varepsilon_{T^*}\|_{L^\infty(\mathbb{R}^+;L^2(\Omega))}\leq
    \|g_2\|_{L^\infty(\mathbb{R}^+;L^2(\Omega))}\;\;\mbox{for
    any}\;\;g_2\in\mathcal{F}^\varepsilon_{T^*}
$$
    and
$$
    \|f^\varepsilon_{T^{*,1}_\varepsilon}\|
    _{L^\infty(\mathbb{R}^+;L^2(\Omega))}\leq
    \|g_3\|_{L^\infty(\mathbb{R}^+;L^2(\Omega))}\;\;\mbox{for
    any}\;\;g_3\in\mathcal{F}_{T^{*,1}_\varepsilon}^\varepsilon,
$$
     when $\varepsilon\in(0,\varepsilon_\delta]$. The
     proof is completed.
\end{proof}

The following is a consequence of Proposition \ref{lemmayu4.2} and Proposition \ref{lemmayu4.1}.

\begin{corollary}\label{remarkyu-e-1}
   Suppose that
    $\varepsilon\in(0,\varepsilon_\rho\wedge\varepsilon_\delta]$. Let  $M_\varepsilon$  be given by  (\ref{yub2.01}). Let $\hat{\varphi}_{T^*}$ and
    $\hat{\varphi}^\varepsilon_{T^{*,1}_\varepsilon}$ be the minimizers
    of $J^{T^*}(\cdot)$  and
    $J^{T^{*,1}_\varepsilon}_\varepsilon(\cdot)$ respectively, and let $u^*$ and $u^{*,1}_\varepsilon$ be the optimal controls to $(TP)$ and $(TP_1^\varepsilon)$. Then it holds that
    \begin{equation}
    M_\varepsilon\equiv M_{T^*}^\varepsilon,
    \end{equation}
    \begin{equation}\label{yu4.06}
    M=M_{T^*}=M^\varepsilon_{T^{*,1}_\varepsilon}
    =\int_0^{T^*}\|\varphi(t;\hat{\varphi}_{T^*},T^*)\|_{\omega}dt
    =\int_0^{T^{*,1}_\varepsilon}\|\varphi^\varepsilon
    (t;\hat{\varphi}^\varepsilon_{T^{*,1}_\varepsilon},T^{*,1}_\varepsilon)\|_{\omega}dt,
\end{equation}
   \begin{equation}\label{yu4.07}
    u^*(t)=
\begin{cases}
    M\dfrac{\chi_\omega\varphi
    (t;\hat{\varphi}_{T^*},T^*)}{\|\varphi(t;\hat{\varphi}_{T^*},T^*)\|_{\omega}},
    &t\in[0,T^*),\\
    ~~~~~~~~0,&t\in[T^*,\infty)
\end{cases}
\end{equation}
and
\begin{equation}\label{yu4.08}
    u^{*,1}_\varepsilon(t)
    =
\begin{cases}
    M\dfrac{\chi_\omega\varphi^\varepsilon(t;
    \hat{\varphi}^\varepsilon_{T^{*,1}_\varepsilon},T^{*,1}_\varepsilon)}
    {\|\varphi^\varepsilon(t;
    \hat{\varphi}^\varepsilon_{T^{*,1}_\varepsilon},T^{*,1}_\varepsilon)\|_{\omega}},
    &t\in[0,T^{*,1}_\varepsilon),\\
    ~~~~~~~~~0,&t\in[T^{*,1}_\varepsilon,\infty).
\end{cases}
\end{equation}
\end{corollary}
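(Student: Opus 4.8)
The plan is to assemble the statement directly from the equivalence theorem (Proposition~\ref{lemmayu4.2}) and the explicit description of the minimal norm controls (Proposition~\ref{lemmayu4.1}), restricting throughout to $\varepsilon\in(0,\varepsilon_\rho\wedge\varepsilon_\delta]$ so that both are available. First I would dispose of the identity $M_\varepsilon\equiv M^\varepsilon_{T^*}$: by the definition (\ref{yub2.01}) one has $M_\varepsilon=\int_0^{T^*}\|\varphi^\varepsilon(t;\hat{\varphi}^\varepsilon_{T^*},T^*)\|_\omega\,dt$, and Proposition~\ref{lemmayu4.1} asserts exactly that $M^\varepsilon_{T^*}$ equals this same integral, so the two numbers coincide with no further work.

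The heart of the argument is the chain (\ref{yu4.06}). The two integral expressions on the right are immediate from Proposition~\ref{lemmayu4.1}, which identifies $M_{T^*}$ with $\int_0^{T^*}\|\varphi(t;\hat{\varphi}_{T^*},T^*)\|_\omega\,dt$ and $M^\varepsilon_{T^{*,1}_\varepsilon}$ with the corresponding $\varepsilon$-integral. What remains is to pin these minimal norms to the prescribed bound $M$. Here I would invoke the bang-bang property: the optimal control $u^*$ of $(TP)$ satisfies $\|u^*(t)\|_\Omega=M$ for a.e.\ $t\in[0,T^*)$, hence $\|u^*\|_{L^\infty(0,T^*;L^2(\Omega))}=M$. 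By Proposition~\ref{lemmayu4.2}$(i)$ this same $u^*$ is the unique optimal control of $(NP_{T^*})$, whose optimal norm is $M_{T^*}$ by definition; therefore $M=M_{T^*}$. Running the identical reasoning with $(TP^\varepsilon_1)$, whose admissible set $\mathcal{U}^\varepsilon_M$ carries the same bound $M$, together with Proposition~\ref{lemmayu4.2}$(ii)$ yields $M=M^\varepsilon_{T^{*,1}_\varepsilon}$. Combining these with the two integral identities gives (\ref{yu4.06}).

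Finally, the explicit formulas (\ref{yu4.07}) and (\ref{yu4.08}) follow by substitution. Proposition~\ref{lemmayu4.1} shows that $f_{T^*}$ and $f^\varepsilon_{T^{*,1}_\varepsilon}$, defined in (\ref{yu4.01}) and (\ref{yu4.02}), are the optimal controls of $(NP_{T^*})$ and $(NP^\varepsilon_{T^{*,1}_\varepsilon})$; by Proposition~\ref{lemmayu4.2}$(i)$--$(ii)$ and the uniqueness of optimal controls these coincide with $u^*$ and $u^{*,1}_\varepsilon$ respectively. Since (\ref{yu4.06}) identifies the leading integral factor appearing in each of (\ref{yu4.01}) and (\ref{yu4.02}) with the constant $M$, replacing that factor by $M$ converts those definitions into precisely (\ref{yu4.07}) and (\ref{yu4.08}).

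I expect the only genuinely non-clerical point to be the identification $M=M_{T^*}=M^\varepsilon_{T^{*,1}_\varepsilon}$ carried out in the second paragraph: this is where the pointwise control constraint $\|u(t)\|_\Omega\le M$ has to be upgraded to an equality of $L^\infty$-norms, and that upgrade is available only because the bang-bang property forces the saturation $\|u^*(t)\|_\Omega=M$ almost everywhere. Everything else amounts to reading off the definitions and substituting.
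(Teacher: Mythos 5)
Your proposal is correct and follows exactly the route the paper intends: the paper states this corollary without a written proof, merely noting it is "a consequence of Proposition \ref{lemmayu4.2} and Proposition \ref{lemmayu4.1}," and your write-up supplies precisely the expected details (the bang-bang saturation $\|u^*(t)\|_\Omega=M$ to identify $M$ with the minimal norms, the uniqueness of the norm-optimal controls to identify $u^*$ with $f_{T^*}$ and $u^{*,1}_\varepsilon$ with $f^\varepsilon_{T^{*,1}_\varepsilon}$, and substitution of the integral factor). No gaps.
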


\section{The proofs of Theorem \ref{ithyu1.1} and Theorem \ref{ithyu1.2}}

    In this section, we are going to prove Theorem \ref{ithyu1.1} and Theorem \ref{ithyu1.2}.

\begin{proof}[Proof of Theorem \ref{ithyu1.1}]
    The part $(i)$ has proved in Lemma \ref{wanglemma31}.
\par
    Now, we prove the part $(ii)$.  Let $\varepsilon_K>0$ be the number given in Step 1 of the proof of Lemma \ref{wanglemma31}.
    Given $\eta\in(0,T^*)$, by the conclusion in part $(i)$,     there exists an
    $\varepsilon_\eta=\varepsilon(\eta)\in(0,\varepsilon_K]$
    such that
$$
    |T^{*,1}_\varepsilon-T^*|\leq
    \eta,\;\;\mbox{when}\;\;\varepsilon\in(0,\varepsilon_\eta].
$$
    From (\ref{WANGHUANG3.29}),  we have
$$
    u^{*,1}_{\varepsilon}\to u^*\;\;\mbox{weakly star in}\;\;
    L^\infty(0,T^*-\eta;L^2(\Omega))\;\;\mbox{as}\;\; \varepsilon\to 0^+.
$$
    Hence, we can get that
\begin{equation}\label{WANGHENHEN4.1}
    u^{*,1}_\varepsilon\to u^*\;\;\mbox{weakly in}\;\;
    L^2(0,T^*-\eta; L^2(\Omega))\;\;\mbox{as}\;\; \varepsilon\to0^+.
\end{equation}
   On the other hand,  these optimal controls have   the bang-bang property (see \cite{b10}), i.e.,
\begin{equation}\label{WANGHENHEN4.2}
    \|u^*(t)\|_{\Omega}=M,\;\;\forall t\in[0,T^*-\eta]
\end{equation}
    and
\begin{equation}\label{WANGHENHEN4.3}
    \|u^{*,1}_\varepsilon(t)\|_{\Omega}=M,\;\;\forall t\in[0,T^*-\eta],\;\;\mbox{when}\;\;\varepsilon\in(0,\varepsilon_\eta].
\end{equation}
     Now, it follows from (\ref{WANGHENHEN4.1}), (\ref{WANGHENHEN4.2}) and (\ref{WANGHENHEN4.3}) that
      \begin{equation}\label{WANGHENHEN4.4}
     u^{*,1}_\varepsilon\rightarrow  u^*\;\;\mbox{strongly in }\;\; L^2(0,T^*-\eta; L^2(\Omega)),\;\;\mbox{as}\;\; \varepsilon\rightarrow 0^+.
      \end{equation}
     Since $\eta>0$ is arbitrarily and because $\|u^*(t)\|_\Omega\leq M$ and $\|u^{*,1}_\varepsilon(t)\|_\Omega\leq M$ for a.e. $t\in \mathbb{R}^+$, it follows from (\ref{WANGHENHEN4.4}) that
      $$
        u^{*,1}_\varepsilon\rightarrow  u^*\;\;\mbox{strongly in }\;\; L^2(0,T^*; L^2(\Omega)),\;\;\mbox{as}\;\; \varepsilon\rightarrow 0^+.
        $$
        This completes the proof of the part $(ii)$.
\par
    Finally, we prove the part  $(iii)$.
    By the conclusion in the part $(i)$ of this theorem, one can easily
    check that the assumption $(H_4)$, where  $T_\varepsilon=T^{*,1}_\varepsilon$ and $T=T^*$ holds. Meanwhile, by the optimality
    and bang-bang property of
    $u^*$, we know that $y(T^*;0,y_0)\notin\overline{B_K(0)}$. Thus,
    $(H_5)$, where $T=T^*$  holds. Hence, we can apply Theorem \ref{theoremyu1.1}, with $T=T^*$ and $T_\varepsilon=T^{*,1}_\varepsilon$, to get
\begin{equation}\label{yu-bu-4-12-18}
    \hat{\varphi}_{T^{*,1}_\varepsilon}^\varepsilon\to\hat{\varphi}_{T^*}\;\;
    \mbox{strongly in}\;\;L^2(\Omega)\;\;
    \mbox{as}\;\; \varepsilon\to 0^+,
\end{equation}
     where $\hat{\varphi}_{T^*}$ and
    $\hat{\varphi}_{T^{*,1}_\varepsilon}^\varepsilon$
    are accordingly the minimizers of $J^{T^*}(\cdot)$ and
    $J_\varepsilon^{T^{*,1}_\varepsilon}(\cdot)$ defined by (\ref{101}) and (\ref{102})
    respectively.
\par
   Given an $\eta>0$, by the conclusion in the part $(i)$,  there exists an $\varepsilon_\eta\in(0, \varepsilon_\rho\wedge\varepsilon_\delta] $, where $\varepsilon_\rho$ verifies (\ref{yu-b-2}) and
   $\varepsilon_\delta$ is given by Proposition~\ref{lemmayu4.1},
   such that
    \begin{equation}\label{WANGGENG4.6}
    T^{*,1}_\varepsilon> T^*-\eta,\;\;\mbox{when} \;\;\varepsilon\in(0,\varepsilon_\eta].
    \end{equation}
      We  claim that there exists a $C_\eta>0$
    such that
\begin{equation}\label{yu4.44}
    \|\varphi(t;\hat{\varphi}_{T^*},T^*)\|_\omega\geq
    C_\eta\;\;\mbox{for all}\;\;t\in[0,T^*-\eta],
\end{equation}
   where $\varphi(\cdot;\hat{\varphi}_{T^*},T^*)$ is the solution to equation (\ref{103}).

   Indeed, if the above did not hold, then there would be a sequence $\{t_n\}_{n\in \mathbb{N}}\in[0,T^*-\eta]$ such that
   $$
    \|\varphi(t_n;\hat{\varphi}_{T^*},T^*)\|_\omega<\frac{1}{n}.
$$
   Without loss of generality, we can assume that $t_n\to \hat{t}\in [0,T^*-\eta]$ as $n\to \infty$. This, along with the above inequality, yields
   $$
    \|\varphi(\hat{t};\hat{\varphi}_{T^*},T^*)\|_\omega=0.
$$
Then, by the unique continuation property established in  \cite{b5}, it holds that
 $\hat{\varphi}_{T^*}=0$, which leads to a contradiction. Hence, (\ref{yu4.44}) stands.

Now by Corollary~\ref{remarkyu-e-1} (see (\ref{yu4.07}) and (\ref{yu4.08})) and by  (\ref{WANGGENG4.6}) and (\ref{yu4.44}),  we see that when $\varepsilon\in(0,\varepsilon_\eta]$ and $t\in[0,T^*-\eta]$,
\begin{eqnarray}\label{yu4.45}
    &\;&\frac{1}{M^2}\|u^{*,1}_{\varepsilon}(t)-u^*(t)\|^2_\Omega
    =\left\|\frac{\varphi^{\varepsilon}
    (t;\hat{\varphi}_{T^{*,1}_{\varepsilon}}^{\varepsilon},T^{*,1}_{\varepsilon})}
    {\|\varphi^{\varepsilon}(t;\hat{\varphi}_{T^{*,1}_{\varepsilon}}
    ^{\varepsilon},T^{*,1}_{\varepsilon})\|_\omega}
    -\frac{\varphi(t;\hat{\varphi}_{T^*},T^*)}{\|\varphi
    (t;\hat{\varphi}_{T^*},T^*)\|_\omega}\right\|_\omega^2\nonumber\\
    &=&2+2\left[\frac{\langle\varphi^{\varepsilon}
    (t;\hat{\varphi}_{T^{*,1}_{\varepsilon}}^{\varepsilon},T^{*,1}_{\varepsilon}),
    \varphi^{\varepsilon}(t;
    \hat{\varphi}_{T^{*,1}_{\varepsilon}}^{\varepsilon},T^{*,1}_{\varepsilon})
    -\varphi(t;\hat{\varphi}_{T^*},T^*)\rangle_\omega}
    {\|\varphi^{\varepsilon}(t;
    \hat{\varphi}_{T^{*,1}_{\varepsilon}}^{\varepsilon},T^{*,1}_{\varepsilon})\|_\omega
    \|\varphi(t;\hat{\varphi}_{T^*},T^*)\|_\omega}-\frac{\|\varphi^{\varepsilon}
    (t;\hat{\varphi}_{T^{*,1}_{\varepsilon}}^{\varepsilon},T^{*,1}_{\varepsilon})\|_\omega}
    {\|\varphi(t;\hat{\varphi}_{T^*},T^*)\|_\omega}\right]\nonumber\\
    &\leq&2\left[\frac{\|\varphi^{\varepsilon}
    (t;\hat{\varphi}_{T^{*,1}_{\varepsilon}}^{\varepsilon},T^{*,1}_{\varepsilon})
    -\varphi(t;\hat{\varphi}_{T^*},T^*)\|_\omega}{\|
    \varphi(t;\hat{\varphi}_{T^*},T^*)\|_\omega}
    -\frac{\|\varphi^{\varepsilon}(t;
    \hat{\varphi}_{T^{*,1}_{\varepsilon}}^{\varepsilon},T^{*,1}_{\varepsilon})\|_\omega
    -\|\varphi(t;\hat{\varphi}_{T^*},T^*)\|_\omega}{
    \|\varphi(t;\hat{\varphi}_{T^*},T^*)\|_\omega}\right]\nonumber\\
    &\leq&4\frac{\|\varphi^{\varepsilon}
    (t;\hat{\varphi}_{T^{*,1}_{\varepsilon}}^{\varepsilon},T^{*,1}_{\varepsilon})
    -\varphi(t;\hat{\varphi}_{T^*},T^*)\|_\omega}{\|
    \varphi(t;\hat{\varphi}_{T^*},T^*)\|_\omega}
    \leq \frac{4}{C_\eta}\|\varphi^{\varepsilon}
    (t;\hat{\varphi}_{T^{*,1}_{\varepsilon}}^{\varepsilon},T^{*,1}_{\varepsilon})
    -\varphi(t;\hat{\varphi}_{T^*},T^*)\|_\omega.
\end{eqnarray}
   Because of (\ref{yu-bu-4-12-18}),  we have
\begin{eqnarray}\label{yu-bu-411}
    &\;&\sup_{t\in[0,T^*-\eta]}\|\varphi(t;\hat{\varphi}_{T^{*,1}_\varepsilon}
    ^\varepsilon,T^*)-\varphi(t;\hat{\varphi}_{T^*},T^*)\|_\Omega\nonumber\\
    &=&\sup_{t\in[0,T^*-\eta]}\|S(T^*-t)(\hat{\varphi}_{T^{*,1}_\varepsilon}
    ^\varepsilon-\hat{\varphi}_{T^*})\|_\Omega
    \leq\|\hat{\varphi}_{T^{*,1}_\varepsilon}
    ^\varepsilon-\hat{\varphi}_{T^*}\|_\Omega\to 0\;\;\mbox{as}\;\;\varepsilon\to 0^+.
\end{eqnarray}
   By the strong continuity of $S(\cdot)$ and the fact that $T^{*,1}_\varepsilon\to T^*$ as $\varepsilon\to 0^+$, we have
\begin{eqnarray}\label{yu-bu-412}
    &\;&\sup_{t\in[0,T^*-\eta]}\|\varphi(t;\hat{\varphi}_{T^{*,1}_\varepsilon}^\varepsilon,
    T^{*,1}_\varepsilon)-\varphi(t;\hat{\varphi}_{T^{*,1}_\varepsilon}^\varepsilon,T^*)\|
    _\Omega\nonumber\\
    &=&\sup_{t\in[0,T^*-\eta]}\|[S({T^{*,1}_\varepsilon}-t)-S(T^*-t)]
    \hat{\varphi}_{T^{*,1}_\varepsilon}^\varepsilon\|_\Omega\nonumber\\
    &\leq&\sup_{t\in[0,T^*-\eta]}\|[S(T^{*,1}_\varepsilon-t)-S(T^*-t)]
    (\hat{\varphi}^\varepsilon_{T^{*,1}_\varepsilon}-\hat{\varphi}_{T^*})\|_\Omega\nonumber\\
    &\;&+\sup_{t\in[0,T^*-\eta]}\|[S(T^{*,1}_\varepsilon-t)-S(T^*-t)]\hat{\varphi}_{T^*}\|
    _\Omega\nonumber\\
    &\leq&2\|\hat{\varphi}_{T^{*,1}_\varepsilon}^\varepsilon-\hat{\varphi}_{T^*}\|_\Omega
    +\|[S(T^{*,1}_\varepsilon-T^*+\eta)-S(\eta)]\hat{\varphi}_{T^*}\|_\Omega\nonumber\\
    &\;&\to 0\;\;\mbox{as}\;\;\varepsilon\to 0^+.
\end{eqnarray}
  Let  $\zeta^\varepsilon(\cdot)=\varphi^\varepsilon(\cdot;\hat{\varphi}
    _{T^{*,1}_\varepsilon}^\varepsilon,T^{*,1}_\varepsilon)-\varphi(\cdot;\hat{\varphi}
    _{T^{*,1}_\varepsilon}^\varepsilon,T^{*,1}_\varepsilon)$. Then it holds that
\begin{equation}\nonumber
\begin{cases}
    \zeta^\varepsilon_t+\triangle\zeta^\varepsilon
    +a_\varepsilon\zeta^\varepsilon+(a_\varepsilon-a)\varphi_\varepsilon
    =0&\mbox{in}\;\;\Omega\times
    (0,T^{*,1}_\varepsilon),\\
    \zeta^\varepsilon=0&\mbox{on}\;\;\partial\Omega\times(0,T^{*,1}_\varepsilon),\\
    \zeta^\varepsilon(T^{*,1}_\varepsilon)=0&\mbox{in}\;\;\Omega,
\end{cases}
\end{equation}
    where $\varphi_\varepsilon(\cdot)=\varphi(\cdot;\hat{\varphi}
    _{T^{*,1}_\varepsilon}^\varepsilon,T^{*,1}_\varepsilon)$. It is obvious that
    $$\zeta^\varepsilon(t)=-\int_t^{T^{*,1}_\varepsilon}S^\varepsilon
    (T^{*,1}_\varepsilon-s)(a_\varepsilon-a)\varphi
    _\varepsilon ds,\;\; t\in [0,T^*-\eta]$$ and
\begin{eqnarray}\label{yu-bu-413}
    \|\zeta^\varepsilon(t)\|_\Omega\leq \|a_\varepsilon-a\|_{L^\infty(\Omega)}
    T^{*,1}_\varepsilon\|\varphi(\cdot;\hat{\varphi}_{T^{*,1}_\varepsilon}
    ^\varepsilon,T^{*,1}_\varepsilon)\|_{C([0,T^{*,1}_\varepsilon];L^2(\Omega))}.
\end{eqnarray}
    However,
\begin{eqnarray*}
    \sup_{t\in[0,T^{*,1}_\varepsilon]}\|\varphi(t;\hat{\varphi}_{T^{*,1}_\varepsilon}
    ^\varepsilon,T^{*,1}_\varepsilon)\|_\Omega&=&\sup_{t\in[0,T^{*,1}_\varepsilon]}\left
    \|\int_t^{T^{*,1}_\varepsilon}S(T^{*,1}_\varepsilon-s)\hat{\varphi}^\varepsilon
    _{T^{*,1}_\varepsilon}ds\right\|_{\Omega}\nonumber\\
    &\leq&\|\hat{\varphi}_{T^{*,1}_\varepsilon}^\varepsilon\|_\Omega T^{*,1}_\varepsilon\to
    \|\hat{\varphi}_{T^*}\|_\Omega T^*\;\;\mbox{as}\;\;\varepsilon\to 0^+.
\end{eqnarray*}
    This, together with (\ref{yu-bu-413}) and $(H_1)$, gives
\begin{equation}\label{yu-bu-414}
    \|\varphi^\varepsilon(\cdot;\hat{\varphi}
    _{T^{*,1}_\varepsilon}^\varepsilon,T^{*,1}_\varepsilon)-\varphi(\cdot;\hat{\varphi}
    _{T^{*,1}_\varepsilon}^\varepsilon,T^{*,1}_\varepsilon)\|_{C([0,T^*-\eta];L^2(\Omega))}
    \to 0\;\;\mbox{as}\;\;\varepsilon\to 0^+.
\end{equation}
    Therefore, it follows from (\ref{yu-bu-411}), (\ref{yu-bu-412}) and (\ref{yu-bu-414}) that
\begin{eqnarray}\label{yu4.46}
    &\;&\sup_{t\in[0,T^*-\eta]}\|\varphi^{\varepsilon}
    (t;\hat{\varphi}^{\varepsilon}_{T^{*,1}_{\varepsilon}},T^{*,1}_{\varepsilon})
    -\varphi(t;\hat{\varphi}_{T^*},T^*)\|_\Omega\nonumber\\
    &\leq&\|\varphi^\varepsilon(\cdot;\hat{\varphi}_{T^{*,1}_\varepsilon}^\varepsilon,
    T^{*,1}_\varepsilon)-\varphi(\cdot;\hat{\varphi}^\varepsilon_{T^{*,1}_\varepsilon};
    T^{*,1}_\varepsilon)\|_{C([0,T^*-\eta];L^2(\Omega))}\nonumber\\
    &\;&+\|\varphi(\cdot;\hat{\varphi}^\varepsilon_{T^{*,1}_\varepsilon};T^{*,1}_\varepsilon)
    -\varphi(\cdot;\hat{\varphi}^\varepsilon_{T^{*,1}_\varepsilon};T^*)
    \|_{C([0,T^*-\eta];L^2(\Omega))}\nonumber\\
    &\;&+\|\varphi(\cdot;\hat{\varphi}^\varepsilon_{T^{*,1}_\varepsilon};T^*)
    -\varphi(\cdot;\hat{\varphi}_{T^*};T^*)\|_{C([0,T^*-\eta];L^2(\Omega))}
    \to
    0\;\;\mbox{as}\;\;\varepsilon\to0^+.
\end{eqnarray}
   By (\ref{yu4.46}) and (\ref{yu4.45}), we see that
\begin{equation}\label{yu4.47}
    \sup_{t\in[0,T^*-\eta]}\|u^{*,1}_{\varepsilon}(t)
    -u^*(t)\|_\Omega\to0\;\;\mbox{as}\;\;\varepsilon\to0^+.
\end{equation}
   This completes the proof.
\end{proof}

\par

\begin{proof}[Proof of Theorem \ref{ithyu1.2}]
   We first prove
    the part $(i)$. Note that the semigroup $S^\varepsilon(\cdot)$ is analytic.
   Thus,  from  \cite{b10} (see Theorem 1 and Remark in and at the end of this paper), it follows that when $\varepsilon\in(0,\varepsilon_\rho\wedge\varepsilon_\delta]$ (where $\varepsilon_\rho$ verifies (\ref{yu-b-2}) and $\varepsilon_\delta$ is given in Proposition \ref{lemmayu4.1} respectively),
\begin{equation}\label{yub4.11}
    \|u^{*,2}_\varepsilon(t)\|_{\Omega}=M_\varepsilon\;
    \;\mbox{a.e.}\;\;t\in[0,T^{*,2}_\varepsilon).
\end{equation}
    Let $f_{T^*}^\varepsilon$ be the optimal control to Problem $(NP_{T^*}^\varepsilon)$.
    By (\ref{yub2.01}) and (\ref{yub4.01}), we have
   \begin{equation}\label{WANGHENHEN4.16}
   M_{T^*}^\varepsilon=\|f^\varepsilon_{T^*}(t)\|_{\Omega}=\int_0^T\|\varphi^\varepsilon
   (t;\hat{\varphi}_{T^*}^\varepsilon,T^*)\|_\omega dt=
   M_\varepsilon\;\;\mbox{a.e.}\;\;t\in[0,T^*),
   \end{equation}
    where $M_{T^*}^\varepsilon$ is the optimal norm to Problem $(NP_{T^*}^\varepsilon)$
    and $\hat{\varphi}_{T^*}^\varepsilon$ is the minimizer of (\ref{w-1}).
    By the
    optimality of $f^\varepsilon_{T^*}$ to the problem
    $(NP_{T^*}^\varepsilon)$, we get
$$
    y^\varepsilon(T^*;f^\varepsilon_{T^*},y_0)\in\overline{B_K(0)}.
$$
    This leads to
\begin{equation}\label{yub4.12}
    T^{*,2}_\varepsilon\leq T^*\;\;\mbox{for
    all}\;\;\varepsilon\in(0,\varepsilon_\rho\wedge\varepsilon_\delta].
\end{equation}
Seeking for a contradiction, we suppose that there did exist an
    $\bar{\varepsilon}\in(0,\varepsilon_\rho\wedge\varepsilon_\delta]$ such that
\begin{equation}\label{yub4.13}
    T^{*,2}_{\bar{\varepsilon}}< T^*.
\end{equation}
   Let $u^{*,2}_{\bar{\varepsilon}}$ be the optimal control to Problem $(TP_2^{\bar{\varepsilon}})$.
   Then
\begin{equation}\label{yub4.14}
    u^{*,2}_{\bar{\varepsilon}}(\cdot)=
    0\;\;\;\mbox{in}\;\;[T^{*,2}_{\bar{\varepsilon}},+\infty),
\end{equation}
   By (\ref{yu-b-2}), (\ref{yub4.14}) and the optimality of $u^{*,2}_{\bar{\varepsilon}}$ to the problem $(TP^\varepsilon_2)$, we have
\begin{eqnarray}\label{yub4.16}
    \|y^{\bar{\varepsilon}}(T^*;u^{*,2}_{\bar{\varepsilon}},y_0)\|_\Omega&=&
    \|y^{\bar{\varepsilon}}(T^*-T^{*,2}_{\bar{\varepsilon}};0,y^{\bar{\varepsilon}}
    (T^{*,2}_{\bar{\varepsilon}};u^{*,2}_{\bar{\varepsilon}},y_0))\|_\Omega\nonumber\\
    &\leq&e^{-\hat{\delta}(T^*-T^{*,2}_{\bar{\varepsilon}})}\|y^{\bar{\varepsilon}}
    (T^{*,2}_{\bar{\varepsilon}};u^{*,2}_{\bar{\varepsilon}},y_0)\|_\Omega<K.
\end{eqnarray}
   This implies
$$
    y^{\bar{\varepsilon}}(T^*;u^{*,2}_{\bar{\varepsilon}},y_0)\in\overline{B_K(0)}.
$$
    Thus it holds that
    $u^{*,2}_{\bar{\varepsilon}}\in\mathcal{F}^{\bar{\varepsilon}}_{T^*}$. By (\ref{yub4.11}), (\ref{WANGHENHEN4.16}) and (\ref{yub4.14}), we have
\begin{equation}\label{yub4.15}
    M_{\bar{\varepsilon}}=M^{\bar{\varepsilon}}_{T^*}=\|u^{*,2}_{\bar{\varepsilon}}
    \|_{L^\infty(\mathbb{R}^+;L^2(\Omega))}=
    \|f^{\bar{\varepsilon}}_{T^*}\|_{L^\infty(\mathbb{R}^+;L^2(\Omega))}.
\end{equation}
    By the uniqueness of
    $f^{\bar{\varepsilon}}_{T^*}$ (see Proposition \ref{lemmayu4.2}), we get
\begin{equation}\label{yub4.18}
    u^{*,2}_{\bar{\varepsilon}}(t)=f^{\bar{\varepsilon}}_{T^*}(t)
    \;\;\mbox{a.e.}\;\;t\in[0,T^*).
\end{equation}
    Then, from the definition of $u^{*,2}_{\bar{\varepsilon}}$ (see (\ref{yub4.11}) and
    (\ref{yub4.14})),
$$
    f^{\bar{\varepsilon}}_{T^*}\equiv0\;\;\mbox{in}\;\; [T^{*,2}_{\bar{\varepsilon}},T^*].
$$
    It contradicts to the definition of
    $f^{\bar{\varepsilon}}_{T^*}$ in (\ref{yub4.01}). Therefore
$$
    T^{*,2}_\varepsilon\equiv T^*,\;\;\mbox{when}\;\; \varepsilon\in(0,\varepsilon_\rho\wedge\varepsilon_\delta].
$$
    Let $\varepsilon_\rho\wedge\varepsilon_\delta=\varepsilon_0$, we complete the proof of part $(i)$.
\par
    Now we give the proof of part $(ii)$.
     We note that $M_\varepsilon\equiv M_{T^*}^\varepsilon$ for each $\varepsilon\in(0,\varepsilon_0]$. It follows that the problems $(TP_2^\varepsilon)$
    and $(\overline{TP^\varepsilon})$ defined in Section 3 are the same for each $\varepsilon\in(0,\varepsilon_0]$. Hence,
    $(TP_2^\varepsilon)$ (i.e., $(\overline{TP^\varepsilon})$) and
    $(NP_{T^*}^\varepsilon)$ share the same optimal control (see Proposition
    \ref{lemmayu4.2}) for each $\varepsilon\in(0,\varepsilon_0]$. By the definition of $f^\varepsilon_{T^*}$ (see (\ref{yub4.01})), we get the formula to $u^{*,2}_\varepsilon$. This gives the conclusion of part $(ii)$.
\par
    For the proof of part $(iii)$, we note that, by the definition of $M_\varepsilon$ (see (\ref{yub2.01})),
\begin{equation}\nonumber
    M_\varepsilon=\int_0^{T^*}\|\varphi^\varepsilon(t; \hat{\varphi}_{T^*}^\varepsilon,T^*)\|_{\omega}dt,
\end{equation}
    where $\hat{\varphi}_{T^*}^\varepsilon$ is the minimizer of (\ref{w-1}). From (\ref{yub2.02}),
    we have $M_\varepsilon\to M$ as $\varepsilon\to 0^+$. This completes the proof of part $(iii)$.
\par
    Next, we prove the conclusion of part $(iv)$. Since the admissible control set $\mathcal{U}_{M_\varepsilon}^\varepsilon$ is a bounded set in $L^\infty(\mathbb{R}^+;L^2(\Omega))$ (note that $M_\varepsilon\to M$ as $\varepsilon\to 0^+$), we arbitrarily take a sequence $\{\varepsilon_n\}_{n\in\mathbb{N}}\subset\{\varepsilon\}_{\varepsilon\in(0,\varepsilon_0]}$ such that $\varepsilon_n\to 0^+$ as $n\to\infty$, there exists a subsequence of $\{\varepsilon_n\}_{n\in\mathbb{N}}$, still denoted in the same way,  and $\tilde{u}\in L^\infty(0,T^*;L^2(\Omega))$ such that
\begin{equation}\label{yu-12-24-1}
    u^{*,2}_{\varepsilon_n}\to\tilde{u}\;\;\mbox{weakly star in}\;\;L^\infty(0,T^*;L^2(\Omega))\;\;\mbox{as}\;\;n\to\infty.
\end{equation}
    It follows from Ascoli's theorem and Aubin's theorem that there exists a subsequence of $\{\varepsilon_n\}_{n\in\mathbb{N}}$, still denoted in the way, such that
$$
    \|y(T^*;u^{*,2}_{\varepsilon_n},y_0)-y(T^*;\tilde{u},y_0)\|_\Omega\to 0\;\;as\;\;n\to\infty.
$$
    Because of
$$
    y(T^*;u^{*,2}_{\varepsilon_n},y_0)=y(T^{*,2}_{\varepsilon_n};u^{*,2}_{\varepsilon_n},y_0)
    \in\overline{B_K(0)},
$$
    we have
$$
    y(T^*;\tilde{u},y_0)\in \overline{B_K(0)}.
$$
    But
$$
    \|u^{*,2}_\varepsilon\|_{L^\infty(0,T^*;L^2(\Omega))}=M_\varepsilon\to M\;\;\mbox{as}\;\;\varepsilon\to 0^+.
$$
    Hence, from the weakly star lower semi-continuity of $L^\infty$-norm, we have
$$
    \|\tilde{u}(t)\|_\Omega\leq M\;\;\mbox{a.e.}\;\; t\in[0,T^*)
$$
    and $\tilde{u}$ is an optimal control of the problem $(TP)$.
    By the uniqueness of optimal control to problem $(TP)$ (see Proposition \ref{lemmayu3.1}), we have
$$
    \tilde{u}\equiv u^*\;\;\mbox{in}\;\; [0,T^*).
$$
    Since $\{\varepsilon_n\}_{n\in\mathbb{N}}$ was arbitrarily taken from in $\{\varepsilon\}_{\varepsilon\in(0,\varepsilon_0]}$, we have
\begin{equation}\nonumber
    u^{*,2}_{\varepsilon}\to u^*\;\;\mbox{weakly star in}\;\;L^\infty(0,T^*;L^2(\Omega))\;\;\mbox{as}\;\;\varepsilon\to0^+.
\end{equation}
    Therefore, similar to the proof of $(ii)$ in Theorem \ref{ithyu1.1} and by the results of parts $(i)$ and $(iii)$
    in this theorem, we can deduce the result of part $(iv)$.

\par
    Finally, we show the part $(v)$. Given a fixed $\eta\in(0,T^*)$. By the formula of $u^*$ (see (\ref{yu0.01})) and the result of part $(ii)$, for each $t\in[0,T^*-\eta]$, we have
\begin{eqnarray}\label{yu-b-4.24}
    &\;&\|u^{*,2}_\varepsilon(t)-u^*(t)\|_\Omega\nonumber\\
    &=&\left\|M_\varepsilon\frac{\chi_\omega\varphi^\varepsilon
    (t;\hat{\varphi}_{T^*}^\varepsilon,
    T^*)}{\|\varphi^\varepsilon(t;\hat{\varphi}_{T^*}^\varepsilon,
    T^*)\|_\omega}-M\frac{\chi_\omega\varphi(t;\hat{\varphi}_{T^*},T^*)}
    {\|\varphi(t;\hat{\varphi}_{T^*},T^*)\|_\omega}\right\|_\Omega\nonumber\\
    &\leq&|M_\varepsilon-M|+M\left\|\frac{\chi_\omega\varphi^\varepsilon
    (t;\hat{\varphi}_{T^*}^\varepsilon,
    T^*)}{\|\varphi^\varepsilon(t;\hat{\varphi}_{T^*}^\varepsilon,
    T^*)\|_\omega}-\frac{\chi_\omega\varphi(t;\hat{\varphi}_{T^*},T^*)}
    {\|\varphi(t;\hat{\varphi}_{T^*},T^*)\|_\omega}\right\|_\Omega.
\end{eqnarray}
    Meanwhile, by the result of Corollary \ref{corollaryyu1.1-1}, we have
$$
    \hat{\varphi}_{T^*}^\varepsilon\to \hat{\varphi}_{T^*}\;\;\mbox{strongly in}\;\; L^2(\Omega)\;\;\mbox{as}\;\; \varepsilon\to 0^+.
$$
    Hence, by using the similar method as that used in the proof of part $(iii)$ of Theorem \ref{ithyu1.1}, we can get
\begin{equation}\label{yu-b-4.25}
    \sup_{t\in[0,T^*-\eta]}\left\|\frac{\chi_\omega\varphi^\varepsilon
    (t;\hat{\varphi}_{T^*}^\varepsilon,
    T^*)}{\|\varphi^\varepsilon(t;\hat{\varphi}_{T^*}^\varepsilon,
    T^*)\|_\omega}-\frac{\chi_\omega\varphi(t;\hat{\varphi}_{T^*},T^*)}
    {\|\varphi(t;\hat{\varphi}_{T^*},T^*)\|_\omega}\right\|_\Omega\to 0\;\;\mbox{as}\;\;\varepsilon\to 0^+.
\end{equation}
    It follows from the result of part $(iii)$ (i.e., $M_\varepsilon\to M$ as $\varepsilon\to 0^+$) that
\begin{equation}
    \sup_{t\in[0,T^*-\eta]}\|u^{*,2}_\varepsilon(t)-u^*(t)\|_\Omega\to 0\;\;\mbox{as}\;\;
    \varepsilon\to 0^+.
\end{equation}
    The proof is completed.
\end{proof}
\section{Further comments}
\begin{enumerate}
\item
   When the target set is the origin of the state space $L^2(\Omega)$ instead of the closed ball $\overline{B_K(0)}$ in our study, it is extremely  hard for us to
   show the same results obtained in this paper. The reason is as follows: In the case that the target set is $\{0\}$ in $L^2(\Omega)$, the corresponding functional $J^{T_\varepsilon}_\varepsilon$ reads as:
   $$
   J^{T_\varepsilon}_\varepsilon(\varphi^\varepsilon_{T_\varepsilon})=\frac{1}{2}\left(\int_0^{T_\varepsilon}\|\varphi^\varepsilon
   (t;\varphi^\varepsilon_{T_\varepsilon},T_\varepsilon)\|_\omega dt\right)^2+\langle y_0,\varphi^\varepsilon(0;\varphi^\varepsilon_{T_\varepsilon},T_\varepsilon)\rangle_\Omega,\;\;\varphi^\varepsilon_{T_\varepsilon}\in L^2(\Omega),
   $$
   where $\varphi^\varepsilon(\cdot;\varphi^\varepsilon_{T_\varepsilon},T_\varepsilon)$ is the solution to equation (\ref{104-1}) with the initial time $T_\varepsilon>0$ and the initial data $\varphi^\varepsilon_{T_\varepsilon}\in L^2(\Omega)$. It has no minimizer in $L^2(\Omega)$ (at least, we do not know how to show it). This functional has a unique minimizer $\hat{\varphi}^\varepsilon_{T_\varepsilon}$ in a space $X_{T_\varepsilon}^\varepsilon$ which is closure of $L^2(\Omega)$ in a suitable norm (see Section 3 in \cite{b4}). Since  $X_{T_\varepsilon}^\varepsilon$  may be different for different $\varepsilon$, we do not know in which space  $\{\hat{\varphi}^\varepsilon_{T_\varepsilon}\}_{\varepsilon>0}$ stay and are bounded (see the proof of Theorem \ref{theoremyu1.1}).

\item
It should be interesting to improve  the convergence of the optimal control in part $(iii)$ of Theorem 1.1, more precisely, to derive
$$
u^{*,1}_\varepsilon\rightarrow u^*\;\;\mbox{in}\;\;L^\infty(0,T^*; L^2(\Omega))\;\;\mbox{as}\;\;\varepsilon\to 0.
$$
The same can be said about the convergence in part $(v)$ of Theorem 1.2. Unfortunately, by our method, we cannot get the above convergence. The reason is  as follows: We do not know if  it holds that
     $\chi_\omega\hat{\varphi}_{T^*}\neq 0$ in $\Omega$ (see the proof of (\ref{yu4.44})).
\item
    It is natural to ask if the main theorems still hold for the heat equations with space-time potentials. Indeed, after carefully checking the proofs of main results in this paper,
    we observe that Theorem 1.1 and Theorem 1.2 hold when the controlled system has the following properties:
\vskip 5pt
   \noindent $(a)$ The energy decay property of the solution to the controlled equation with the null control;
\vskip 5pt
   \noindent $(b)$ The explicit observability estimate, i.e., for each $T>0$, it holds that
  \begin{eqnarray}\nonumber
    &\;&\|\varphi(0;\varphi_T,
    T)\|_\Omega\leq
    \exp\biggl[C_0\bigg(1+\frac{1}{T}+T+
    (T^{\frac{1}{2}}+T)
    \|a\|_{L^\infty(\Omega\times(0,T))}\nonumber\\
    &\;&~~~~~~~+\|a\|_{L^\infty(\Omega\times(0,T))}^{\frac{2}{3}}\biggl)\biggl]
    \int_0^{T}\|\varphi
    (t;\varphi_T,T)
    \|_{\omega}dt,
\end{eqnarray}
  for all solutions $\varphi(\cdot;\varphi_T,T)$ to the adjoint equation:
\begin{equation}\label{yu-5-1}
\begin{cases}
    \varphi_t+\triangle\varphi
    +a \varphi=0
    &\mbox{in}\;\;\Omega\times(0,T),\\
    \varphi=0&\mbox{on}\;\;
    \partial\Omega\times(0,T),\\
    \varphi(T)
    =\varphi_{T}\in L^2(\Omega),
\end{cases}
\end{equation}
    where $a\in L^\infty(\Omega\times\mathbb{R}^+)$ and $C_0$ is a positive constant  depending only on
    $\Omega$ and $\omega$.

\vskip 5pt
  \noindent $(c)$ The unique continuation property at one time, i.e., if there exists a $t\in[0,T)$ such that $\|\varphi(t;\varphi_T,T)\|_\omega=0$, then
  $\|\varphi(\cdot;\varphi_T,T)\|_\Omega\equiv 0$ in $[0,T]$. Here $\varphi(\cdot;\varphi_T,T)$ is the solution to equation (\ref{yu-5-1}) and $T>0$.

\vskip 5pt
  \noindent $(d)$ The bang-bang property, i.e., the optimal controls to $(TP)$, $(TP_1^\varepsilon)$ and $(TP_2^\varepsilon)$ satisfy
$$
    \|u^*(t)\|=M,\;\;\mbox{for any}\;\;t\in[0,T^*),\;\;\;\;\|u^{*,1}_\varepsilon(t)\|=M,\;\;\mbox{for any}\;\;t\in[0,T^{*,1}_\varepsilon),
$$
    and
$$
    \|u^{*,2}_\varepsilon(t)\|=M_\varepsilon,\;\;\mbox{for any}\;\;t\in[0,T^{*,2}_\varepsilon).
$$
\vskip 5pt
  \noindent $(e)$ The equivalence of time and norm optimal control problems.
\vskip 5pt
  \noindent $(f)$ The explicit expression of the optimal control to time optimal control, i.e.,
$$
    u^*(t)=M\frac{\chi_\omega\varphi(t;\hat{\varphi}_{T^*},T^*)}
    {\|\varphi(t;\hat{\varphi}_{T^*},T^*)\|_\omega}\;\;\mbox{for any}\;\;t\in[0,T^*),
$$
    where $\hat{\varphi}_{T^*}$ is the minimizer of $J^{T^*}$ defined by (\ref{101}) and $\varphi(\cdot;\hat{\varphi}_{T^*},T^*)$ is the solution to equation (\ref{yu-5-1}) with the initial time $T^*>0$ and the initial data $\hat{\varphi}_{T^*}\in L^2(\Omega)$. The same can be said about the optimal controls $u^{*,1}_\varepsilon$ and $u^{*,2}_\varepsilon$.
  \vskip 5 pt
   Now we consider $(TP)$, $(TP^\varepsilon_1)$ and $(TP^\varepsilon_2)$ corresponding to equation (\ref{equationyu2.01}) and (\ref{equationyu2.02}), where $a =a(x,t)\in L^\infty(\Omega\times\mathbb{R}^+)$ and $a_\varepsilon=a_\varepsilon(x,t)\in L^\infty(\Omega\times\mathbb{R}^+)$ satisfy
   \vskip 5pt

   \noindent  $(H_1')$   $\|a_\varepsilon-a\|_{L^\infty(\Omega\times\mathbb{R}^+)}\to
    0$ as $\varepsilon\to 0^+$.

 \vskip 5pt

    \noindent  $(H_3')$  Either  $\|a\|_{L^\infty(\Omega\times\mathbb{R}^+)}<\lambda_1$
    or $a(x,t)\leq0$ for any
    $(x,t)\in\Omega\times\mathbb{R}^+$,
    where $\lambda_1>0$ is the first eigenvalue to the operator
    $-\triangle$ with the domain  $D(\triangle)=H_0^1(\Omega)\cap H^2(\Omega)$.

    \vskip 5pt

    The condition $(a)$ is implied by the assumption $(H_3')$; The condition $(b)$ is given by Proposition 3.2 in \cite{b1}; The condition $(c)$ is given by \cite{b31} (see also \cite{b30}); The condition $(d)$  can be derived from the Pontryagin maximum principle (see \cite{b10} or Theorem 4.1 of Chapter 7 in \cite{b32}) and the unique continuation property $(c)$; The condition $(e)$ can be derived from the above conditions $(a)$-$(d)$, via the almost same way in \cite{b4}; The condition $(f)$ follows from Conditions $(c)$ and $(e)$. Hence, Theorem 1.1 and Theorem 1.2 still hold when $a$ and $a_\varepsilon$ are space-time dependent and hold $(H_1')$, $(H_2)$ and $(H_3')$.

\end{enumerate}
\vskip 8pt
\par
    \textbf{Acknowledgment.} The author would like to thank Professor Gengsheng Wang deeply for the encouragement and suggestions. Also, the author gratefully
    acknowledges the anonymous referees for the suggestions which led to this improved version.

\end{document}